\documentclass[11pt,reqno]{amsart}

\usepackage[margin=1.1in]{geometry}

\usepackage{amssymb,cite}
\usepackage[mathscr]{eucal}
\usepackage{eufrak}
\usepackage{xspace}
\usepackage{mathtools}
\usepackage{color}

\usepackage{hyperref}
\hypersetup{hidelinks}

\usepackage[us,12hr]{datetime}    
 

\renewcommand{\div}{\text{\rm div}\,}
\newcommand{\ve}{\varepsilon}
\newcommand{\medint}{{\mbox{\vrule height3.5pt depth-2.8pt
          width4pt}\mkern-13mu\int\nolimits}}
\newcommand{\Medint}{\mkern12mu\mbox{\vrule height4pt
         depth-3.2pt
          width5pt}\mkern-16.5mu\int\nolimits}
\newcommand{\curl}{\text{\rm curl}\,}


%
%
%

\theoremstyle{plain}
\newtheorem{theorem}{Theorem}[section]

\newtheorem{lemma}{Lemma}[section]
\newtheorem{proposition}{Proposition}[section]
\theoremstyle{definition}

\theoremstyle{remark}

\newtheorem{remark}{Remark}[section]
\numberwithin{equation}{section}

\begin{document}

\title[Weak solutions for the compressible PNPNS system]{Global weak solutions for the compressible Poisson-Nernst-Planck-Navier-Stokes System}


\author{Daniel Marroquin}
\address{Instituto de Matem\'{a}tica - Universidade Federal do Rio de Janeiro\\
Av. Athos da Silveira Ramos, 149, Cidade Universit\'{a}ria, Rio de Janeiro, RJ, 21945-970, Brazil}
\curraddr{}
\email{marroquin@im.ufrj.br}

\author{Dehua Wang} 
\address{Department of Mathematics, University of Pittsburgh, Pittsburgh, PA, 15260, USA}
\curraddr{}
\email{dwang@math.pitt.edu}
\thanks{}

\subjclass[2020]{35Q35; 35Q92; 76W05; 35D30}

\keywords{Poisson-Nernst-Planck-Navier-Stokes, compressible flows, weak solutions, weak sequential stability, large-time behavior, incompressible limit.}


\date{\today}
 

\dedicatory{}

\begin{abstract}

We consider the compressible Poisson-Nernst-Planck-Navier-Stokes (PNPNS) system of equations, governing the transport of charged particles under the influence of the self-consistent electrostatic potential, in a three-dimensional bounded domain. We prove the existence of global weak solutions for the initial-boundary value problem with no-slip boundary condition for the fluid's velocity, blocking boundary condition for the ionic concentrations and inhomogeneous Robin boundary condition for the electrostatic potential, without restrictions on the size of the initial data. We derive the crucial energy dissipation of the system and prove the weak sequential stability of solutions of the Poisson-Nernst-Planck subsystem with respect to the velocity field of the fluid, which enables the proof of the existence of global weak solutions for the PNPNS system. We also study the large-time behavior of the solutions and justify the incompressible limit of the compressible PNPNS system as applications of the weak sequential stability of the solutions. New techniques and estimates are developed to overcome the difficulties from the strong interaction of the fluid with the ion particles and the physical boundary conditions.
\end{abstract}

\maketitle

\tableofcontents

\section{Introduction}\label{S1}

We are concerned with the compressible Poisson-Nernst-Planck-Navier-Stokes (PNPNS) system of equations, which models the transport of charged particles under the influence of the self-consistent electrostatic potential in a compressible fluid. 
The PNPNS system in the three-dimensional space is as follows: 
\begin{align}
&\partial_t\rho+\div(\rho u)=0,\label{e1.1}\\
&\partial_t(\rho u) + \div(\rho u\otimes u)+\nabla p = \div\mathbb{S}-\sum_{j=1}^n\nabla\varphi_j(c_j)+\kappa\Delta\Phi\nabla\Phi,\label{e1.2}\\
&\partial_t c_{j}+\div(c_ju)=\div\left( D_j\nabla\varphi_j(c_j)+D_jz_jc_j\nabla\Phi \right),\quad j=1,...,N,\label{e1.3}\\
&-\kappa \Delta\Phi=\sum_{j=1}^Nz_jc_j,\label{e1.5}
\end{align}  
where $\rho$, $u$ and $p=p(\rho)$ denote the fluid's density, velocity field and pressure; $\mathbb{S}$ denotes the viscous stress tensor given by
\begin{equation}\label{e1.6}
\mathbb{S}=\lambda(\div u)I + \mu(\nabla u + (\nabla u)^\perp),
\end{equation}
where the constants $\lambda$ and $\mu$ are the viscosity coefficients satisfying
\[
\mu >0,\quad \lambda + \frac{2}{3}\mu \ge 0.
\]
Regarding the pressure, we assume the following constitutive relation
\begin{equation}\label{e1.7}
p(\rho)=a\rho^\gamma,
\end{equation}
where $a>0$ and $\gamma>\frac{3}{2}$.
Moreover, the nonnegative functions $c_j$ are the ion concentrations and $\Phi$ is the electrostatic potential. Also, $\kappa$ is the dielectric permittivity, $D_j$ are positive diffusion coefficients and $z_j\in\mathbb{R}$ is the (constant) charge of the $j$-th ion species. Finally, the functions $\varphi_j(c_j)$ are strictly increasing functions defined in terms of the entropy densities $\sigma_j(c_j)$ by relations \eqref{e.entropyv}. 

The evolution of the ion concentrations is described by the Poisson-Nernst-Planck equations \eqref{e1.3}-\eqref{e1.5}. The mixture of the ion species give rise to a self-consistent electrostatic potential $\Phi$, given by the Poisson equation \eqref{e1.5}. Accordingly, $-\nabla\Phi$ is the electric field associated to the charge distribution $\sum_{j=1}^Nz_jc_j$ induced by the mixture. Equation \eqref{e1.3} models the mass balance of each ion species. The mass flux is decomposed into a diffusion term, $D_i\nabla\varphi_j(c_j)$, and an electromigration term, $D_jz_j  c_j\nabla\Phi$. In the case of Fickian diffusion, we have $\varphi_j(c_j)=c_j$, according to Fick's law of diffusion, which states that the flux of the ions should go from regions of high concentration to regions of low concentration, that is, in a direction proportional (and opposite) to the gradient of the concentration. Equation \eqref{e1.3} also contains a convective term $\div(c_j u)$, due to the fluid's movement. 
The evolution of the ion concentrations also exert forcing on the fluid, as accounted by the terms $-\sum\nabla\varphi_j(c_j)$ and $\kappa\Delta\Phi\nabla\Phi$ in the momentum equation \eqref{e1.2}, where the former is associated to the diffusion of ion particles and the latter corresponds to the Coulomb force produced by the charge distribution.
We refer to \cite{Ru,CI1,WLT1} and the references therein for an introduction to the physical and mathematical issues regarding the model.

We take the Fickian diffusion $\varphi_j(c_j)=c_j$ and consider the initial-boundary value problem of equations \eqref{e1.1}-\eqref{e1.5}   in a smooth bounded spatial domain $\Omega\subseteq \mathbb{R}^3$, subject to the following initial and boundary conditions:
\begin{equation}\label{e1.9}
(\rho,\rho u,c_j)(0,x)=(\rho_0,m_0,c_{j}^0)(x),\quad x\in\Omega;
\end{equation}
and for $t>0$,
\begin{align}
&u|_{\partial \Omega}=0,\label{e1.10}\\
&D_j\left(\partial_\nu c_j + z_jc_j\partial_\nu\Phi\right)|_{\partial \Omega} = 0,\quad j=1,...,N,\label{e1.11}\\
&(\partial_\nu\Phi + \tau\Phi)|_{\partial \Omega}=V,\label{e1.13}
\end{align}
where $\nu$ is the outer normal vector to the boundary $\partial\Omega$ of the domain, $\partial_\nu$ is the normal derivative at the boundary, $\tau>0$ is the (constant) capacity of the boundary and $V$ is a given (smooth) datum connected with an external electrical field.
Condition \eqref{e1.10} is a no-slip boundary condition for the velocity field of the fluid. In turn, the blocking boundary conditions \eqref{e1.11} model impermeable walls and yield the conservation of the averages of concentrations (cf. \cite{CI1}). Moreover, the Robin boundary condition \eqref{e1.13} accounts for electrochemical double layers at the boundary, which, in general, is expected to be charged (cf. \cite{BFS,FS}).
Our goal is to establish the existence, large-time behavior, and the incompressible limit of global finite energy weak solutions in the sense of Lions-Feireisl \cite{L2,F} to the above initial-boundary value problem \eqref{e1.1}-\eqref{e1.13}.

In the case of an incompressible fluid, the fluid's density is constant (scaled to be equal to $1$), the continuity equation \eqref{e1.1} reduces to the incompressibility condition $\div u=0$ and the constitutive relation for the pressure \eqref{e1.7} is dropped. Usually, in the incompressible setting, the resulting equations considered in the literature do not include the term $-\sum\nabla\varphi_j(c_j)$ in the momentum equation. However, being a gradient term, it can be incorporated into the pressure and the model is, thus, consistent with the above formulation. In the compressible case, this term is very important as, without it, the energy of the system is unbalanced. This has been observed in \cite{WLT1,WLT2}, where system \eqref{e1.1}-\eqref{e1.5}, for the case of two ionic species, has been derived by an energetic variational approach.

There is a lot of literature on the incompressible version of the PNPNS equations. The local existence of solutions in the whole space was proved by Jerome in \cite{Je}. Global existence of solutions for small data was obtained by Ryham \cite{Ry}. The Cauchy problem in dimension $2$ was considered by Zhang and Yin in \cite{ZhJ},  and in higher dimensions by Ma in \cite{Ma} and by Liu and Wang \cite{LW} (see also the works by Zhao, 
Deng and Cui \cite{ZDC1,ZDC2}). In bounded domains, global weak solutions have been shown to exist by Jerome and Sacco in \cite{JeS} and by Schmuck in \cite{Sch} with blocking boundary conditions for the ion concentrations and Neumann boundary conditions for the potential. In \cite{FaG} Fan and Gao prove the uniqueness of weak solutions in critical spaces. The existence of global weak solutions with blocking boundary conditions for the ion concentrations and Robin boundary conditions for the potential, which is more physically relevant than the Neumann one, was proved by Fischer and Saal in \cite{FS}. The global existence and stability of strong solutions for the two-dimensional system with blocking boundary conditions for the ions and Robin boundary conditions for the potential has been proved by Bothe, Fischer and Saal in \cite{BFS}. More recently, some other physically meaningful boundary conditions, namely blocking or Dirichlet boundary conditions for the ions and Dirichlet boundary conditions for the potential, were considered in a series of works by Constantin, Ignatova and Lee \cite{CI1,CIL2,CIL3,CIL4,Le} where existence, stability, long time behavior and regularity of global strong solutions is investigated in dimensions $2$ and $3$. Further regularity and long-time behaviour results have been investigated for periodic solutions by Abdo and Ignatova in \cite{AI1,AI2,AI3}. The quasi-neutral limit of the solutions has been studied by Li \cite{FCLi}, by Wang, Jiang and Liu \cite{WJL} and by Constantin, Ignatova and Lee \cite{CIL1}. Furthermore, the case of an inviscid fluid has been considered by Ignatova and Shu in \cite{ISh}.

Regarding the compressible case of the system, the literature is more limited, mostly focusing on smooth solutions in the whole space, which are either local or small; 
see the works in \cite{WLT1,WLT2,ToTa,SW}. 
%
In this paper we investigate the 
global weak solutions to the initial-boundary value problem of the compressible PNPNS system with no-slip boundary condition for the velocity, blocking boundary conditions for the ion concentrations and Robin boundary conditions for the electrostatic potential, as described above, without restrictions on the size of the initial data, nor on the number of ionic species. 

The analysis of system \eqref{e1.1}-\eqref{e1.5} relies on its dissipative structure. It is expected that the energy of the system dissipates as time evolves. Mathematically, the energy dissipation gives rise to the natural {\it a priori} estimates for the solutions and yield the natural function spaces that provide a consistent notion of solution. The energy dissipation is sensitive with respect to the boundary conditions for the ion concentrations and the electrostatic potential. More physically meaningful boundary conditions usually lead to mathematical challenges due to the loss of conservation of certain quantities, which makes the analysis more intricate.



\subsection{Main results}

As mentioned above, we focus on the case of Fickian diffusion where $\varphi_j(c_j)=c_j$, $j=1,...,N$. Also, for simplicity, we consider the case where the diffusivities $D_j$ are all positive constants; although the results below hold also for the case where the diffusivities depend on $x$, with minor and straightforward modifications, as long as they are strictly positive and bounded.  
%
Then, we consider problem \eqref{e1.1}-\eqref{e1.13} posed on a smooth bounded spatial domain $\Omega\subseteq \mathbb{R}^3$ and assume that the initial data satisfy the following conditions:
\begin{equation}\label{e1.initial}
\begin{cases}
\displaystyle \rho_0\in L^\gamma(\Omega),\, \rho_0\ge 0,\\
\displaystyle m_0\in L^1(\Omega), \text{ with $m_0(x)=0$ if $\rho_0(x)=0$, } \frac{|m_0|^2}{\rho_0}\in L^1(\Omega),\\
\displaystyle c_j^0\ge 0,\,  c_j^0\in L^1(\Omega), \, c_j\ln c_j  \in L^1(\Omega).
\end{cases}
\end{equation}
We say that $(\rho, u, c_1,...,c_N, \Phi)(t,x)$ is a {\em finite energy weak solution} of \eqref{e1.1}-\eqref{e1.13} for $t\in[0,T]$ with $T>0$ a given constant and $x\in\Omega$  if
\begin{itemize}
\item The density $\rho$ is nonnegative and 
\[
\rho\in C([0,T];L^1(\Omega))\cap L^\infty(0,T; L^\gamma(\Omega)),\, \rho(0,\cdot)=\rho_0;
\]
\item The velocity field $u$ satisfies
\[
\begin{split}
u\in L^2(0,T;H_0^1(\Omega)), \, \rho u\otimes u\in L^1((0,T)\times\Omega),\\
\rho u(0,\cdot) = m_0\, \text{ in the sense of distributions};
\end{split}
\]
\item The ion densities $c_j$, $j=1,...,N$, are nonnegative and
\[
c_j\in L^\infty(0,T;L^1(\Omega))\cap L^1(0,T;W^{1,\frac{3}{2}}(\Omega)),
\]
with $\sqrt{c_j}\in L^2(0,T;H^1(\Omega))$, $j=1,...,N$;
\item The electrostatic potential $\Phi$ satisfies $\Phi(t,x)=\Phi_1(x)+\Phi_2(t,x)$ with $\Phi_1\in W^{2,r}(\Omega)$ for some $r>3$ and
\[
\Phi_2\in L^\infty(0,T;H^1(\Omega))\cap L^1(0,T,W^{3,\frac{3}{2}}(\Omega))\cap C([0,T];L^p(\Omega)),\text{ for $p\in [1,6)$};
\]
\item The continuity and the momentum equations \eqref{e1.1} and \eqref{e1.2} are satisfied in the sense of distributions (with test functions which do not necessarily vanish at the boundary of $\Omega$, in accordance with \eqref{e1.10});
\item The continuity equation \eqref{e1.1} is satisfied in the sense of renormalized solutions, that is, the following equation 
\begin{equation}\label{e1.renorm}
b(\rho)_t + \div(b(\rho u) + \Big(b'(\rho)\rho - b(\rho)\Big)\div u = 0,
\end{equation}
is satisfied in the sense of distributions, for any $b\in C^1([0,\infty))$ such that 
\begin{equation}
b'(z) = 0 \text{ for all $z$ large enough, say, $z\ge z_0$}, 
\end{equation}
for some constant $z_0$ which depends on $b$;
\item The equation \eqref{e1.3} with \eqref{e1.11} and the initial condition $c_j(0,\cdot)=c_j^0$ is satisfied in the sense of distributions with test functions which do not necessarily vanish at the boundary of $\Omega$ or at $t=0$;
\item The function $\Phi$ is a strong solution of \eqref{e1.5}, and \eqref{e1.13} is satisfied in the sense of traces;
\item The solution satisfies the following energy inequality:
\begin{equation}\label{e1.energy}
E(t) + \int_0^t\int_\Omega \left(\mu|\nabla u|^2 + (\lambda+\mu)(\div u)^2 + \sum_{j=1}^ND_jc_j\Big|\frac{\nabla c_j}{c_j}+ z_j\nabla\Phi\Big|^2\right)dx\, ds
\le E(0),
\end{equation}
 where,
\begin{equation*}
E(t):=\int_\Omega\left( \rho\left(\frac{1}{2} |u|^2 + \frac{a}{\gamma-1}\rho^{\gamma-1} \right)  
+ \sum_{j=1}^N(c_j\ln c_j - c_j + 1) + \frac{\kappa}{2}|\nabla\Phi|^2\right) dx +\frac{\kappa\tau}{2}\int_{\partial\Omega}|\Phi|^2dS.
\end{equation*}
\end{itemize}

\begin{remark}
Note that the electrostatic potential $\Phi$ may always be decomposed as $\Phi(t,x)=\Phi_1(x)+\Phi_2(t,x)$, where $\Phi_1$ is the unique solution of
\[
\begin{cases}
\displaystyle -\kappa\Delta \Phi_1 = 0,&\text{in $\Omega$},\\
\displaystyle \partial_\nu \Phi_1 + \tau\Phi_1 = V,&\text{on $\partial\Omega$},
\end{cases}
\]
and $\Phi_2$ solves
\[
\begin{cases}
\displaystyle -\kappa\Delta \Phi_2 = \sum_{j=1}^Nz_jc_j,&\text{in $\Omega$},\\
\displaystyle \partial_\nu \Phi_1 + \tau\Phi_1 = 0,&\text{on $\partial\Omega$}.
\end{cases}
\]
\end{remark}

\begin{remark}\label{r1.2}
By the dominated convergence theorem, if $(\rho,u)$ is a renormalized solution of the continuity equation, then \eqref{e1.renorm} holds for any $b\in C^1(0,\infty)\cap C[0,\infty)$, such that
\begin{equation}
|b'(z)z|\le cz^{\frac{\gamma}{2}},\text{ for $z$ larger that some positive constant $z_0$.}
\end{equation}
\end{remark}

Our first  result is on the existence of global weak solutions and reads as follows.
{\it {\it a priori}}
\begin{theorem}[Existence of global weak solutions]\label{T1.1}
Let $\gamma>\frac{3}{2}$ and let the initial data satisfy \eqref{e1.initial}. Then, for any given $T>0$, there is a finite energy weak solution of \eqref{e1.1}-\eqref{e1.13}.
Moreover, there is a positive constant $C$, which depends only on $E(0)$, $V$, $\tau$ and $\kappa$, such that
\begin{multline}\label{e1.T_Phi}
D\int_{T_1}^{T_2}\int_\Omega \sum_{j=1}^N \left(\left|\nabla \sqrt{c_j}\right|^2+z_j^2c_j|\nabla \Phi|^2\right)dx\, dt+2\kappa D\int_{T_1}^{T_2}\int_\Omega|\Delta \Phi|^2 dx\, dt\\ 
\le E(0) + C(T_2-T_1)\sum_{j=1}^N|z_j|\,\| c_{j,0} \|_{L^1(\Omega)},
\end{multline}
for any $0\le T_1\le T_2$, where $\displaystyle D=\min_j D_j$.
\end{theorem}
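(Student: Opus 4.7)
My plan is to construct the weak solution through the Lions-Feireisl three-level approximation adapted to the Poisson-Nernst-Planck coupling, and then to extract the inequality \eqref{e1.T_Phi} directly from the chemical-potential dissipation. At the approximate level I would employ artificial viscosity $\varepsilon \Delta \rho$ in the continuity equation, artificial pressure $\delta \rho^\beta$ (with $\beta$ large) in the momentum equation, a Faedo-Galerkin truncation for $u$, and, given $u$, solve the PNP subsystem $(c_j,\Phi)$ by a parabolic fixed-point iteration with Stampacchia truncation of $c_j^-$ propagating the non-negativity. Testing the approximate momentum equation by $u$, the Nernst-Planck equations by the chemical potentials $\ln c_j + z_j \Phi$, and the Poisson equation by $\partial_t \Phi$ (with the splitting $\Phi = \Phi_1 + \Phi_2$ to handle the Robin datum), I would recover the energy identity \eqref{e1.energy} at the approximate stage. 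This produces the uniform bounds on $\sqrt{\rho}\, u$, $\rho$, $u$, $\sqrt{c_j}$ and $\nabla \Phi$ needed below, together with the mass conservation $\|c_j(t)\|_{L^1(\Omega)} = \|c_{j,0}\|_{L^1(\Omega)}$ arising from the blocking condition \eqref{e1.11} and the no-slip condition on $u$.

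\textbf{Compactness and passage to the limit.} At each of the three limits (Galerkin $n \to \infty$, $\varepsilon \to 0$, $\delta \to 0$) the main difficulty is the strong convergence of $\rho$. I would follow Lions' effective-viscous-flux argument, testing the momentum equation against $\nabla \Delta^{-1}(\rho - \bar\rho)$ and checking that the extra forcing $-\sum \nabla c_j + \kappa \Delta\Phi\nabla\Phi$ produces commutator remainders that vanish in the weak limit; this uses $c_j \in L^1 W^{1,3/2}$ and $\Phi \in L^1 W^{3,3/2}$, which follow from elliptic regularity for the Poisson-Robin problem applied to the PNP bounds. The DiPerna-Lions commutator lemma then gives renormalization of \eqref{e1.1}, and Feireisl's oscillations-defect argument yields strong compactness of $\rho$. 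For the PNP subsystem itself, weak stability with respect to a given velocity sequence is established by combining the $L^2 H^1$ bound on $\sqrt{c_j}$ with time-equicontinuity read from the equation, which gives strong convergence of $c_j$ in $L^1((0,T) \times \Omega)$ via Aubin-Lions; elliptic regularity then promotes this to strong convergence of $\nabla \Phi$.

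\textbf{The estimate \eqref{e1.T_Phi}.} Set $D = \min_j D_j$ and drop the non-negative excess $\sum_j(D_j - D) c_j|\cdot|^2$ in the dissipation. Since the dissipation in \eqref{e1.energy} is non-negative, $\int_{T_1}^{T_2}(\text{dissipation}) \le E(0)$. Expanding the square, using $|\nabla c_j|^2/c_j = 4|\nabla\sqrt{c_j}|^2$ and $\sum_j z_j \nabla c_j = -\kappa \nabla \Delta \Phi$ from \eqref{e1.5}, and integrating the cross term by parts I would obtain
\begin{equation*}
D \int_{T_1}^{T_2}\!\!\int_\Omega \Bigl(4\sum_j |\nabla\sqrt{c_j}|^2 + \sum_j z_j^2 c_j |\nabla\Phi|^2\Bigr) dx\, dt + 2\kappa D \int_{T_1}^{T_2}\!\!\int_\Omega |\Delta\Phi|^2 dx\, dt + 2D \int_{T_1}^{T_2}\!\!\int_{\partial\Omega}\Bigl(\sum_j z_j c_j\Bigr)(V - \tau\Phi) dS\, dt \le E(0),
\end{equation*}
where the boundary integral arises from the Robin condition \eqref{e1.13} combined with the Poisson equation restricted to $\partial\Omega$. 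It remains to bound the last term by the claimed right-hand side. I would use the trace inequalities $\|c_j\|_{L^1(\partial\Omega)} \le C(\|c_j\|_{L^1(\Omega)} + \|\nabla c_j\|_{L^1(\Omega)})$ and, via $c_j = (\sqrt{c_j})^2$ and $H^{1/2}(\partial\Omega) \hookrightarrow L^4(\partial\Omega)$, $\|c_j\|_{L^2(\partial\Omega)} \le C(\|c_{j,0}\|_{L^1(\Omega)} + \|\nabla\sqrt{c_j}\|_{L^2(\Omega)}^2)$, together with $\|\nabla c_j\|_{L^1(\Omega)} \le 2\sqrt{\|c_{j,0}\|_{L^1(\Omega)}}\, \|\nabla\sqrt{c_j}\|_{L^2(\Omega)}$, mass conservation, and the energy bound on $\|\Phi\|_{L^2(\partial\Omega)}$. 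Young's inequality then absorbs the $\|\nabla\sqrt{c_j}\|_{L^2(\Omega)}^2$ remainder into the left-hand side, leaving exactly $C(T_2 - T_1)\sum_j |z_j|\|c_{j,0}\|_{L^1(\Omega)}$ on the right, as asserted.

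\textbf{Main obstacle.} The hardest step is the density compactness inside the effective-viscous-flux argument, since the new momentum forcing is a product of weakly convergent quantities; establishing the Lions identity in this context requires both the strongest available regularity for $c_j$ and $\Phi$ and the strong compactness of $c_j$ coming from the PNP weak stability, which therefore must be proved first. A secondary delicate point is the control of the boundary integral in \eqref{e1.T_Phi}, where any slack in the $L^2(\partial\Omega)$ trace interpolation for $c_j$ would break the linear-in-time dependence of the right-hand side.
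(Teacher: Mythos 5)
Your overall architecture matches the paper: the three-level regularization (artificial viscosity in the continuity equation, artificial pressure $\delta\rho^\beta$, Faedo--Galerkin truncation for the velocity) coupled with solving the PNP subsystem for a given velocity by a parabolic fixed point, the chemical-potential testing to recover the energy, the Aubin--Lions compactness for $\sqrt{c_j}$, and the effective-viscous-flux argument for the density. (The paper also smooths the Poisson equation through a fourth-order operator $(1-\vartheta\Delta^2)$ via an auxiliary $\Psi$, which you omit, and it obtains only weak-* convergence of $\nabla\Phi$ rather than the ``strong convergence of $\nabla\Phi$'' you assert, but these are minor.) The genuine gap is in your derivation of \eqref{e1.T_Phi}: the boundary absorption step does not close with the trace inequality you invoke.

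Concretely, you estimate the boundary term $\int_{\partial\Omega}\bigl(\sum_j z_j c_j\bigr)(V-\tau\Phi)\,dS$ by pairing $c_j\in L^2(\partial\Omega)$ with $\Phi\in L^2(\partial\Omega)$ and using
$\|c_j\|_{L^2(\partial\Omega)}\le C\bigl(\|c_{j,0}\|_{L^1(\Omega)}+\|\nabla\sqrt{c_j}\|_{L^2(\Omega)}^2\bigr)$,
which is the \emph{critical} trace embedding $H^1(\Omega)\hookrightarrow L^4(\partial\Omega)$ applied to $\sqrt{c_j}$ and squared. This produces a term of the form $C\,\|\Phi\|_{L^2(\partial\Omega)}\,\|\nabla\sqrt{c_j}\|_{L^2(\Omega)}^2$ with a \emph{fixed} trace constant $C$, multiplied by $\|\Phi\|_{L^\infty_t L^2(\partial\Omega)}$, which is only bounded by the energy $E(0)$ and is not small. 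Because the exponent on $\|\nabla\sqrt{c_j}\|_{L^2(\Omega)}^2$ is already $1$, Young's inequality gives you no free parameter to shrink this coefficient, so you cannot absorb it into the $D\int|\nabla\sqrt{c_j}|^2$ on the left. The paper avoids this by working at the \emph{subcritical} exponent: it bounds $\|c_j\|_{L^{4/3}(\partial\Omega)}=\|\sqrt{c_j}\|_{L^{8/3}(\partial\Omega)}^2$, pairs with $\Phi\in L^4(\partial\Omega)$ (controlled via the boundary-term Poincar\'e inequality), and then uses the \emph{compactness} of $H^1(\Omega)\hookrightarrow L^{8/3}(\partial\Omega)$ in an Ehrling-type inequality: for every $\varepsilon>0$ there is $C_\varepsilon$ with $\|\sqrt{c_j}\|_{L^{8/3}(\partial\Omega)}^2\le\varepsilon\|\nabla\sqrt{c_j}\|_{L^2(\Omega)}^2+C_\varepsilon\|\sqrt{c_j}\|_{L^2(\Omega)}^2$. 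Choosing $\varepsilon$ small relative to $\|\Phi\|_{L^\infty_t L^4(\partial\Omega)}$ (which depends only on $E(0),V,\tau,\kappa$) gives the absorption, and the $C_\varepsilon\|\sqrt{c_j}\|_{L^2(\Omega)}^2=C_\varepsilon\|c_{j,0}\|_{L^1(\Omega)}$ piece, integrated over $[T_1,T_2]$, yields precisely the linear-in-time right-hand side. Equivalently, a Gagliardo--Nirenberg trace interpolation through $H^{3/4}(\Omega)$ yields $\|\sqrt{c_j}\|_{L^{8/3}(\partial\Omega)}^2\lesssim\|\sqrt{c_j}\|_{L^2(\Omega)}^{1/2}\|\sqrt{c_j}\|_{H^1(\Omega)}^{3/2}$, whose $3/4$-power on the gradient then \emph{does} leave room for Young's inequality; but your stated $L^2(\partial\Omega)$ bound sits at criticality and has no such slack. (The paper also sidesteps the separate $V$-boundary contribution by first splitting $\Phi=\Phi_1+\Phi_2$, with $\Phi_1$ the harmonic lift of the Robin datum, and moving that part to the interior via $\nabla\Phi_1\in L^\infty$; your unsplit version forces you to handle $\int_{\partial\Omega}c_j V\,dS$ as well, which likewise needs the subcritical trace compactness, here of $H^1(\Omega)\hookrightarrow L^2(\partial\Omega)$.)
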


We prove the existence of global finite energy weak solutions as a limit of solutions of a regularized system of equations by combining the theory in \cite{L2,F} for the Navier-Stokes equations with several {\it {\it {\it a priori}}} estimates on system \eqref{e1.1}-\eqref{e1.5} and weak stability results regarding the Poisson-Nernst-Planck (PNP) subsystem \eqref{e1.3}-\eqref{e1.5}. The {\it a priori} estimates, in particular the energy inequality \eqref{e1.energy}, are a consequence of the underlying dissipative structure of the equations. The dissipative structure  is crucial in the analysis  and we dedicate a whole section (see Section~\ref{S:2} below) to deducing an energy equation for the PNPNS system. The resulting energy equation allows for the analysis of the dissipation of energy with respect to the different physically meaningful boundary conditions that may be imposed on the ion densities and on the electrostatic potential.

Our second result is on the weak sequential stability of the solutions to the PNP subsystem.  
\begin{theorem}[Weak sequential stability for the PNP subsystem]\label{T1.2}
Let	$\{ u_n\}_{n\in\mathbb{N}}$ be a sequence in $L^2(0,T;H_0^1(\Omega))$ and let $(c^{(n)},\Phi^{(n)})$ be a weak solution of the PNP sub-system \eqref{e1.3}-\eqref{e1.5}, \eqref{e1.11}-\eqref{e1.13}, with $u=u_n$. 
Suppose that
\begin{equation}\label{e1.est}
\begin{cases}
\sqrt{c_j^{(n)}} \text{ is bounded in $L^2(0,T;H^1(\Omega))$,}\\
\Phi^{(n)} \text{ is bounded in $L^\infty(0,T;H^1(\Omega))$,}\\
\sqrt{c_j^{(n)}}\nabla\Phi^{(n)} \text{ is bounded in $L^2((0,T)\times\Omega)$.}
\end{cases}
\end{equation}
Assume also that
\begin{equation}\label{e1.estu}
u_n\rightharpoonup u \text{ weakly in $L^2(0,T;H_0^1(\Omega))$.}
\end{equation}
Then, there are $c_j\in L^\infty(0,T;L^1(\Omega))\cap L^1(0,T; W^{1,3/2}(\Omega))$, $j=1,...,N$, $\Phi\in L^\infty(0,T;H^1(\Omega))\cap C([0,T];L^p(\Omega))$, for all $p\in [1,6)$, and a subsequence of $(c^{(n)},\Phi^{(n)})$ (not relabeled) such that
\begin{align*}
&c_j^{(n)}\to c_j  \text{ strongly in $L^1(0,T;L^p(\Omega))$ for $1\le p< 3$,}\\
&\nabla c_j^{(n)}\rightharpoonup \nabla c_j \text{ weakly in $L^2(0,T;L^1(\Omega))\cap L^1(0,T;L^q(\Omega))$, for $1\le q< 3/2$,}\\
&\nabla \Phi^{(n)}\rightharpoonup \nabla \Phi \text{ weakly-* in $L^\infty(0,T; L^2(\Omega))$,}\\
&\Phi^{(n)}\to \Phi \text{ strongly in $C([0,T];L^p(\Omega))$ for $1\le p < 6$.}\\
\end{align*}
Moreover, there are $r_1,r_2  > 1$ such that
\begin{align*}
&c_j^{(n)}\nabla\Phi^{(n)}\rightharpoonup c_j\nabla\Phi \text{ weakly in $L^{r_1}((0,T)\times\Omega)$,}\\
&c_j^{(n)}u_n\rightharpoonup c_j u \text{ weakly in $L^{r_2}((0,T)\times\Omega)$,}
\end{align*}
and the limit functions $u$, $(c_1,...,c_N,\Phi)$ are a weak solution of \eqref{e1.3}, \eqref{e1.5}, \eqref{e1.11}-\eqref{e1.13}.
\end{theorem}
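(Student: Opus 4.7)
The plan is to combine the uniform estimates in \eqref{e1.est} with additional \emph{a priori} bounds derived from the PNP equations themselves in order to set up an Aubin-Lions-type compactness argument that upgrades weak convergence of the $c_j^{(n)}$ to strong, enabling passage to the limit in the nonlinear coupling. First, I would sharpen the spatial estimates: the Sobolev embedding $H^1(\Omega)\hookrightarrow L^6(\Omega)$ applied to $\sqrt{c_j^{(n)}}$ yields $c_j^{(n)}$ bounded in $L^1(0,T;L^3(\Omega))$, and writing $\nabla c_j^{(n)}=2\sqrt{c_j^{(n)}}\nabla\sqrt{c_j^{(n)}}$ together with H\"older gives $\nabla c_j^{(n)}$ bounded in $L^1(0,T;L^{3/2}(\Omega))\cap L^2(0,T;L^1(\Omega))$; standard elliptic regularity for the Poisson-Robin problem \eqref{e1.5}, \eqref{e1.13} then lifts $\Phi^{(n)}$ to, e.g., $L^1(0,T;W^{2,3}(\Omega))$.

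Next, I would use the transport equation \eqref{e1.3}, together with the above bounds and the embedding $H_0^1(\Omega)\hookrightarrow L^6(\Omega)$ to handle the convective term $c_j^{(n)} u_n$, in order to bound $\partial_t c_j^{(n)}$ in $L^{r}(0,T;W^{-1,s}(\Omega))$ for some $r,s>1$. Combined with the spatial compactness coming from the bound in $L^1(0,T;W^{1,3/2}(\Omega))$, an Aubin-Lions-Simon argument delivers a subsequence with $c_j^{(n)}\to c_j$ strongly in $L^1(0,T;L^p(\Omega))$, $p<3$; nonnegativity and the claimed weak convergences of $\nabla c_j^{(n)}$ persist in the limit. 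For the potential, using the decomposition $\Phi^{(n)}=\Phi_1+\Phi_2^{(n)}$ with $\Phi_1$ fixed, the strong convergence of $c_j^{(n)}$ propagates through the Poisson-Robin problem for $\Phi_2^{(n)}$; time regularity of $\Phi_2^{(n)}$, inherited from that of $c_j^{(n)}$ via elliptic regularity, yields $\Phi^{(n)}\to\Phi$ in $C([0,T];L^p(\Omega))$ for $p<6$.

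To handle the drift term $c_j^{(n)}\nabla\Phi^{(n)}$ one needs strong convergence of $\nabla\Phi^{(n)}$. Testing the Poisson equation for the difference $\Phi^{(n)}-\Phi$ against itself and invoking the Robin boundary condition $\partial_\nu(\Phi^{(n)}-\Phi)=-\tau(\Phi^{(n)}-\Phi)$ yields the identity
\[
\|\nabla(\Phi^{(n)}-\Phi)\|_{L^2(\Omega)}^2 + \tau\|\Phi^{(n)}-\Phi\|_{L^2(\partial\Omega)}^2 = \frac{1}{\kappa}\int_\Omega (\Phi^{(n)}-\Phi)\sum_{j=1}^N z_j(c_j^{(n)}-c_j)\,dx,
\]
whose right-hand side vanishes by H\"older using the strong $L^p$-convergence of $c_j^{(n)}$ and the uniform $L^\infty(L^6)$ bound on $\Phi^{(n)}-\Phi$; integration in time then gives $\nabla\Phi^{(n)}\to\nabla\Phi$ strongly in $L^2((0,T)\times\Omega)$. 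With these convergences, the products $c_j^{(n)}\nabla\Phi^{(n)}$ (strong $\times$ strong) and $c_j^{(n)}u_n$ (strong convergence of $c_j^{(n)}$ in $L^1(0,T;L^p(\Omega))$ with $p$ close to $3$ against weak convergence of $u_n$ in $L^2(0,T;L^6(\Omega))$) converge weakly in some $L^{r}((0,T)\times\Omega)$ with $r>1$, so one can pass to the limit in the weak formulations of \eqref{e1.3}, \eqref{e1.5} and the boundary conditions \eqref{e1.11}, \eqref{e1.13}.

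The main obstacle is organizing the compactness argument in a self-consistent way: the time regularity of $c_j^{(n)}$ needed for Aubin-Lions must be compatible with the relatively weak $L^1$-in-time spatial bound $L^1(0,T;W^{1,3/2}(\Omega))$, which forces careful interpolation of the nonlinear products $c_j^{(n)} u_n$ and $c_j^{(n)}\nabla\Phi^{(n)}$ to produce matching $W^{-1,s}$ exponents. A second subtlety is that strong convergence of $\nabla\Phi^{(n)}$—without which the drift term $c_j\nabla\Phi$ would be a weak-weak limit that cannot be identified by soft compactness arguments—has to be extracted from the elliptic structure of Poisson's equation as above, rather than from parabolic-type PDE compactness.
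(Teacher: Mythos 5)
Your proposal is correct, and it follows a genuinely different route from the paper at its crucial compactness step. The paper's proof (carried out in Lemma~3.6) applies the Aubin--Lions--Simon lemma not to $c_j^{(n)}$ but to the auxiliary quantity $\sqrt{c_j^{(n)}+1}$: it computes $\partial_t\sqrt{c_j^{(n)}+1}$ explicitly from \eqref{e1.3}, shows that each resulting term lies in $L^1(0,T;H^{-1}(\Omega))$ or $L^1((0,T)\times\Omega)$, and uses the $L^2(0,T;H^1(\Omega))$ bound for $\sqrt{c_j^{(n)}+1}$ to obtain relative compactness in $L^2(0,T;L^p(\Omega))$, $p<6$, which then translates into the strong $L^1(0,T;L^p)$, $p<3$, convergence of $c_j^{(n)}$. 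You instead apply the Simon $L^1$-in-time version of Aubin--Lions directly to $c_j^{(n)}$, using the $L^1(0,T;W^{1,3/2}(\Omega))$ spatial bound together with $\partial_t c_j^{(n)}$ bounded in $L^r(0,T;W^{-1,s})$ for some $r,s>1$ (obtained by interpolating the conservation of mass $L^\infty(L^1)$ bound with the $L^1(L^3)$ bound, exactly as in the paper's estimates (3.48) and the subsequent bounds on $V_j^{(n)}$ and $c_j^{(n)}u_n$). This is more direct, at the cost of working with the borderline $L^1$-in-time exponent on the spatial side; the paper's $\sqrt{c_j^{(n)}+1}$ substitution buys a genuine $L^2$-in-time spatial bound and avoids any concern about a vanishing square root appearing in the denominator when differentiating. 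Both arguments are valid and lead to the same strong convergence; yours is leaner in bookkeeping but relies more heavily on the $L^1$-endpoint of Simon's lemma.

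One remark on your treatment of the electrostatic potential: you assert that strong convergence of $\nabla\Phi^{(n)}$ is \emph{needed} to identify the limit of the drift term $c_j^{(n)}\nabla\Phi^{(n)}$, because otherwise it would be a ``weak--weak'' limit. This is not accurate: after the strong $L^1(0,T;L^p)$ convergence of $c_j^{(n)}$ is in hand, the product $c_j^{(n)}\nabla\Phi^{(n)}$ is a strong--weak pairing (strong in $c_j^{(n)}$, weak-$*$ in $\nabla\Phi^{(n)}$ from the $L^\infty(0,T;L^2)$ bound), which suffices to pass to the limit, and this is exactly what the paper does. Your additional step---testing the Poisson equation for the difference $\Phi^{(n)}-\Phi$ against itself, using the homogeneous Robin condition on the difference, and invoking the strong $L^1(0,T;L^{6/5})$ convergence of $c_j^{(n)}$ against the uniform $L^\infty(0,T;L^6)$ bound on $\Phi^{(n)}-\Phi$---is nonetheless correct and yields $\nabla\Phi^{(n)}\to\nabla\Phi$ strongly in $L^2((0,T)\times\Omega)$, which is a (true but not required) strengthening. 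To run that argument cleanly you should first establish, from the weak-$*$ convergence of $\nabla\Phi^{(n)}$ and the strong convergence of $c_j^{(n)}$, that $\Phi$ solves the limit Poisson--Robin problem; only then is the difference a solution of the homogeneous problem, which is what makes the energy identity legitimate.
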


\begin{remark}
We remark that the weak sequential stability of the solutions to the PNP subsystem in Theorem \ref{T1.2} is very useful not only in the proof of Theorem \ref{T1.1} but also in showing the large-time behavior of weak solutions and the incompressible limits. 
\end{remark}

Our third  result concerns the large-time behavior of the finite energy weak solutions of the PNPNS system as an application of the weak sequential stability in Theorem \ref{T1.2}.

\begin{theorem}[Large-time behavior]\label{T1.ltb}
Let $(\rho, u, c_1,...,c_N,\Phi)$ be a finite energy weak solution of \eqref{e1.1}-\eqref{e1.13}. Then, there is a stationary state of density $\rho_s$, which is a positive constant, a stationary velocity field $u_s=0$ and a stationary state of ion densities $c_j^{(s)}$, $j=1,...,n$, with a corresponding stationary self-consistent potential $\Phi^{(s)}$, such that, as $t\to\infty$,
\begin{equation}
\begin{cases}
\rho(t)\to \rho_s \text{ strongly in $L^\gamma(\Omega)$},\\
u(t)\to 0 \text{ strongly in $L^2(\Omega)$},\\
c_j(t)\to c_j^{(s)} \text{ strongly in $L^1(\Omega)$},\\ 
\Phi(t)\to \Phi^{(s)} \text{ strongly in $H^1(\Omega)$}.
\end{cases}
\end{equation}
Moreover, the limiting ion densities are given by
\[
c_j^{(s)}(x)=I_j^{(0)}\frac{e^{-z_j\Phi^{(s)}(x)}}{\int_\Omega e^{-z_j\Phi^{(s)}(x)} dx},
\]
where the constants $I_j^{(0)}$ are the initial masses
\[
I_j^0=\int_\Omega c_j^{(0)}(x) dx,
\]
and the potential $\Phi^{(s)}$ is the unique solution to the following Poisson-Boltzmann equation:
\begin{equation}
-\kappa\Delta \Phi^{(s)}=\sum_{j=1}^Nz_j I_j^{(0)}\frac{e^{-z_j\Phi^{(s)}}}{\int_\Omega e^{-z_j\Phi^{(s)}}(x) dx},
\end{equation}
with
\begin{equation}
(\partial_\nu\Phi^{(s)}+\tau\Phi^{(s)})|_{\partial\Omega}=V.
\end{equation}
\end{theorem}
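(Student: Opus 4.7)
The plan is a time-shift compactness argument: for any sequence $t_n\to\infty$, I extract a limit of the shifted fields, identify it via the dissipation of energy and the conservation laws as the unique steady state described in the theorem, and conclude that the full trajectory converges. The first ingredient is the global-in-time dissipation bound, which follows from the energy inequality \eqref{e1.energy} together with the auxiliary estimate \eqref{e1.T_Phi}: one obtains
\begin{equation*}
\int_0^\infty\!\!\int_\Omega\Bigl(\mu|\nabla u|^2 + (\lambda+\mu)(\div u)^2 + \sum_{j=1}^N D_j c_j\Bigl|\tfrac{\nabla c_j}{c_j}+z_j\nabla\Phi\Bigr|^2\Bigr)dx\,dt\le E(0),
\end{equation*}
together with the uniform-in-time bounds $\rho\in L^\infty(0,\infty;L^\gamma)$, $\sqrt{\rho}\,u\in L^\infty(0,\infty;L^2)$, $\Phi\in L^\infty(0,\infty;H^1)$, and the local-in-time bounds $\sqrt{c_j}\in L^2_{\rm loc}(0,\infty;H^1)$, $\sqrt{c_j}\nabla\Phi\in L^2_{\rm loc}(0,\infty;L^2)$. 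These are precisely the hypotheses needed to trigger Theorem~\ref{T1.2}.

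For $t_n\to\infty$, I would consider the time-shifts $\rho_n(t,x):=\rho(t_n+t,x)$, $u_n(t,x):=u(t_n+t,x)$, $c_j^{(n)}(t,x):=c_j(t_n+t,x)$, $\Phi^{(n)}(t,x):=\Phi(t_n+t,x)$ on $[0,1]\times\Omega$, which are themselves finite-energy weak solutions by translation invariance in $t$. The finiteness of the full dissipation forces $u_n\to 0$ strongly in $L^2(0,1;H_0^1)$ (via Poincar\'e) and
\begin{equation*}
\int_0^1\!\!\int_\Omega c_j^{(n)}\Bigl|\tfrac{\nabla c_j^{(n)}}{c_j^{(n)}}+z_j\nabla\Phi^{(n)}\Bigr|^2\,dx\,dt\longrightarrow 0.
\end{equation*}
Applying Theorem~\ref{T1.2} produces PNP limits $(c_j,\Phi)$ in the strong/weak senses stated therein; the vanishing Fisher-type dissipation combined with lower semicontinuity forces $\nabla c_j + z_j c_j\nabla\Phi\equiv 0$, hence $c_j = A_j e^{-z_j\Phi}$ a.e. The constants $A_j$ are fixed by the conservation of ion masses $\int_\Omega c_j(t)\,dx = I_j^{(0)}$ inherited from \eqref{e1.11}, yielding the stated Boltzmann profile for $c_j^{(s)}$. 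Substituting into \eqref{e1.5}--\eqref{e1.13} gives the Poisson-Boltzmann problem for $\Phi^{(s)}$, whose solution is unique by strict convexity of the associated free-energy functional (using convexity of $\Phi\mapsto \ln\int_\Omega e^{-z_j\Phi}\,dx$).

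For the fluid part, I would pass to the limit in $(\rho_n,u_n)$ by the Lions-Feireisl compactness machinery---renormalized continuity equation, effective viscous flux identity and strong $L^\gamma$-compactness of the density---exactly as in the proof of Theorem~\ref{T1.1}; compactness of the PNP forcing terms in the momentum equation is already supplied by Theorem~\ref{T1.2}. Since $u_n\to 0$, the limit continuity equation reduces to $\partial_t\rho=0$, and the limit momentum equation becomes $\nabla p(\rho)=-\sum_j\nabla c_j+\kappa\Delta\Phi\,\nabla\Phi$; using $\kappa\Delta\Phi=-\sum_j z_j c_j$ together with the equilibrium identities $\nabla c_j = -z_j c_j\nabla\Phi$, the right-hand side vanishes identically, so $p(\rho_s)$, and hence $\rho_s$, is a positive constant, determined by conservation of total mass $\int_\Omega\rho_s\,dx=\int_\Omega\rho_0\,dx$. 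Since every sequence $t_n\to\infty$ yields the same limit $(\rho_s,0,c^{(s)},\Phi^{(s)})$, subsequential convergence upgrades to full convergence, and the strong convergences of $u$, $\Phi$ and $c_j$ in the stated norms follow from the vanishing kinetic dissipation, elliptic regularity applied to \eqref{e1.5}, and Theorem~\ref{T1.2}, respectively. The main technical obstacle I expect lies precisely in upgrading the weak $L^\gamma$ convergence of the time-shifted densities to strong compactness in the presence of the PNP coupling; this requires adapting the effective viscous flux identity along the shifts, which is feasible because all the compactness of the coupling terms needed to do so is already delivered by Theorem~\ref{T1.2}.
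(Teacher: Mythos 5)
Your plan follows the same time-shift compactness strategy as the paper, and the identification of the limiting profile $(c_j^{(s)},\Phi^{(s)})$ via vanishing Fisher-type dissipation, conservation of ion masses, the Poisson--Boltzmann equation and uniqueness is essentially the paper's argument. The paper also reaches $\nabla\overline{p}=0$ from the equilibrium identity $\nabla c_j^{(s)}+z_jc_j^{(s)}\nabla\Phi^{(s)}=0$, exactly as you do. However, there is a genuine gap in your final step.

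You write that because every sequence $t_n\to\infty$ yields the same limit, ``subsequential convergence upgrades to full convergence,'' and then attribute the claimed strong convergences of $\rho(t),u(t),c_j(t),\Phi(t)$ to the vanishing dissipation, elliptic regularity and Theorem~\ref{T1.2}. This does not close the argument. The time-shift method gives compactness of the shifted fields only in space-time norms over the window $(0,1)\times\Omega$ (e.g.\ $\rho_n\to\rho_s$ in $L^\gamma((0,1)\times\Omega)$, $u_n\to 0$ in $L^2(0,1;H_0^1)$). From this one obtains $\rho(t)\rightharpoonup\rho_s$ weakly in $L^\gamma(\Omega)$ as $t\to\infty$ via the continuity equation, and a.e.-in-time convergence along subsequences, but not the pointwise-in-$t$ strong convergence $\rho(t)\to\rho_s$ in $L^\gamma(\Omega)$, $c_j(t)\to c_j^{(s)}$ in $L^1(\Omega)$, $\Phi(t)\to\Phi^{(s)}$ in $H^1(\Omega)$, nor $\|u(t)\|_{L^2}\to 0$ for all $t\to\infty$ (a square-integrable function need not tend to zero). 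The paper supplies the missing ingredient by exploiting the monotone decrease of the energy $E(t)$ to a finite limit $E_\infty$, showing via the strong space-time convergence of the shifts that $E_\infty$ equals the value of the convex functional $F$ at the stationary state, and then invoking uniform (strict) convexity of $F$ in $(\rho,c_j,\Phi)$ together with weak convergence to upgrade to strong convergence at each time $t$ (cf.\ Theorem~2.11 in \cite{F}). Without this convexity/energy argument your proof does not deliver the pointwise-in-$t$ strong convergences that the theorem asserts.

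Two smaller remarks. First, deducing $\nabla c_j^{(s)}+z_jc_j^{(s)}\nabla\Phi^{(s)}=0$ a.e.\ ``by lower semicontinuity'' requires care at the set $\{c_j^{(s)}=0\}$; the paper handles the degeneracy with the regularized test function $\ln(c_j^{(s)}+\delta)+z_j\Phi^{(s)}$, passing $\delta\to 0$ with monotone/dominated convergence, which your sketch should spell out. Second, for the strong $L^\gamma$ compactness of $\rho_n$, you propose reusing the effective viscous flux identity from Theorem~\ref{T1.1}; the paper instead observes that $\nabla\overline p=0$ and then applies the Feireisl--Petzeltov\'a $L^p$ div-curl argument together with the renormalized continuity equation. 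Both routes should work once the PNP compactness from Theorem~\ref{T1.2} is in hand, so this is a stylistic difference rather than a gap, but you should be aware that the paper's reduction to ``$\overline p$ independent of $x$'' substantially simplifies the analysis compared with rerunning the full effective-viscous-flux machinery.
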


As another application of the weak sequential stability in Theorem \ref{T1.2}, we also justify the incompressible limit of the PNPNS system in Theorem \ref{T1.3}, which will be discussed in detail in Section 6.

\subsection{Difficulties and strategy of proofs}
Although the Poisson-Nernst-Planck equations are a semilinear parabolic system, the natural energy estimates only provide time uniform boundedness of the ion concentrations in $L^1(\Omega) \ln(L^1(\Omega))$. Moreover, the weak solutions for the ionic concentrations are non-negative, but may vanish and the are not necessarily (uniformly) bounded. Then, it is not clear from the energy dissipation that (e.g) the $H^1(\Omega)$ norm of the ion concentrations is finite or even (square) integrable with respect to time. Thus, the existence of global weak solutions is not straightforward. In addition, the strong coupling with the Navier-Stokes equations also adds additional difficulties. Indeed, the flux of ion particles is convected by the fluid motion. Furthermore, the forcing terms in the momentum equation depend explicitly on the ion concentrations, and the energy inequality also does not provide a straightforward bound for them, due to the lack of boundedness form above and away from zero of the ion concentrations.  

Compared to the incompressible version of the equations, we also face additional challenges, due to the possibility of vacuum. Indeed, in the compressible case, we do not expect that the velocity field is bounded in $L^2(\Omega)$, uniformly in time. Thus, the convective term in the Nersnt-Planck equation is delicate and the estimates to ensure stability of the solutions are complicated. Moreover, the analysis of the pressure $p(\rho)$, which is key in the theory of weak solutions for the compressible Navier-Stokes equations, is sensitive to the behavior of the forcing terms in the momentum equation. 

It is also worth mentioning that the boundary conditions, which are motivated by physical considerations, also play an important role in the energy estimates and make the stability analysis more intricate.

In order to overcome these difficulties we show that the solutions of the Poisson-Nerst-Placnk subsystem are stable with respect to a given (compressible) velocity field in the sense of Theorem~\ref{T1.2}. Let us point out that the hypotheses \eqref{e1.est} and \eqref{e1.estu} are compatible with the natural energy estimates \eqref{e1.energy} and \eqref{e1.T_Phi} (which is a consequence of  \eqref{e1.energy}). 

The weak sequential stability of the PNP subsystem allows for the adaptation of the established theory for the Navier-Stokes equations contained in \cite{L2,F,FKP}. Indeed, based on the {\it a priori} estimates provided by the energy inequality, it follows that, up to a subsequence, the approximate solutions converge to the limit PNPNS system with $p(\rho)$ replaced by a weak limit of the sequence of pressures, denoted by $\overline{p(\rho)}$. Then, the key point is to show that the fluid's densities converge strongly, so that $\overline{p(\rho)}=p(\rho)$. As in the case of the Navier-Stokes equations, the convergence and consistency of a regularized system of equations is enabled by the weak continuity of the effective viscous flux $p-(\lambda+2\mu)\div u$ (see also \cite{S,H1,H2}), which guarantees the stability of renormalized solutions of the continuity equation and which, in turn, provide the strong convergence of the densities. With Theorem~\ref{T1.2} at hand, the proof of the weak continuity of the effective viscous flux follows almost directly from the arguments in \cite{F}, once we realize that the forcing terms in the continuity equation, 
 depending explicitly on the ion densities and  the self-consistent potential, converge nicely to their corresponding limits.

The regularized system that we are considering consists of  an artificial viscosity in the continuity equation and an artificial pressure term in the momentum equation, as in \cite{FNP}. We also regularize the Poisson equation \eqref{e1.5}, motivated by a similar approach adopted in \cite{FS} in the incompressible case of the equations. Next, we solve the regularized system through a Faedo-Galerkin scheme, where a thorough analysis of the PNP subsystem in terms of a given velocity field is performed. More precisely, we first assume that the velocity field $u$ is given and solve the regularized continuity equation, as well as the Poisson-Nernst-Planck subsystem in terms of $u$. Then, we substitute these solutions into a Galerkin approximation of the momentum equation and solve it locally in time via Schauder's fixed point theorem. After this, we prove several {\it a priori} estimates independent of the regularizing parameters, based on the dissipative structure of the system, which serve the purpose of extending the local approximate solutions to global ones and also to take the limit along a subsequence in order to show convergence of the Galerkin scheme. 

The analysis of the convergence of the scheme requires attention. Indeed, the energy estimates for the ion densities are not suitable to perform usual weak convergence arguments, based on Sobolev embeddings. Instead, motivated by some arguments in \cite{FS} on the incompressible PNPNS equations, we find that it is better to work with the square root of the ion densities for which it is possible to deduce $L^\infty(0,T;L^2(\Omega))$ and $L^2(0,T;\dot{H}^1(\Omega))$ estimates. The first one of them is provided by the conservation of the ion masses, related to equation \eqref{e1.3}, which, together with no-slip boundary condition \eqref{e1.10} and the blocking boundary conditions \eqref{e1.11}, implies that the $L^1$ norm of the ion densities is preserved in time. The second estimate corresponds to \eqref{e1.T_Phi}, which follows from the energy inequality. Then, we are able to show strong convergence of the square roots of the approximate ion densities and, consequently, on the densities. This is, roughly, the key to the proof of Theorem~\ref{T1.2}.

Theorem~\ref{T1.ltb} essentially states that the weak solutions of the PNPNS system converge to a stationary solution, which is uniquely determined by the initial data. We mention that an analogous result was obtained by Constantin and Ignatova \cite{CI1} in the two-dimensional and incompressible case for the strong solutions to the boundary value problems of the PNPNS system. The proof of the result of large-time behavior in \cite{CI1} relies on the boundedness of the solutions in higher-order norms, which we do not expect to control for weak solutions. Thus, we introduce a different approach, inspired by the result on large time behavior for weak solutions of the compressible Navier-Stokes equations by Feireisl and Petzeltov\'{a} \cite{FP}. In other words, we arrive at the same result, assuming only the minimum regularity of the solutions provided by the energy. The proof of Theorem~\ref{T1.ltb} also uses the weak sequential stability of weak solutions from Theorem~\ref{T1.2}.

As a final comment, we point that Theorem~\ref{T1.2} is general and may be applied in other contexts. To illustrate this, we include one extra Section (Section~\ref{S5} below) where we discuss further results on the PNPNS system that can be deduced as applications of Theorem~\ref{T1.2}, namely, the incompressible limit of the PNPNS equations proved in Theorem \ref{T1.3}.


\subsection{Organization of the paper}
The rest of the paper is organized as follows. In Section~\ref{S:2} we analyze the dissipative structure of the system by deducing a general energy equation, independent of any boundary conditions. In Section~3 we introduce and prove the existence of solutions to a regularized PNPNS system. In the process we deduce the weak sequential stability of solutions to the PNP subsystem, i.e., Theorem~\ref{T1.2}. More precisely, in Subsection~\ref{S3.5} we prove a version of Theorem~\ref{T1.2} for the regularized PNP subsystem and whose proof contains the case of the original PNP subsystem. In Section~\ref{S4} we consider the limit as the regularizing parameters vanish in order to find a solution of the original PNPNS system in the limit, which completes the proof of Theorem~\ref{T1.1}. In Section~\ref{S50} we discuss the long time behavior of the weak solutions provided by Theorem~\ref{T1.1} and in Section~\ref{S5} we discuss the incompressible limit of the equations, as applications of Theorem~\ref{T1.2}. Lastly, we include an Appendix where we discuss the energy dissipation subject to another physically meaningful set of boundary conditions.



\section{Energy Equation}\label{S:2}

As aforementioned, the system \eqref{e1.1}-\eqref{e1.5} has been derived in \cite{WLT1,WLT2}  for the case of two ionic species by an energetic variational approach in the spatial domain $\mathbb{R}^3$, 
and was shown to have a dissipative structure under certain far-field conditions. In this section, we discuss the dissipative structure of the above initial-boundary value problem by (formally) deriving an energy identity for the model.


Considering the term $-\sum\nabla\varphi_j(c_j) + \kappa\nabla\Phi\Delta\Phi$ in the momentum equation as an external force,  we can multiply equation \eqref{e1.2} by $u$ and perform standard calculations, using also the continuity equation \eqref{e1.1}, to obtain
\begin{multline}\label{e.u}
\partial_t\left( \rho\left(\frac{1}{2} |u|^2 + \frak{e}(\rho)\right) \right) + \mu|\nabla u|^2 + (\lambda+\mu)(\div u)^2\\
+\div\left( u\left(\rho\left(\frac{1}{2} |u|^2 + \frak{e}(\rho)\right) + p(\rho) \right)\right) + \div(\mathbb{S}\cdot u) \\
= u\cdot\left(-\sum_{j=1}^N\nabla\varphi_j(c_j)+ \kappa\nabla\Phi\Delta\Phi\right),
\end{multline}
where $\frak{e}(\rho)$ is the internal energy given by
\begin{equation}\label{e.imte}
\frak{e}:=\int^\rho \frac{p(s)}{s^2}ds.
\end{equation}
Note that, from equation \eqref{e1.5}, we have that $u\cdot\kappa\nabla\Phi\Delta\Phi=-u\cdot\nabla\Phi \sum z_j c_j$. 

Next, we take a function $\sigma_j$, called entropy density, that solves the equation 
\begin{equation}\label{e.entropyv}
s\sigma_j'(s)-\sigma_j(s)=\varphi_j(s),\quad s>0,
\end{equation}
and multiply equation \eqref{e1.3} by $\sigma_j'(c_j)+z_j\Phi$ to obtain
\begin{align*}
\partial_t\sigma_j(c_j)+ z_j\partial_tc_j \Phi &= -\div(c_ju)(\sigma_j'(c_j)+z_j\Phi) + \div\left( D_j \Big(\nabla\varphi_j(c_j)+z_jc_j\nabla\Phi\Big) \right)(\sigma_j'(c_j)+z_j\Phi)\\
&:= A+B.
\end{align*}
Noting that \eqref{e.entropyv} implies $\varphi_j'(s)=s\sigma_j''(s)$, we see that $\nabla\varphi_j(c_j)=c_j\nabla\sigma_j'(c_j)$. From this identity, we have that
\begin{align*}
B&=\div \left(D_j c_j\nabla(\sigma_j'(c_j)+z_j\Phi)\right)(\sigma_j'(c_j)+z_j\Phi)\\
 &=\div\Big(D_j c_j(\sigma_j'(c_j)+z_j\Phi) \nabla(\sigma_j'(c_j)+z_j\Phi) \Big) - D_jc_j|\nabla (\sigma_j'(c_j)+z_j\Phi)|^2.
\end{align*}
We also see that
\begin{align*}
A&= -\div\Big( c_j(\sigma_j'(c_j)+z_j\Phi)u\Big) + c_j\nabla(\sigma_j'(c_j)+z_j\Phi)\cdot u\\
 &= -\div\Big( c_j(\sigma_j'(c_j)+z_j\Phi)u\Big) + \nabla\varphi_j(c_j)\cdot u +z_jc_j\nabla\Phi\cdot u.
\end{align*}

Thus we, get that
\begin{multline}\label{e.vt2}
\partial_t\sigma_j(c_j)+ z_j\partial_tc_j \Phi = -\div\Big( c_j(\sigma_j'(c_j)+ z_j\Phi)u\Big) + \nabla\varphi_j(c_j)\cdot u + z_jc_j\nabla\Phi\cdot u\\
  +  \div\Big(D_j c_j(\sigma_j'(c_j)+ z_j\Phi) \nabla(\sigma_j'(c_j)+ z_j\Phi) \Big) - D_jc_j|\nabla (\sigma_j'(c_j)+ z_j\Phi)|^2.
\end{multline}


Summing in $j$  and recalling \eqref{e1.5} yields
\begin{multline}\label{e.vw3}
\partial_t\Big(\sum_{j=1}^N\sigma_j(c_j)\Big) - \kappa\Delta \Phi_t \Phi + \sum_{j=1}^N D_j c_j|\nabla (\sigma_j'(c_j)+ z_j\Phi)|^2\\
 =-\div\Big( \sum_{j=1}^Nc_j(\sigma_j'(c_j)+ z_j\Phi)u\Big) +u\cdot\sum_{j=1}^N\nabla\varphi_j(c_j)   +\Big(\sum_{j=1}^N z_j c_j\Big)\nabla\Phi\cdot u  \\
  +  \div\Big(\sum_{j=1}^N D_jc_j(\sigma_j'(c_j)+z_j\Phi) \nabla(\sigma_j'(c_j)+z_j\Phi) \Big).
\end{multline}

Then, writing $$-\kappa\Delta \Phi_t \Phi = \frac{\kappa}{2}(|\nabla\Phi|^2)_t - \kappa\div(\Phi\nabla\Phi_{t}),$$ we have
\begin{multline}\label{e.vw}
\partial_t\Big(\sum_{j=1}^N\sigma_j(c_j)\Big)+\frac{\kappa}{2}\partial_t(|\nabla\Phi|^2) + \sum_{j=1}^ND_jc_j|\nabla (\sigma_j'(c_j)+ z_j\Phi)|^2\\
 =\div(\kappa\Phi\nabla\Phi_{t})-\div\Big( \sum_{j=1}^Nc_j(\sigma_j'(c_j)+ z_j\Phi)u\Big) +u\cdot\sum_{j=1}^N\nabla\varphi_j(c_j)   +\Big(\sum_{j=1}^N z_j c_j\Big)\nabla\Phi\cdot u  \\
  +  \div\Big(\sum_{j=1}^N D_jc_j(\sigma_j'(c_j)+z_j\Phi) \nabla(\sigma_j'(c_j)+z_j\Phi) \Big).
\end{multline}

Now, we add equations \eqref{e.u} and \eqref{e.vw} to obtain
\begin{align}\label{e.energy0}
&\frac{d}{dt}\left( \rho\left(\frac{1}{2} |u|^2 + \frak{e}(\rho)\right) + \sum_{j=1}^N\sigma_j(c_j) + \frac{\kappa}{2}|\nabla\Phi|^2 \right)\\
&\quad + \mu|\nabla u|^2 + (\lambda+\mu)(\div u)^2 + \sum_{j=1}^ND_jc_j|\nabla (\sigma_j'(c_j)+ z_j\Phi)|^2\nonumber\\
&= -\div\left( u\left(\rho\left(\frac{1}{2} |u|^2 + \frak{e}(\rho)\right) + p(\rho)\right) \right) - \div(\mathbb{S}\cdot u)+ \div(\kappa\Phi\nabla\Phi_{t})\nonumber \\
&\quad -\div\Big( \sum_{j=1}^Nc_j(\sigma_j'(c_j)+ z_j\Phi)u\Big) +  \div\Big(\sum_{j=1}^N D_jc_j(\sigma_j'(c_j)+z_j\Phi) \nabla(\sigma_j'(c_j)+z_j\Phi) \Big).\nonumber\nonumber
\end{align}

To conclude, we use, once again, the identities $c_j\nabla\sigma_j'(c_j)=\nabla\varphi_j(c_j)$ in order to rewrite the last term on the right-hand-side of \eqref{e.energy0} to finally obtain the following energy equation
\begin{align}\label{e.energydif}
&\frac{d}{dt}\left( \rho\left(\frac{1}{2} |u|^2 + \frak{e}(\rho)\right) + \sum_{j=1}^N\sigma_j(c_j) + \frac{\kappa}{2}|\nabla\Phi|^2 \right)\\
&\quad + \mu|\nabla u|^2 + (\lambda+\mu)(\div u)^2 + \sum_{j=1}^ND_jc_j|\nabla (\sigma_j'(c_j)+ z_j\Phi)|^2\nonumber\\
&= -\div\left( u\left(\rho\left(\frac{1}{2} |u|^2 + \frak{e}(\rho)\right) + p(\rho)\right) \right) - \div(\mathbb{S}\cdot u)+ \div(\kappa\Phi\nabla\Phi_{t})\nonumber \\
&\quad -\div\Big( \sum_{j=1}^Nc_j(\sigma_j'(c_j)+ z_j\Phi)u\Big) +  \div\Big(\sum_{j=1}^N D_j(\sigma_j'(c_j)+z_j\Phi) (\nabla\varphi_j(c_j)+z_jc_j\nabla\Phi) \Big).\nonumber\nonumber
\end{align}

We emphasize that up to this point we have not used the boundary conditions. However, taking them  into account, we can integrate \eqref{e.energydif} in order to obtain an integral energy identity. Indeed, except for the term $\div(\kappa\Phi\nabla\Phi_{t})$, all of the terms on the right-hand-side of \eqref{e.energydif} vanish upon integration over $\Omega$,  in accordance to the no slip boundary condition \eqref{e1.10} and the blocking boundary conditions \eqref{e1.11}. In order to deal with the term $\div(\kappa\Phi\nabla\Phi_{t})$, we take into account condition \eqref{e1.13} and notice that, since $V$ does not depend on t, we have 
\begin{align*}
\int_\Omega \div(\kappa\Phi\nabla\Phi_t)dx &=  \kappa\int_{\partial\Omega} \Phi\partial_\nu\Phi_t\, dS\\
    &=\kappa\tau\int_{\partial\Omega} \Phi\Phi_t\, dS\\
    &=\frac{\kappa\tau}{2}\frac{d}{dt}\int_{\partial\Omega}\Phi^2dS.
\end{align*}
Thus, we conclude that
\begin{equation}\label{e.energyint}
\frac{d}{dt}E(t) + \int_\Omega \Big(\mu|\nabla u|^2 + (\lambda+\mu)(\div u)^2 + \sum_{j=1}^ND_jc_j|\nabla (\sigma_j'(c_j)+ z_j\Phi)|^2\Big)dx = 0,
\end{equation}
where,
\begin{equation}
E(t)=\int_\Omega\left( \rho\left(\frac{1}{2} |u|^2 + \frak{e}(\rho)\right) + \sum_{j=1}^N\sigma_j(c_j) + \frac{\kappa}{2}|\nabla\Phi|^2  \right)dx+\frac{\kappa\tau}{2}\int_{\partial\Omega}\Phi^2dS.
\end{equation}

%

\begin{remark}\label{R.1}
Usually, the entropy densities $\sigma_j(c_j)$ are $\sigma_j(c_j)=c_j\ln c_j-c_j+1$, corresponding to $\varphi_j(c_j)=(c_j-1)$, according to \eqref{e.entropyv}. Indeed, as already mentioned, this is in accordance to Fick's law of diffusion by which the flux of ions should go from regions of high concentration to regions of low concentrations, that is, in a direction proportional (and opposite) to the gradient, resulting in the choice (up to an additive constant) of $\varphi_j(c_j)=c_j$ in the equation \eqref{e1.3}. In this case, we note that $\sigma_j(c_j)$ is not actually differentiable at $c_j=0$. Therefore, we cannot just multiply equation \eqref{e1.3} by $\sigma_j'(c_j)+z_j\Phi$, as we did above. So, in order to obtain an estimate like \eqref{e.energyint} rigorously, we would have to replace $c_j\ln(c_j)$ by $(c_j+\delta)\ln(c_j+\delta)$ and then let $\delta\to 0$ (cf.  \cite[Proof of Lemma 3.7]{BFS}).
\end{remark}

\begin{remark}
Identity \eqref{e.energyint} yields the inherent dissipative structure of system \eqref{e1.1}-\eqref{e1.5}, under the boundary conditions \eqref{e1.10}-\eqref{e1.13}. The calculations performed so far are formal, as they were developed under the assumption of smoothness of the solutions. We will show existence of weak solutions of the system as limits of a sequence of solutions to a regularized system, where the calculations can be made rigorous. In the limit, \eqref{e.energyint} is shown to be satisfied as an inequality by the weak solutions.
\end{remark}

\begin{remark}
We point out that we have not used the constitutive relation \eqref{e1.7} on the pressure in order to deduce \eqref{e.energydif} and \eqref{e.energyint}, where, as is usual for the fluid equations, the contribution of the pressure to the energy is accounted by the internal energy given by \eqref{e.imte}.
\end{remark}

\begin{remark}
Note that we did not use the assumption that the diffusion coefficients $D_j$ are constant in the calculations above. Thus, the energy equation deduced is still valid in the case of nonconstant diffusion coefficients. 
\end{remark}

To finish this section, let us point out that other boundary conditions for the model might be shown to provide good estimates starting from \eqref{e.energydif}, by adapting the arguments above accordingly. See the Appendix for an example.





\section{Approximation Scheme}\label{S3}

We now move on to the prove of Theorem~\ref{T1.1}. In this section and in what follows we take $\varphi_j(c_j)=c_j$, so that $\sigma_j(c_j)=c_j\ln c_j-c_j+1$ (cf. Remark~\ref{R.1} above). We also assume, without loss of generality, that $\kappa=D_j=1$.

We will first prove the existence of solutions to a regularization of system \eqref{e1.1}-\eqref{e1.5} with initial and boundary conditions \eqref{e1.9}-\eqref{e1.13}. First, we write $\Phi(t,x)=\Phi_1(x)+\Phi_2(t,x)$ where $\Phi_1$ is the unique (smooth) solution to
\begin{equation}\label{e3.0}
\begin{cases}
-\Delta \Phi_1 = 0,&\text{in $\Omega$},\\
\partial_\nu \Phi_1 + \tau\Phi_1 = V,&\text{on $\partial\Omega$}.\\
\end{cases}
\end{equation}
Then, for the given small positive constants $\vartheta$ and $\delta$,  we consider the regularized system:  
\begin{align}
&\partial_t\rho+\div(\rho u)=\vartheta \Delta\rho,\label{e3.1}\\ 
&\partial_t(\rho u) + \div(\rho u\otimes u)+\nabla (a\rho^\gamma + \delta \rho^\beta) + \vartheta\nabla u\cdot\nabla \rho= \div\mathbb{S}-\sum_{j=1}^N\nabla c_j-\sum_{j=1}^N z_j c_j \nabla\Phi,\label{e3.2}\\
&\partial_tc_j+\div(c_j(u-z_j\Phi_1))=\div\left( \nabla c_j+z_jc_j\nabla\Phi_2 \right),\label{e3.3}\\[4mm]
&-\Delta\Phi_2=\Psi ,\label{e3.5}\\
&(1-\vartheta\Delta)\Psi = \sum_{j=1}^Nz_j c_j.\label{e3.5'}
\end{align}  
Here, the of artificial viscosity on the right hand side of \eqref{e3.1} is intended to regularize the continuity equation \eqref{e1.1}. The introduction of this term causes an unbalance in the energy of the system which is equated by the term $\vartheta \nabla u\cdot\nabla\rho$ in the regularized momentum equation \eqref{e3.2}. Moreover, the term $\delta \rho^\beta$ with $\beta>1$ large enough (but fixed) acts as an artificial pressure and provides better estimates on the density. The introduction of these regularizing terms is motivated by the analogues in \cite{FNP}.

Note also that we introduced a new variable, namely $\Psi$, and added an extra equation in the system. We point out that equations \eqref{e3.5} and \eqref{e3.5'} are equivalent to
\begin{equation*}
-\Delta\Phi_2 + \vartheta\Delta^2\Phi_2 = \sum_{j=1}^Nz_j c_j,
\end{equation*}
which, together with \eqref{e3.0}, regularizes and formally approximates \eqref{e1.5} as $\vartheta\to0$. Let us mention that the decomposition $\Phi=\Phi_1+\Phi_2$ is motivated by the linearity of the Poisson equation \eqref{e1.5}, wherein $\Phi_1$ depends only on the function $V$ (cf. \eqref{e1.13}), which is a given data of the problem.

We consider the initial-boundary value problem for the regularized system with the following initial data:
\begin{equation}\label{e3.6}
(\rho,\rho u, c_1,...,c_N)(0,x)=(\rho_0, m_0,c_1^{(0)},...,c_N^{(0)})(x), \quad x\in\Omega,
\end{equation}
and the following boundary conditions:
\begin{align}
&\partial_\nu\rho|_{\partial\Omega}=0,\label{e3.7}\\
&u|_{\partial \Omega}=0,\label{e3.8}\\
&\left(\partial_\nu c_j  - c_j\partial_\nu\Phi\right)|_{\partial \Omega} = 0,\label{e3.9}\\
&(\partial_\nu\Phi_2 + \tau\Phi_2)|_{\partial \Omega}=0,\label{e3.11}\\
&(\partial_\nu\Psi + \tau\Psi)|_{\partial \Omega}=0.\label{e3.12}
\end{align}
Note that a Newmann boundary condition has been added for the density of the fluid in accordance with the introduction of the artificial viscosity. We also added a Robin boundary condition for $\Psi$. 

Our first   result of this section concerns the solvability of the regularized system and reads as follows.

\begin{proposition}\label{P:3.1}
Let $T>0$ be given. Suppose that the initial data is smooth and satisfies 
\begin{equation}
M_0^{-1}\le \rho_0\le M_0
\end{equation}
and
\begin{equation}
0\le c_j^0\le M_0,
\end{equation}
for some positive constant $M_0>0$. Assume further that $\partial_\nu\rho_0|_{\partial \Omega}=0$ and that $\beta>\max\{4,\gamma\}$.
Then, there exists a weak solution $(\rho,u,c_j,\Phi,\Psi)$ of system \eqref{e3.1}-\eqref{e3.12}, satisfying that, for some constant $1<r<2$ independent of $\vartheta$ and $\delta$,
\begin{enumerate}
\item $\rho$ is nonnegative and
\[
\rho\in L^r(0,T;W^{2,r}(\Omega))\cap L^{\beta+1}((0,T)\times\Omega),\quad \rho_t\in L^r((0,T)\times\Omega);
\]
\item $u\in L^2(0,T;H_0^1(\Omega))$;
\item $c_j$ are nonnegative and
\[
c_j\in L^\infty(0,T;L^1(\Omega))\cap L^1(0,T;W^{1,\frac{3}{2}}(\Omega)),
\]
with $\sqrt{c_j}\in L^2(0,T;H^1(\Omega))$, $j=1,...,N$;
\item $\Phi = \Phi_1+\Phi_2$, where $\Phi_1$ is the unique solution of \eqref{e3.0} and 
$\Phi_2\in L^\infty(0,T;H^1(\Omega))\cap L^1(0,T,W^{3,\frac{3}{2}}(\Omega))\cap C([0,T];L^p(\Omega))$, for $p\in [1,6)$.
\end{enumerate}
Moreover, the solution satisfies the following energy inequality
\begin{multline}\label{e3.15}
E_{\vartheta,\delta}(t) 	+ \int_0^t\int_\Omega \Big(\mu|\nabla u|^2 + (\lambda+\mu)(\div u)^2 + \sum_{j=1}^Nc_j\Big|\frac{\nabla c_j}{c_j}+ z_j\nabla\Phi\Big|^2\Big)dx\, ds\\
+\vartheta\int_0^t\int_\Omega(a\gamma\rho^{\gamma-2}+\delta\beta\rho^{\beta-2} )|\nabla\rho|^2dx\, ds\le E_{\vartheta,\delta}(0),
\end{multline}
where,
\begin{multline}\label{e3.energy-aprox}
E_{\vartheta,\delta}(t)=\int_\Omega\Bigg( \rho\left(\frac{1}{2} |u|^2 + \frac{a}{\gamma-1}\rho^{\gamma-1} + \frac{\delta}{\beta-1}\rho^{\beta-1}\right) +\frac{\vartheta}{2} |\Delta\Phi_2|^2 \\
+ \sum_{j=1}^N(c_j\ln c_j - c_j + 1) + \frac{1}{2}|\nabla\Phi_2|^2+(\sum_{j=1}^Nz_jc_j)\Phi_1 \Bigg)dx +\frac{\tau}{2}\int_{\partial\Omega}|\Phi_2|^2dS.
\end{multline}
Furthermore, there is a positive constant $C$ independent of $\vartheta$ and $\delta$ such that
\begin{equation}\label{e3.T_Phi}
\int_0^T\int_\Omega\left( \sum_{j=1}^N\frac{|\nabla c_j|^2}{c_j}+c_j|\nabla \Phi|^2+|\Delta \Phi|^2 \right)dx\, dt\le C.
\end{equation}
\end{proposition}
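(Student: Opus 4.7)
The plan is to construct solutions via a Faedo--Galerkin approximation in the spirit of Feireisl--Novotn\'y--Petzeltov\'a \cite{FNP}, treating the Poisson--Nernst--Planck sub-system as a nonlinear ``source'' driven by the velocity. Fix a nested family of finite-dimensional subspaces $X_n\subset H^1_0(\Omega)\cap C^\infty(\overline\Omega)$ whose union is dense in $H^1_0(\Omega)$. For a candidate velocity $u\in C([0,T'];X_n)$ (automatically smooth in $x$), I would first solve the regularized continuity equation \eqref{e3.1}, \eqref{e3.7} as a linear parabolic problem, for which the maximum principle yields a classical $\rho[u]>0$. With this $\rho$ in hand, I would then solve the regularized PNP sub-system \eqref{e3.3}--\eqref{e3.5'}, \eqref{e3.9}, \eqref{e3.11}--\eqref{e3.12}: here the $\vartheta\Delta$ regularization of \eqref{e3.5'} is crucial, since the Robin problem produces $\Psi\in H^2$ from $c_j\in L^2$, whence $\Phi_2\in H^4\hookrightarrow W^{1,\infty}$, so the Nernst--Planck equations have smooth drifts and can be solved for nonnegative $c_j[u]$ by a Banach-type fixed point (or an internal Galerkin scheme). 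Plugging $(\rho[u],c_j[u],\Phi[u])$ into the Galerkin projection of \eqref{e3.2} onto $X_n$ produces a finite-dimensional ODE whose short-time solvability follows from Schauder's theorem, yielding a local Galerkin solution $(\rho_n,u_n,c_{j,n},\Phi_n,\Psi_n)$ on $[0,T']$.

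To globalize and to obtain the bounds in items (1)--(4), I would carry out, now rigorously at the Galerkin level, the formal energy computation of Section~\ref{S:2}. The artificial viscosity $\vartheta\Delta\rho$ produces the additional dissipation $\vartheta\int(\gamma a\rho^{\gamma-2}+\beta\delta\rho^{\beta-2})|\nabla\rho|^2$ and is balanced by the corrector $\vartheta\nabla u\cdot\nabla\rho$ in \eqref{e3.2} as in \cite{FNP}. Testing \eqref{e3.5'} against $\Psi_t$ and using \eqref{e3.5} produces the extra $\tfrac{\vartheta}{2}|\Delta\Phi_2|^2$ contribution, while \eqref{e3.11} gives the boundary term $\tfrac{\tau}{2}\|\Phi_2\|_{L^2(\partial\Omega)}^2$ as in Section~\ref{S:2}; the term $(\sum z_jc_j)\Phi_1$ in \eqref{e3.energy-aprox} appears because the drift associated with $\Phi_1$ has been moved to the convective side of \eqref{e3.3}. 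The non-smoothness of $c_j\ln c_j$ at $c_j=0$ is handled as in Remark~\ref{R.1}, by testing against $\ln(c_j+\delta')+z_j\Phi$ and sending $\delta'\to0^+$. These ingredients yield \eqref{e3.15} uniformly in $n$, which extends the Galerkin solution to $[0,T]$. For \eqref{e3.T_Phi} I would expand
\[
c_j\Big|\tfrac{\nabla c_j}{c_j}+z_j\nabla\Phi\Big|^2=\tfrac{|\nabla c_j|^2}{c_j}+2z_j\nabla c_j\cdot\nabla\Phi+z_j^2c_j|\nabla\Phi|^2,
\]
integrate the cross term by parts so as to rewrite it via the Poisson equation in terms of $c_j(\sum z_kc_k)$, and close with the $L^1$-conservation of $c_j$ (which follows from \eqref{e3.8}--\eqref{e3.9}); elliptic regularity for the Robin problem then also controls $\|\Delta\Phi\|_{L^2}$.

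The main obstacle I anticipate is the limit $n\to\infty$ in the coupling terms, since the natural bounds give only $\sqrt{c_{j,n}}\in L^2_tH^1_x$ and $c_{j,n}\in L^\infty_tL\log L$, so Aubin--Lions applied directly to $c_{j,n}$ does not yield strong compactness. The remedy, which is precisely the content of a regularized version of Theorem~\ref{T1.2} to be proved in Section~\ref{S3.5}, is to combine the $L^2_tH^1_x$ bound on $\sqrt{c_{j,n}}$ with an $H^{-1}$-type estimate on $\partial_t\sqrt{c_{j,n}}$ deduced from \eqref{e3.3}, to obtain strong compactness of $\sqrt{c_{j,n}}$, and thus of $c_{j,n}$ in $L^1_tL^p_x$ for $p<3$, while $\Phi_n\to\Phi$ strongly in $C_tL^p_x$ for $p<6$ by elliptic regularity. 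This suffices to identify all the products $c_{j,n}u_n$, $c_{j,n}\nabla\Phi_n$, and hence the forcing $-\sum\nabla c_{j,n}-\sum z_jc_{j,n}\nabla\Phi_n$ in \eqref{e3.2} in the limit. The remaining identification of $a\rho^\gamma+\delta\rho^\beta$ is comparatively routine at this regularized level, since the parabolic smoothing of \eqref{e3.1} gives $\rho_n$ uniformly bounded in $L^2_tH^2_x\cap W^{1,r}_tL^r_x$ for some $r>1$, yielding compactness in $L^{\beta+1}((0,T)\times\Omega)$ by Aubin--Lions.
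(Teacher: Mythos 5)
Your overall strategy --- Faedo--Galerkin in the style of \cite{FNP}, first solving the regularized continuity equation and the regularized PNP subsystem for a given Galerkin velocity, closing the momentum equation via Schauder, and passing to the limit using strong compactness of $\sqrt{c_j^{(n)}}$ rather than of $c_j^{(n)}$ itself --- is exactly the paper's, and the energy bookkeeping you describe (the $\vartheta\nabla u\cdot\nabla\rho$ corrector, the $\tfrac{\vartheta}{2}|\Delta\Phi_2|^2$ contribution from the $(1-\vartheta\Delta)$ layer, the $\delta'$-shift for the non-smoothness of $c_j\ln c_j$) matches Subsection~\ref{S:3.3} and Lemma~\ref{l3.4}.

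Your sketch of \eqref{e3.T_Phi}, however, leaves a gap. After expanding the dissipation and integrating the cross term $2z_j\int\nabla c_j\cdot\nabla\Phi$ by parts, two things need care. First, the interior part acquires a good sign only after summing over $j$: one gets $-2\int(\sum_j z_jc_j)\Delta\Phi$, which through \eqref{e3.5}--\eqref{e3.5'} equals $2\int\big((1-\vartheta\Delta)\Psi\big)\Psi\geq 0$ up to boundary terms; the quantity $c_j(\sum_k z_kc_k)$ you quote for a single $j$ has no definite sign and is not controlled by the $L^1$ conservation of $c_j$. Second, and more importantly, you do not address the boundary term $2z_j\int_{\partial\Omega}c_j\,\partial_\nu\Phi_2\,dS$ produced by the integration by parts. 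The Robin condition turns this into $-2\tau z_j\int_{\partial\Omega}c_j\Phi_2\,dS$, which does not vanish under the blocking condition \eqref{e3.9}, and $L^1$ control of $c_j$ in $\Omega$ says nothing about its trace. The paper absorbs it by writing $c_j=(\sqrt{c_j})^2$, invoking the compact trace embedding $H^1(\Omega)\hookrightarrow L^{8/3}(\partial\Omega)$ with Young's inequality against the $\|\nabla\sqrt{c_j}\|_{L^2(\Omega)}^2$ dissipation, and using the Poincar\'e inequality with boundary term to upgrade the energy bound on $\|\nabla\Phi_2\|_{L^2(\Omega)}+\|\Phi_2\|_{L^2(\partial\Omega)}$ to an $L^\infty_t H^1_x$ bound on $\Phi_2$. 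Without this step the uniform estimate \eqref{e3.T_Phi}, and hence the $L^2_tH^1_x$ bound on $\sqrt{c_j}$ that drives the compactness argument, does not close.
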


We will solve this regularized system as follows. First, we solve the regularized continuity equation as well as the Poisson-Nernst-Planck subsystem in terms of the velocity field. Then, we substitute these solutions into the Galerkin approximation of the regularized momentum equation and solve it through a Faedo-Galerkin scheme, following the ideas in \cite{FNP}. In order to show the convergence of the scheme, we employ several {\it a priori} estimates based on the basic energy estimates delineated in the last section, which can be performed rigorously for the Galerkin approximations.

\subsection{The regularized continuity equation}
We begin by stating a result on the well posedness of the regularized continuity equation in terms of a given smooth velocity field. For the proof, we refer to \cite[Proposition~7.1]{F} (cf. \cite[Lemma~2.2]{FNP}).

Consider the problem
\begin{equation}\label{e3.16}
\begin{cases}
\rho_t + \div(\rho u) = \vartheta \Delta\rho, &\text{on $(0,T)\times\Omega$,}\\
\partial_\nu \rho = 0, &\text{on $\partial\Omega$, $t>0$},\\
\rho=\rho_0, &\text{on $\{t=0\}\times\Omega$},
\end{cases}
\end{equation}
where $u$ is a given velocity field.

\begin{lemma}\label{L:3.2}
Let $\rho_0\in C^{2+\zeta}(\Omega)$, $\zeta>0$ and $u\in C([0,T];C_0^2(\overline{\Omega}))$ be given. Assume further that $\partial_\nu\rho_0=0$.
Then, problem \eqref{e3.16} has a unique classical solution $\rho$ such that
\begin{equation}
\rho_t\in C([0,T];C^\zeta(\overline{\Omega})),\quad \rho\in C([0.T];C^{2+\zeta}(\Omega)).
\end{equation}
Moreover, suppose that the initial density is positive and let  $u\mapsto\rho[u]$ be the solution mapping which assigns to any $u\in C([0,T];C_0^2(\overline{\Omega}))$ the unique solution $\rho$ of \eqref{e3.16}.
Then, this mapping takes bounded sets in the space $C([0,T];C_0^2(\overline{\Omega}))$ into bounded sets in the space 
\[
Y:=\{\partial_t\rho \in C([0,T];C^\zeta(\overline{\Omega}):\rho\in C([0,T];C^\zeta(\overline{\Omega}))  \},
\]
and
\[
u\in C([0,T];C_0^2(\overline{\Omega})) \mapsto \rho[u]\in C^1([0,T]\times\overline{\Omega})
\]
is continuous.
\end{lemma}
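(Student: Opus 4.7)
The plan is to treat \eqref{e3.16} as a linear parabolic problem with H\"older-continuous coefficients and appeal to classical Schauder theory, in the spirit of \cite{F,FNP}. Writing the equation in non-divergence form as
\[
\rho_t - \vartheta \Delta \rho + u \cdot \nabla \rho + (\div u)\,\rho = 0,
\]
the convection coefficient $u$ and the zeroth-order coefficient $\div u$ both lie in $C([0,T]; C^\zeta(\overline{\Omega}))$ under the hypothesis $u \in C([0,T]; C_0^2(\overline{\Omega}))$, and the condition $\partial_\nu \rho_0 = 0$ matches the Neumann boundary condition at $t=0$, giving the required zeroth-order compatibility. The standard theory for linear parabolic equations (Ladyzhenskaya--Solonnikov--Ural'tseva) then produces a unique classical solution in the parabolic H\"older space $C^{1+\zeta/2,\,2+\zeta}([0,T]\times\overline{\Omega})$, which in particular yields the regularity stated in the lemma. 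Uniqueness follows from an energy estimate (or the maximum principle) applied to the difference of two solutions with the same data.

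The Schauder estimate accompanying this existence result takes the form
\[
\|\rho\|_{C^{1+\zeta/2,\,2+\zeta}([0,T]\times\overline{\Omega})}
\;\le\; C\bigl(\vartheta,\, T,\, \Omega,\, \|u\|_{C([0,T];C^2)},\, \|\rho_0\|_{C^{2+\zeta}}\bigr),
\]
which is precisely the boundedness of $u \mapsto \rho[u]$ into $Y$ claimed in the lemma. Positivity of $\rho_0$ is preserved via the weak maximum principle applied to \eqref{e3.16}, which is needed later in the fixed-point argument to keep $\rho[u]$ bounded away from zero.

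For the continuity of $u \mapsto \rho[u]$ from $C([0,T];C_0^2(\overline{\Omega}))$ into $C^1([0,T]\times\overline{\Omega})$, I would exploit the linearity of \eqref{e3.16}: given $u_1,u_2$ in a bounded set of $C([0,T];C_0^2(\overline{\Omega}))$, the difference $w := \rho[u_1] - \rho[u_2]$ solves
\[
w_t - \vartheta \Delta w + u_1\!\cdot\!\nabla w + (\div u_1)\,w
= -(u_1-u_2)\!\cdot\!\nabla \rho[u_2] - \div(u_1-u_2)\,\rho[u_2],
\]
with vanishing initial and Neumann data. Using the uniform Schauder bound from the previous step applied to $\rho[u_2]$, the right-hand side is small in $C([0,T]; C^\zeta(\overline{\Omega}))$ when $\|u_1-u_2\|_{C([0,T];C^2)}$ is small; a further Schauder estimate then produces smallness of $w$ in $C^{1+\zeta/2,\,2+\zeta}$, which embeds continuously into $C^1([0,T]\times\overline{\Omega})$. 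Since the underlying PDE is linear with smooth coefficients, I do not anticipate a substantive obstacle; the only points requiring care are the verification of the boundary compatibility for $\rho_0$, the preservation of positivity, and keeping track of the dependence of the Schauder constants on the parabolicity parameter $\vartheta$, which will be needed uniformly when we later eliminate the regularization.
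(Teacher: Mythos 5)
The paper offers no proof of this lemma; it simply cites Proposition~7.1 of Feireisl's book \cite{F} (cf.\ Lemma~2.2 of \cite{FNP}), and your overall strategy---write the equation in non-divergence form, invoke linear parabolic theory, get positivity from a maximum principle, and derive continuity of $u\mapsto\rho[u]$ from linearity and the Schauder estimate applied to the difference equation---is the same strategy that reference uses. The continuity argument at the end is structurally correct and matches what one would find there.

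However, there is a genuine technical gap in the existence step as you have written it. You appeal to Ladyzhenskaya--Solonnikov--Ural'tseva to produce a solution in the \emph{parabolic} H\"older class $C^{1+\zeta/2,\,2+\zeta}([0,T]\times\overline{\Omega})$. That theorem requires the lower-order coefficients (here $u$ and $\div u$) to lie in the parabolic class $C^{\zeta/2,\,\zeta}$, i.e.\ to be H\"older continuous \emph{in time} as well as in space. The hypothesis only gives $u\in C([0,T];C_0^2(\overline{\Omega}))$---merely continuous in $t$---so the LSU Schauder theorem does not apply directly, and the full parabolic-H\"older conclusion is actually stronger than warranted. Notice that the lemma's conclusion is carefully stated in the weaker anisotropic class $\rho\in C([0,T];C^{2+\zeta})$, $\rho_t\in C([0,T];C^{\zeta})$, which is precisely the maximal regularity one obtains from the analytic-semigroup/Lunardi-type formulation (or, alternatively, by mollifying $u$ in time, applying LSU, and passing to the limit using estimates that depend only on $\sup_t\|u(t)\|_{C^2}$). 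This is exactly how the cited reference handles the issue, and your proof should adopt one of those two mechanisms rather than invoking LSU as stated. A smaller point: the weak maximum principle does not directly give nonnegativity because the zeroth-order coefficient $\div u$ has no sign; one first applies the substitution $\tilde\rho = e^{\lambda t}\rho$ with $\lambda>\|\div u\|_{L^\infty}$ (and the Hopf lemma at the Neumann boundary) to conclude.
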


\subsection{The regularized Poisson-Nersnt-Planck system}

Next, we consider the Poisson-Nersnt-Planck system. Namely, we are going to solve for $c_j$, $j=1,...,N$,  and $\Phi$ in the equations \eqref{e3.3}-\eqref{e3.5} in terms of a given velocity field $u$. More precisely, suppose that $u$ is a given smooth velocity field, which vanishes in the boundary of $\Omega$ (in accordance with the no-slip boundary condition \eqref{e3.8}), and consider the following problem:
\begin{align}
&\begin{cases}\label{e3.NP}
\partial_tc_j+\div(c_ju)=\div\left( \nabla c_j+z_jc_j\nabla\Phi \right), &\text{on $(0,T)\times\Omega$,\, $j=1,...,N$},\\
\partial_\nu c_j  - c_j\partial_\nu\Phi = 0, & \text{on $\partial\Omega$, $t>0$},\\
c_j=c_j^0 & \text{on $\{t=0\}\times\Omega$,\, $j=1,...,N$, }
\end{cases}\\
&\begin{cases}\label{e3.P}
-\Delta\Phi_2=\Psi,\qquad\qquad\qquad\qquad\quad\quad\qquad &\text{on $(0,T)\times\Omega$},\\
(1-\vartheta\Delta)\Psi = \sum_{j=1}^Nz_j c_j &\text{on $(0,T)\times\Omega$},\\
(\partial_\nu\Phi_2 + \tau\Phi_2)|_{\partial \Omega}=0,& \text{on $\partial\Omega$, $t>0$},\\
(\partial_\nu\Psi + \tau\Psi)|_{\partial \Omega}=0& \text{on $\partial\Omega$, $t>0$}.
\end{cases}
\end{align} 

Here, as before, $\Phi=\Phi_1+\Phi_2$, where $\Phi_1$ is the solution of \eqref{e3.0}.

\begin{lemma}\label{l3.PNP[u]}
Let $T>0$ be given and suppose that $u\in C([0,T];C_0^2(\overline{\Omega}))$. Suppose also that $c_j^0$ is nonnegative and bounded, $j=1,...,N$. Then, there is a unique solution $(c,\Phi)$, $c=(c_1,...,c_N)$, of \eqref{e3.NP}-\eqref{e3.P} such that 
\begin{enumerate}
\item $c_j\in  L^2(0,T;H^1(\Omega))$ with $\partial_t c_j\in L^2(0,T;H^{-1}(\Omega))$,
\item $\Phi\in C([0,T];H^4(\Omega))$ is a strong solution of \eqref{e3.P},
\item \eqref{e3.NP} is satisfied in the sense that for any $\eta\in C^\infty([0,T]\times\overline{\Omega})$ such that $\eta(T,\cdot)=0$,
\begin{equation}\label{e3.NPweak}
\int_0^T\int_\Omega \Big(-c_j\eta_t +( -c_ju + \nabla c_j + z_jc_j\nabla\Phi)\cdot\nabla\eta \Big)dx\, ds = \int_\Omega c_j^0 \eta(0)dx.
\end{equation}
\end{enumerate}  
Moreover, let $u\to (c,\Phi)[u]$ be the solution mapping  assigning to each $u\in C([0,T];C_0^2(\overline{\Omega}))$ the unique solution $(c,\Phi)$ of \eqref{e3.NP}-\eqref{e3.P}.
Then, this mapping takes bounded sets in $C([0,T];C_0^2(\overline{\Omega}))$ into bounded sets of $Z=Z_c^N\times Z_\Phi$, where
\[
Z_c := L^2(0,T;H^1(\Omega))\cap L^\infty(0,T;L^2(\Omega))
\]
and
\[
Z_\Phi := C([0,T];H^2).
\]
Furthermore, 
\[
u\in C([0,T];C_0^2(\overline{\Omega}))\mapsto (c,\Phi)[u]\in Z 
\]
is continuous.
\end{lemma}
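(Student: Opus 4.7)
The plan is to decouple the regularized Poisson subsystem from the Nernst-Planck equations and set up a fixed-point iteration. Given $c=(c_1,\ldots,c_N)$ in $C([0,T];L^2(\Omega))^N$, the system \eqref{e3.P} is elliptic and linear at each fixed time. The Robin problem $(1-\vartheta\Delta)\Psi=\sum_{j=1}^N z_j c_j$ is coercive on $H^1(\Omega)$ thanks to the positivity of $\vartheta$ and $\tau$, so Lax-Milgram together with elliptic regularity yields $\Psi(t,\cdot)\in H^2(\Omega)$ with $\|\Psi(t)\|_{H^2}\le C_\vartheta\sum_j\|c_j(t)\|_{L^2}$. The subsequent Robin problem $-\Delta\Phi_2=\Psi$ then gives $\Phi_2(t,\cdot)\in H^4(\Omega)$ by standard elliptic regularity. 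Since $\Phi_1$ is smooth and time-independent, this defines a continuous linear map $c\mapsto\Phi[c]$ from $C([0,T];L^2(\Omega))^N$ into $C([0,T];H^4(\Omega))$, and in particular $\nabla\Phi\in L^\infty((0,T)\times\Omega)$.

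With $\Phi[c]$ smooth, each Nernst-Planck equation becomes a linear uniformly parabolic equation
\[
\partial_t c_j - \Delta c_j + \div(c_j(u-z_j\nabla\Phi[c])) = 0,
\]
subject to the no-flux boundary condition, which collapses to $\partial_\nu c_j+z_j c_j\partial_\nu\Phi=0$ on $\partial\Omega$ thanks to $u|_{\partial\Omega}=0$. Standard Faedo-Galerkin theory produces a unique weak solution $c_j'\in L^2(0,T;H^1(\Omega))\cap C([0,T];L^2(\Omega))$ with $\partial_t c_j'\in L^2(0,T;H^{-1}(\Omega))$. Define the composition $\mathcal{T}:c\mapsto c'$. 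Lipschitz dependence of the Poisson and linear parabolic solves on their data gives continuity of $\mathcal{T}$ on $C([0,T];L^2(\Omega))^N$, while compactness follows from Aubin-Lions applied to $L^2(0,T;H^1)\cap H^1(0,T;H^{-1})\hookrightarrow L^2(0,T;L^2)$, upgraded to $C([0,T];L^2)$ by interpolation with the uniform $L^\infty(L^2)$ bound obtained by testing the equation against $c_j'$. An invariant ball is furnished by the same energy estimate, possibly after first shrinking $T$ and extending by iteration; Schauder's fixed-point theorem then yields a solution.

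Nonnegativity of $c_j$ follows by testing the NP equation against the negative part $(c_j)_-$: the no-flux boundary condition annihilates the boundary contributions, and a Gr\"onwall inequality forces $(c_j)_-\equiv 0$ when $c_j^0\ge 0$. Uniqueness comes from an energy estimate on the difference $(c-\tilde c,\Phi-\tilde\Phi)$ of two solutions, where $\Phi-\tilde\Phi$ is controlled linearly by $c-\tilde c$ in $L^2$ via the resolvent bounds for \eqref{e3.P}, and the resulting Gr\"onwall argument closes because $\|\nabla\Phi\|_{L^\infty}$ is under control. The assertions that $u\mapsto(c,\Phi)[u]$ sends bounded sets of $C([0,T];C_0^2(\overline{\Omega}))$ into bounded sets of $Z$, and that it is continuous, then follow from the same a priori estimates, which depend only on a bound for $u$ in that space, combined with uniqueness to upgrade subsequential convergence of approximate solutions to full convergence.

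The main subtlety is that the induced drift $z_j\nabla\Phi[c]$ depends linearly on $c$, so a priori $\mathcal{T}$ exhibits superlinear growth: the constant $C_\vartheta$ in the resolvent bound is finite but not small, and invariance of a Schauder ball for arbitrary $T$ is not automatic. The most practical remedy is to close a Banach contraction on a short time interval (exploiting the linearity of \eqref{e3.P} in $c$ and the Lipschitz estimate for linear parabolic equations with smooth drift), and then to extend globally to $[0,T]$ using the a priori energy bounds, which remain valid on the whole interval because of the mass-conservation identity $\int_\Omega c_j(t)\,dx=\int_\Omega c_j^0\,dx$ (a consequence of the no-flux boundary condition) and the exponential Gr\"onwall structure of the $L^2$ estimates. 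Once existence is secured, all remaining assertions are obtained by standard compactness-uniqueness arguments.
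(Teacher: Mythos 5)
Your overall scheme — solve the regularized Poisson subsystem for a given $c$, feed the resulting smooth drift into a linear parabolic Nernst--Planck problem, and close a fixed point — is sound and parallels the paper's. The one structural difference is that you iterate on the concentration vector, $c\mapsto c'=\mathcal T(c)$, whereas the paper (following the proof of Lemma~4.1 of [FS]) iterates on the potential, $\varphi_2\mapsto\Phi_2$; both choices are viable, and yours is if anything a little more transparent. The nonnegativity argument (testing against the negative part rather than a smooth truncation $F$) and the uniqueness argument are in substance the same as the paper's.

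There is, however, a genuine gap in the continuation step. You correctly observe that $\mathcal T$ exhibits superlinear growth because the drift is controlled only through $\|\Phi[c](t)\|_{H^2}\le C_\vartheta\|c(t)\|_{L^2}$, so a Schauder ball cannot be made invariant on $[0,T]$ for large $T$. Your remedy — contraction on a short interval, then continuation by a priori bounds — is the right skeleton, but the a priori bound you invoke does not close as written. Testing \eqref{e3.NP} with $c_j$, the Gronwall coefficient is of size $\|u\|_{L^4}^2+\|\nabla\Phi\|_{L^4}^2$. If you control $\|\nabla\Phi\|_{L^4}$ only via the $L^2$-resolvent bound you wrote ($\|\Psi\|_{H^2}\lesssim_\vartheta\|c\|_{L^2}$), the inequality becomes a Riccati-type bound $\frac{d}{dt}\|c\|_{L^2}^2\lesssim(\cdots+\|c\|_{L^2}^2)\|c\|_{L^2}^2$, which blows up in finite time. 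Mass conservation gives you a uniform $L^1$ bound on $c_j$, but you never explain how that $L^1$ bound propagates to the drift; that is exactly the missing estimate. The paper instead first multiplies \eqref{e3.NP} by the entropy variable $\sigma'(c_j)+z_j\Phi_2$ with $\sigma(s)=s\ln s-s+1$ and uses the regularized Poisson structure \eqref{e3.P} and the Robin condition to obtain
\[
\frac{d}{dt}\left[\int_\Omega\sum_j\sigma(c_j)\,dx+\frac12\int_\Omega\big(|\nabla\Phi_2|^2+\vartheta|\Delta\Phi_2|^2\big)\,dx+\frac\tau2\int_{\partial\Omega}|\Phi_2|^2\,dS\right]
\le C\big(\|\nabla\Phi_1\|_{L^\infty}^2+\|u\|_{L^\infty}^2\big)\sum_j\|c_j^0\|_{L^1},
\]
where mass conservation enters precisely to bound the source on the right by initial data. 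This gives a $T$-uniform bound on $\|\Phi_2(t)\|_{H^2}$ (hence on $\|\nabla\Phi(t)\|_{L^4}$) independent of $\|c(t)\|_{L^2}$, after which the $L^2$ Gronwall is genuinely linear and closes. Alternatively, one could argue that the double resolvent $c\mapsto\Phi_2$ maps $L^1(\Omega)$ into $W^{1,4}(\Omega)$ — indeed into $H^{5/2-\epsilon}(\Omega)$, since $(-\Delta)^{-1}(1-\vartheta\Delta)^{-1}$ gains nearly four derivatives and $L^1\hookrightarrow H^{-3/2-\epsilon}$ — so that mass conservation alone controls the drift; that would rescue your route, but it must be stated. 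As written, your proposal is missing the one estimate on $\Phi$ that makes the global-in-time continuation work.
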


\begin{proof} The proof will be divided into three steps as follows.
\smallskip

\noindent {\sl Step 1:} 
The existence of solutions follows from Lemma~4.1 in \cite{FS}, whose proof is roughly as follows: Given $\varphi_2$ smooth enough (belonging to, say, $L^\infty(0,T;W^{1,r}(\Omega))$, for some $3<r<6$), then, equation \eqref{e3.NP}, with $\varphi_2$ instead of $\Phi_2$ on the right-hand-side, is a linear parabolic problem and can be solved by standard methods (see theorem~5.1 in \cite[pg. 170]{LSU}). Once we have the unique solution for this linear equation, we solve for $\Phi_2$ in equation \eqref{e3.P} and use elliptic regularity to show that $\Phi_2\in C([0,T];H^2(\Omega))\cap L^2(0,T;H^3(\Omega))$. In this way, we obtain an operator $\varphi_2 \mapsto \Phi_2$ which can be shown to be a compact operator in $W^{1,r}(\Omega)$ due to the embedding $H^2(\Omega)\hookrightarrow W^{1,r}(\Omega)$. Then, it is possible to use Schauder's fixed point theorem in order to find a fixed point of this operator, which yields the existence of solutions of \eqref{e3.NP}-\eqref{e3.P}. We omit the details.

Once $\Phi\in L^2(0,T;H^3(\Omega))$, we see that $t\to\|\nabla\Phi(t)\|_{L^\infty(\Omega)}^2$ is integrable in $[0,T]$. Using this fact, it may be shown, as in \cite{CI1}, that $c_j(t,x)$ is nonnegative, as long as $c_j^0$ is. Indeed, taking e.g.
\[
F(s)=\begin{cases}
s^2, &\text{if $s< 0$},\\
0, &\text{if $s\ge 0$},
\end{cases}
\] 
multiplying \eqref{e3.NP} by $F'(c_j)$ and integrating, we have
\begin{align*}
\frac{d}{dt}\int_\Omega F(c_j) dx &= -\int_{\Omega}F''(c_j)|\nabla c_j|^2 dx + \int_\Omega F''(c_j)\, c_j\, (u-z_j\nabla\Phi)\cdot\nabla c_j dx \\
&\le -\frac{1}{2}\int_{\Omega}F''(c_j)|\nabla c_j|^2 dx + C\left(\|u\|_{L^\infty(\Omega)}^2+\|\nabla \Phi\|_{L^\infty(\Omega)}^2\right)\int_\Omega F(c_j) dx\\
&\le  C\left(\|u\|_{L^\infty(\Omega)}^2+\|\nabla \Phi\|_{L^\infty(\Omega)}^2\right)\int_\Omega F(c_j) dx.
\end{align*}
Thus, since $F(c_j^0)\equiv 0$, Gronwall's inequality implies that $F(c_j)\equiv 0$, which means that $c_j\ge 0$. 
\smallskip

\noindent {\sl Step 2:}
Let us now prove that the solution operator $u\mapsto (c,\Phi)[u]\in Z$ takes bounded sets of $C([0,T];C_0^2(\overline{\Omega}))$ into bounded sets of $Z$. First, we rewrite equation \eqref{e3.NP} as
\begin{equation}\label{e3.NP'}
\partial_tc_j=\div\left(c_j(z_j\nabla\Phi_1-u)+ c_j\left(\frac{\nabla c_j}{c_j}+z_j\nabla\Phi_2\right) \right).
\end{equation}
Then, inspired by the computations from Section~\ref{S:2}, we multiply \eqref{e3.NP'} by $\sigma'( c_j) + z_j\Phi_2$, where $\sigma(s)=s\ln(s)-s+1$, and integrate to obtain
\begin{align*}
\frac{d}{dt}\int_\Omega \sigma(c_j) dx &+ \int_\Omega (z_j c_j)_t\Phi_2 dx + \int_\Omega c_j\left|\frac{\nabla c_j}{c_j}+z_j\nabla\Phi_2\right|^2 dx\\
	&=\int_\Omega c_j (z_j\nabla\Phi_1-u)\cdot \left(\frac{\nabla c_j}{c_j}+z_j\nabla \Phi_2\right) dx\\
	&\le \frac{1}{2}\int_\Omega c_j\Big|\frac{\nabla c_j}{c_j}+z_j\nabla\Phi_2\Big|^2 dx + C(\|\nabla\Phi_1\|_{L^\infty}+\|u\|_{L^\infty})\int_\Omega c_j dx\\
	&=\frac{1}{2}\int_\Omega c_j\Big|\frac{\nabla c_j}{c_j}+z_j\nabla\Phi_2\Big|^2 dx + C(\|\nabla\Phi_1\|_{L^\infty}^2+\|u\|_{L^\infty}^2)\int_\Omega c_j^{0} dx,
\end{align*}
where, we used the fact that the $c_j$ is a non-negative function whose integral is preserved in time, due to the boundary conditions of the problem \eqref{e3.NP}. Here, as usual, $C$ denotes a positive universal constant which may increase from line to line. Thus,
\begin{multline}\label{e3.23}
\frac{d}{dt}\int_\Omega \sigma(c_j) dx + \int_\Omega (z_j c_j)_t\Phi_2 dx + \frac{1}{2}\int_\Omega c_j\left|\frac{\nabla c_j}{c_j}+z_j\nabla\Phi_2\right|^2 dx\\
\le C(\|\nabla\Phi_1\|_{L^\infty}^2+\|u\|_{L^\infty}^2)\int_\Omega c_j^{0} dx.
\end{multline}

Next, from \eqref{e3.P} we see that
\begin{align*}
\sum_{j=1}^N\int_\Omega (z_j c_j)_t\Phi_2 dx &= \int_\Omega \left( (1-\vartheta\Delta)\Psi \right)_t\Phi_2 dx\\
	&=\int_\Omega (-\Delta\Phi_2)_t\Phi_2 dx - \vartheta\int_\Omega (\Delta\Psi)_t\Phi_2 dx\\
	&:= I_1 + I_2.
\end{align*}
Here we note that
\begin{align*}
I_1 &= \int_\Omega \nabla(\Phi_2)_t\cdot \nabla\Phi_2 dx - \int_{\partial\Omega}(\partial_\nu \Phi_2)_t\Phi_2 dS\\
	&= \int_\Omega (\nabla\Phi_2)_t\cdot \nabla \Phi_2 dx  + \tau \int_{\partial \Omega}(\Phi_2)_t\Phi_2 dS\\
	&=\frac{d}{dt}\left[\frac{1}{2}\int_\Omega |\nabla\Phi_2|^2 dx + \frac{\tau}{2}\int_{\partial\Omega}|\Phi_2|^2 dS \right].
\end{align*}
Regarding $I_2$, according to the boundary conditions in \eqref{e3.P} we have
\begin{align*}
I_2&=\vartheta\int_\Omega\nabla\Psi_t\cdot\nabla\Phi_2 dx - \vartheta\int_{\partial\Omega}\partial_\nu\Psi_t \Phi_2 dS\\
	&=-\vartheta\int_\Omega\Psi_t\Delta \Phi_2 dx + \vartheta\int_{\partial\Omega}\Psi_t\partial_\nu\Phi_2 dx -\vartheta\int_{\partial\Omega}\partial_\nu\Psi_t \Phi_2 dS\\
	&=-\vartheta\int_\Omega\Psi_t\Delta \Phi_2 dx + \vartheta\int_{\partial\Omega}\Psi_t(-\tau\Phi_2) dx -\vartheta\int_{\partial\Omega}(-\tau\Psi)_t \Phi_2 dS\\
	&=-\vartheta\int_\Omega\Psi_t\Delta \Phi_2 dx\\
	&=\vartheta\int_\Omega (\Delta\Phi_2)_t\Delta\Phi_2 dx\\
	&=\frac{d}{dt}\left(\frac{\vartheta}{2}\int_\Omega |\Delta\Phi_2|^2dx\right).
\end{align*}
Gathering these identities in \eqref{e3.23} we obtain
\begin{multline}
\frac{d}{dt}\left[\int_\Omega \sum_{j=1}^N\sigma(c_j) dx + \frac{1}{2}\int_\Omega (|\nabla\Phi_2|^2+\vartheta|\Delta\Phi_2|^2) dx + \int_{\partial\Omega}|\Phi|^2 dS \right] \\
+ \frac{1}{2}\sum_{j=1}^N\int_\Omega c_j\left|\frac{\nabla c_j}{c_j}+z_j\nabla\Phi_2\right|^2 dx\\
 \le C(\|\nabla\Phi_1\|_{L^\infty}^2+\|u\|_{L^\infty}^2)\sum_{j=1}^N\int_\Omega c_j^{0} dx.
\end{multline}
In particular, we have that $\Phi\in L^\infty(0,T;H^2(\Omega))$ and $$\sup_{0\le t\le T}\int_\Omega (|\nabla\Phi_2(t,x)|^2+\vartheta|\Delta\Phi_2(t,x)|^2 )dx$$ is bounded by a constant which depends only on $V$, $\|u\|_{C([0,T]\times\overline{\Omega})}$ and the initial data. Consequently, we also have that $\sup_{0\le t\le T}\|\nabla\Phi(t)\|_{L^r(\Omega)}$, $1\le r\le 6$ is also bounded by a constant which depends only on $V$, $\|u\|_{C([0,T]\times\overline{\Omega})}$, the initial data and $\vartheta$.

Now we multiply \eqref{e3.NP} by $c_j$ and integrate to obtain
\begin{align*}
\frac{d}{dt}\left(\frac{1}{2}\int_\Omega c_j^2 dx\right) + \int_\Omega |\nabla c_j|^2 dx
	&= \int_\Omega c_j (u-z_j\nabla\Phi)\cdot \nabla c_j dx\\
	&\le C\|c_j(t)\|_{L^4(\Omega)}(\|u\|_{L^4(\Omega)}+\|\nabla\Phi\|_{L^4(\Omega)})\|\nabla c_j\|_{L^2(\Omega)}.
\end{align*}
Here we use the Sobolev inequality for a given positive $\varepsilon$ to estimate 
$$\|c_j(t)\|_{L^4(\Omega)}\le C_\varepsilon \|c_j(t)\|_{L^2(\Omega)}+\varepsilon\|\nabla c_j\|_{L^2(\Omega)}$$ for some universal constant $C_\varepsilon$ (which holds true due to the compactness of the embedding $H^1(\Omega)\hookrightarrow L^4(\Omega)$). Thus, choosing $\varepsilon$ small enough, we obtain
\begin{align*}
\frac{d}{dt}\left(\frac{1}{2}\int_\Omega c_j^2 dx\right) &+ \int_\Omega |\nabla c_j|^2 dx\\
&\le C(\|u\|_{L^4(\Omega)}+\|\nabla\Phi\|_{L^4(\Omega)})^2\|c_j(t)\|_{L^2(\Omega)}^2 +  \frac{1}{2}\|\nabla c_j\|_{L^2(\Omega)},
\end{align*}
which means that
\[
\frac{d}{dt}\left(\frac{1}{2}\int_\Omega c_j^2 dx\right) + \frac{1}{2}\int_\Omega |\nabla c_j|^2 dx\le C(\|u\|_{L^4(\Omega)}+\|\nabla\Phi\|_{L^4(\Omega)})^2\|c_j(t)\|_{L^2(\Omega)}^2,
\]
and, therefore, Gronwall's inequality yields the uniform bounds on $c_1$,...,$c_N$ in $Z_c$.

Moreover, using the equation \eqref{e3.NP}, we see that $\partial_t c_j \in L^2(0,T,H^{-1}(\Omega))$, which implies that $c_j\in C([0,T];L^2(\Omega))$. This also implies the continuity of $t\mapsto \Phi(t)\in H^4(\Omega)$, by virtue of equation \eqref{e3.P}.
\smallskip

\noindent {\sl Step 3:}
Let us now prove the continuity of the solution operator $$u\in C([0,T];C(\overline{\Omega})) \mapsto (c,\Phi)\in Z.$$
Let $(u_n)_n$ be a sequence such that $u_n\to u$ in $C([0,T];C_0^2(\overline{\Omega}))$ as $n\to\infty$. Denote $(c^{(n)},\Phi^{(n)})=(c,\Phi)[u]$ and $(c,\Phi)=(c,\Phi)[u]$, where $c^{(n)}=(c_1^{(n)},...,c_N^{(n)})$ and $c=(c_1,...,c_N)$. Then, taking the difference of equations \eqref{e3.NP} for $c_j^{(n)}$ and for $c_j$, multiplying the resulting equation by $c_j^{(n)}-c_j$ and integrating over $\Omega$, we obtain
\begin{align*}
&\frac{1}{2}\frac{d}{dt}\int_\Omega (c_j^{(n)}-c_j)^2 dx + \int_\Omega |\nabla(c_j^{(n)}-c_j)|^2dx \\
&\qquad= \int_\Omega \Big((c_j^{(n)}-c_j)u_n + c_j(u_n - u)\Big)\cdot\nabla(c_j^{(n)}-c_j)dx\\
&\qquad\qquad + \int_\Omega\Big(z_j(c_j^{(n)}-c_j)\nabla\Phi^{(n)} + z_j c_j\nabla(\Phi^{(n)}-\Phi)\Big)\cdot\nabla(c_j^{(n)}-c_j)\, dx\\
&\qquad\le C\|c_j^{(n)}-c_j\|_{L^4(\Omega)}\|\nabla(c_j^{(n)}-c_j)\|_{L^2(\Omega)}\left(\|u_n\|_{L^4(\Omega)}+\|\nabla\Phi^{(n)}\|_{L^4(\Omega)} \right)\\[5pt]
&\qquad\qquad + C\|c_j\|_{L^4(\Omega)}\|\nabla(c_j^{(n)}-c_j)\|_{L^2(\Omega)}\left( \|u_n-u\|_{L^4(\Omega)}+\|\nabla(\Phi^{(n)}-\Phi)\|_{L^4(\Omega)} \right)\\
&\qquad\le \frac{1}{4}\|\nabla(c_j^{(n)}-c_j)\|_{L^2(\Omega)}^2 + C\|c_j^{(n)}-c_j\|_{L^4(\Omega)}^2\\
&\qquad\qquad + C\|c_j\|_{H^1(\Omega)}^2\left( \|u_n-u\|_{L^4(\Omega)}^2+\|\nabla(\Phi^{(n)}-\Phi)\|_{L^4(\Omega)}^2 \right)
\end{align*}
Using Sobolev's inequality with $\varepsilon$ as in Step 2 above and choosing $\varepsilon$ small enough, we have that
\begin{align*}
\|c_j^{(n)}-c_j\|_{L^4(\Omega)}^2&\le  \varepsilon\|\nabla(c_j^{(n)}-c_j)\|_{L^2(\Omega)}^2 + C_\varepsilon\|c_j^{(n)}-c_j\|_{L^2(\Omega)}^2\\
&\le \frac{1}{4} \|\nabla(c_j^{(n)}-c_j)\|_{L^2(\Omega)}^2 + C\|c_j^{(n)}-c_j\|_{L^2(\Omega)}^2.
\end{align*}
Thus,
\begin{multline}\label{e3.25}
\frac{1}{2}\frac{d}{dt}\int_\Omega (c_j^{(n)}-c_j)^2 dx + \int_\Omega |\nabla(c_j^{(n)}-c_j)|^2dx \\
\le \frac{1}{2}\int_\Omega |\nabla(c_j^{(n)}-c_j)|^2dx + C\int_\Omega (c_j^{(n)}-c_j)^2 dx\\
+C\|c_j\|_{H^1(\Omega)}^2\left( \|u_n-u\|_{L^4(\Omega)}^2+\|\nabla(\Phi^{(n)}-\Phi)\|_{L^4(\Omega)}^2 \right).
\end{multline}
To conclude, we estimate $\nabla(\Phi^{(n)}-\Phi)$. To this end, we take the difference of equations \eqref{e3.P} for $\Phi_2^{(n)}$ and $\Phi_2$ to obtain the equations
\[
\begin{cases}
-\Delta (\Phi_2^{(n)}-\Phi_2)=\Psi^{(n)}-\Psi,\\
(\Psi^{(n)}-\Psi)-\vartheta\Delta(\Psi^{(n)}-\Psi)=\sum_{j=1}^N z_j(c_j^{(n)}-c_j).
\end{cases}
\]
Multiplying the second equation above by $\Phi^{(n)}-\Phi$ and integrating we get
\begin{multline}\label{e3.26}
-\int_\Omega \Delta(\Phi_2^{(n)}-\Phi_2)(\Phi_2^{(n)}-\Phi)dx - \vartheta\int_\Omega\Delta(\Psi^{(n)}-\Psi)(\Phi_2^{(n)}-\Phi) dx \\
= \int_\Omega\sum_{n=1}^N z_j(c_j^{(n)}-c_j)(\Phi_2^{(n)}-\Phi_2)dx.
\end{multline}
Here, regarding the first integral on the left-hand-side of \eqref{e3.26}, using the boundary conditions we have that
\begin{align*}
&-\int_\Omega \Delta(\Phi_2^{(n)}-\Phi_2)(\Phi_2^{(n)}-\Phi)dx\\
&\qquad = \int_\Omega \nabla(\Phi_2^{(n)}-\Phi_2)\cdot\nabla (\Phi_2^{(n)}-\Phi)dx - \int_{\partial \Omega} \partial_\nu(\Phi_2^{(n)}-\Phi_2)(\Phi_2^{(n)}-\Phi) dS\\
&\qquad = \int_\Omega |\nabla(\Phi_2^{(n)}-\Phi_2)|^2 dx + \tau\int_{\partial \Omega} |\Phi_2^{(n)}-\Phi_2|^2 dS.
\end{align*}
Regarding the second integral on the left-hand-side of \eqref{e3.26}, we have that
\begin{align*}
&- \vartheta\int_\Omega\Delta(\Psi^{(n)}-\Psi)(\Phi_2^{(n)}-\Phi) dx\\
&\qquad = \vartheta\int_\Omega \nabla(\Psi^{(n)}-\Psi)\cdot\nabla (\Phi_2^{(n)}-\Phi)dx - \vartheta\int_{\partial \Omega} \partial_\nu(\Psi^{(n)}-\Psi)(\Phi_2^{(n)}-\Phi) dS\\
&\qquad = -\vartheta\int_\Omega (\Psi^{(n)}-\Psi)\Delta(\Phi_2^{(n)}-\Phi_2) dx + \vartheta\int_{\partial \Omega} (\Psi^{(n)}-\Psi)\partial_\nu(\Phi_2^{(n)}-\Phi) dS \\
&\qquad\qquad- \vartheta\int_{\partial \Omega} \partial_\nu(\Psi^{(n)}-\Psi)(\Phi_2^{(n)}-\Phi) dS\\
&\qquad = -\vartheta\int_\Omega (\Psi^{(n)}-\Psi)\Delta(\Phi_2^{(n)}-\Phi_2) dx + \vartheta\int_{\partial \Omega} (\Psi^{(n)}-\Psi)(-\tau)(\Phi_2^{(n)}-\Phi) dS \\
&\qquad\qquad- \vartheta\int_{\partial \Omega} (-\tau)(\Psi^{(n)}-\Psi)(\Phi_2^{(n)}-\Phi) dS\\
&\qquad=-\vartheta\int_\Omega (\Phi_2^{(n)}-\Phi_2)\Delta(\Phi_2^{(n)}-\Phi_2) dx\\
&\qquad=\vartheta\int_\Omega |\Delta(\Phi_2^{(n)}-\Phi_2)|^2 dx.
\end{align*}
Gathering these two equalities in \eqref{e3.26}, and using Young's inequality with $\varepsilon$, we have
\begin{align*}
\int_\Omega &\left(|\nabla(\Phi_2^{(n)}-\Phi_2)|^2+\vartheta|\Delta(\Phi_2^{(n)}-\Phi_2)|^2\right) dx + \tau\int_{\partial \Omega} |\Phi_2^{(n)}-\Phi_2|^2 dS \\
&\qquad\qquad\qquad\qquad\qquad\qquad= \int_\Omega\sum_{n=1}^N z_j(c_j^{(n)}-c_j)(\Phi_2^{(n)}-\Phi_2)dx.\\
&\qquad\qquad\qquad\qquad\qquad\qquad\le \varepsilon \|\Phi_2^{(n)}-\Phi_2\|_{L^2(\Omega)}^2 + C_\varepsilon \sum_{k=1}^N\|c_j^{(n)}-c_j\|_{L^2(\Omega)}^2,
\end{align*}
and, since $\|\cdot\|_{L^2(\partial\Omega)}+\| \nabla \cdot\|_{L^2(\Omega)}$ is an equivalent norm on $H^1(\Omega)$, due to Poincaré's inequality with boundary term (see e.g. \cite{BL,MV}), we may choose $\varepsilon>0$ small enough to conclude that 
\begin{multline}\label{e3.27}
\int_\Omega \left(|\nabla(\Phi_2^{(n)}-\Phi_2)|^2+\vartheta|\Delta(\Phi_2^{(n)}-\Phi_2)|^2\right) dx + \tau\int_{\partial \Omega} |\Phi_2^{(n)}-\Phi_2|^2 dS \\
\le C\sum_{k=1}^N\|c_j^{(n)}-c_j\|_{L^2(\Omega)}^2.
\end{multline}
In particular, $$\|\nabla(\Phi^{(n)}-\Phi)\|_{L^4(\Omega)}^2=\|\nabla(\Phi_2^{(n)}-\Phi_2)\|_{L^4(\Omega)}^2\le C\sum_{k=1}^N\|c_j^{(n)}-c_j\|_{L^2(\Omega)}^2.$$
Substituting this last estimate in \eqref{e3.25} we see that
\begin{multline}
\frac{1}{2}\frac{d}{dt}\sum_{j=1}^N\int_\Omega (c_j^{(n)}-c_j)^2 dx + \frac{1}{2}\sum_{j=1}^N\int_\Omega |\nabla(c_j^{(n)}-c_j)|^2dx \\
\le C(1+\|c_j\|_{H^1(\Omega)}^2)\sum_{j=1}^N\int_\Omega (c_j^{(n)}-c_j)^2 dx
+C\|c_j\|_{H^1(\Omega)}^2\|u_n-u\|_{L^4(\Omega)}^2.
\end{multline}
Hence, since we already know that $\int_0^T\|c_j(t)\|_{H^1(\Omega)}^2 dt$ is bounded, we may use Gronwall's inequality to conclude that $c_j^{(n)}\to c_j$ in $Z_c$, if $u_n\to u$ in $C([0,T];C_0^2(\overline{\Omega}))$; which by \eqref{e3.27}, readily implies that $\Phi^{(n)}\to \Phi$ in $Z_\Phi$.
\end{proof}


\subsection{Galerkin approximations for the regularized system}\label{S:3.3}

We now proceed with the proof of Proposition~\ref{P:3.1}. Following the ideas from \cite{F}, with the two preliminary results above, we may apply the Faedo-Galerkin method in order to find solutions to \eqref{e3.1}-\eqref{e3.11}. 

For $n\in\mathbb{N}$, let $X_n\subseteq L^2(\Omega)$ be defined as
\[
X_n:=[{\rm span}\,\{\eta_j\}_{j=1}^n]^3,
\]
where $\{\eta_1, \eta_2,... \}$  is the complete collection of normalized eigenvectors of the Laplacian with homogeneous Dirichlet boundary condition in $\Omega$.
 For each $n\in\mathbb{N}$, we look for a function $u_n\in C([0,T];X_n)$ that satisfies \eqref{e3.2} in an approximate way. More precisely, we demand that $u_n$ satisfies
\begin{multline}\label{e3.29}
\int_\Omega \partial_t(\rho_n u_n)\cdot\eta\, dx \\
    +\int_0^t\int_\Omega\Big(\div (\rho_n u_n\otimes u_n)+\nabla(a\rho_n^\gamma + \delta\rho_n^\beta)+\vartheta\nabla u_n\cdot\nabla\rho_n\Big)\cdot\eta \, dx\, ds\\
    =\int_0^t\int_\Omega\Big( \mu\Delta u_n + (\lambda+\mu)\nabla(\div u_n)-\sum_{j=1}^N\nabla c_j^{(n)} -\sum_{j=1}^Nz_j c_j\nabla\Phi^{(n)}  \Big)\cdot \eta dx\, ds,
\end{multline}
for any $t\in[0,T]$ and $\eta\in X_n$, where $\rho_n=\rho[u_n]$, $c_j^{(n)}=c_j[u_n]$, $j=1,...,N$, and $\Phi^{(n)}=\Phi[u_n]$ are the solutions to \eqref{e3.16} and \eqref{e3.NP}-\eqref{e3.P}, associated to the velocity field $u_n$, given by Lemmas \ref{L:3.2} and \ref{l3.PNP[u]}, respectively. Moreover, 
\begin{equation}
\int_\Omega \rho_n u_n(0)\cdot \eta dx=\int_\Omega m_0\cdot\eta dx, \qquad \eta\in X_n.
\end{equation}

Now, the momentum equation in the sense of \eqref{e3.29} can be solved locally in time by means of Schauder's fixed point theorem (see e.g. section~7.2 of \cite{F}). In order to obtain global solutions, it suffices to obtain uniform bounds independent of time on the solutions, which allows the iteration of the fixed point argument to extend the local solution $u_n$ to any given time interval $[0,T]$.
Taking $\eta=u_n$ as a test function in \eqref{e3.29} and proceeding in a standard way (cf. \cite{F}) we have 
\begin{multline}\label{e3.31}
\frac{d}{dt}\int_\Omega \left( \rho_n\left(\frac{1}{2} |u_n|^2 + \frac{a}{\gamma-1}\rho_n^{\gamma-1}+\frac{\delta}{\beta-1}\rho_n^{\beta-1}\right) \right)dx \\
+ \int_\Omega\left(\mu|\nabla u_n|^2 + (\lambda+\mu)(\div u_n)^2 \right) dx +\vartheta\int_\Omega (a\gamma \rho_n^{\gamma-2}+\delta \beta \rho_n^{\beta-2} )|\nabla\rho_n|^2 dx \\
=\int_\Omega u_n\cdot\Bigg( -\sum_{j=1}^N\nabla c_j^{(n)} - \sum_{j=1}^Nz_j c_j^{(n)}\nabla\Phi^{(n)}\Bigg) dx.
\end{multline}

In order to evaluate the right hand side of \eqref{e3.31}, we multiply \eqref{e3.NP} by $\sigma'( c_j^{(n)}) + z_j\Phi^{(n)}$, where $\sigma(s)=s\ln(s)-s+1$, and integrate to obtain
\begin{multline}\label{e3.32}
\frac{d}{dt}\int_\Omega \sigma(c_j^{(n)}) dx + \int_\Omega (z_j c_j^{(n)})_t\Phi^{(n)} dx + \int_\Omega c_j^{(n)}\Big|\frac{\nabla c_j^{(n)}}{c_j^{(n)}}+z_j\nabla\Phi^{(n)}\Big|^2 dx\\
	=\int_\Omega u_n\cdot \left(\nabla c_j^{(n)}+z_jc_j^{(n)}\nabla \Phi^{(n)}\right) dx.
\end{multline}

Decomposing $\Phi^{(n)}=\Phi_1 + \Phi_2^{(n)}$, as before, where $\Phi_1$ is the solution of $\eqref{e3.0}$ we have that
\[
\int_\Omega(\sum_{j=1}^N z_j c_j^{(n)})_t\Phi^{(n)} dx = \int_\Omega(\sum_{j=1}^N z_j c_j^{(n)})_t\Phi_1 dx + \int_\Omega(\sum_{j=1}^N z_j c_j^{(n)})_t\Phi_2^{(n)} dx.
\]
Now, on the one hand, proceeding similarly as in the previous subsection, we have that
\[
\int_\Omega(\sum z_j c_j^{(n)})_t\Phi_2^{(n)} dx = \frac{1}{2}\frac{d}{dt}\left[\int_\Omega \left(  |\nabla \Phi_2^{(n)}|^2 + \vartheta|\Delta\Phi_2^{(n)}|^2 \right) dx + \tau\int_{\partial\Omega}|\Phi_2^{(n)}|^2 dS\right].
\]
On the other hand, since $\Phi_1$ is independent of $t$, we see that
\[
\int_\Omega(\sum z_j c_j^{(n)})_t\Phi_1 dx = \frac{d}{dt}\int_\Omega (\sum_{j=1}^N z_jc_j^{(n)})\Phi_1 dx.
\]
Thus, taking the sum over $j=1,...,N$ in \eqref{e3.32} and replacing the last two identities yields
\begin{multline}\label{e3.32'}
\frac{d}{dt}\Bigg[\int_\Omega \Big(\sum_{j=1}^N\sigma(c_j^{(n)}) + \frac{1}{2}|\nabla\Phi_2^{(n)}|^2+\frac{\vartheta}{2}|\Delta\Phi_2^{(n)}|^2\Big) dx + \frac{\tau}{2}\int_{\partial\Omega}|\Phi_2^{(n)}|^2 dS  \Bigg]\\
+ \int_\Omega \sum_{j=1}^N c_j^{(n)}\Big|\frac{\nabla c_j^{(n)}}{c_j^{(n)}}+z_j\nabla\Phi^{(n)}\Big|^2 dx\\
	=-\int_\Omega u_n\cdot \Bigg(\sum_{j=1}^N\nabla c_j^{(n)}+\sum_{j=1}^Nz_jc_j^{(n)}\nabla \Phi^{(n)}\Bigg) dx - \frac{d}{dt}\int_\Omega (\sum_{j=1}^N z_jc_j^{(n)})\Phi_1 dx.
\end{multline}
Finally adding the resulting equation to \eqref{e3.31} we obtain the following energy identity
\begin{multline}\label{e3.33}
\frac{d}{dt}\Bigg[\int_\Omega \left( \rho_n\left(\frac{1}{2} |u_n|^2 + \frac{a}{\gamma-1}\rho_n^{\gamma-1}+\frac{\delta}{\beta-1}\rho_n^{\beta-1}\right) \right)dx \\
+ \int_\Omega \Big( \sum_{j=1}^N\sigma(c_j^{(n)})  + \frac{1}{2}|\nabla\Phi_2^{(n)}|^2 +\frac{\vartheta}{2}|\Delta\Phi_2^{(n)}|^2  \Big)dx  + \frac{\tau}{2}\int_{\partial\Omega}|\Phi_2^{(n)}|^2 dS \Bigg]\\
+ \int_\Omega\left(\mu|\nabla u_n|^2 + (\lambda+\mu)(\div u_n)^2 \right) dx +\vartheta\int_\Omega (a\gamma \rho_n^{\gamma-2}+\delta \beta \rho_n^{\beta-2} )|\nabla\rho_n|^2 dx \\
+\int_\Omega \sum_{j=1}^Nc_j^{(n)}\Big|\frac{\nabla c_j^{(n)}}{c_j^{(n)}}+z_j\nabla\Phi^{(n)}\Big|^2 dx=-\frac{d}{dt}\int_\Omega (\sum_{j=1}^Nz_j c_j^{(n)})\Phi_1 dx.
\end{multline}

At this point, we note that 
\begin{equation}\label{e3.mass_n}
\int_\Omega c_j^{(n)}(t,x)dx=\int_\Omega c_j^0(x)dx,
\end{equation}
due to the boundary conditions in \eqref{e3.NP} and consequently
\[
\left|\int_\Omega (\sum_{j=1}^Nz_j c_j^{(n)})\Phi_1 dx\right| \le N\|\Phi_1\|_{L^\infty(\Omega)}\max_j |z_j|\, \|c_j^0\|_{L^1(\Omega)}.
\]
Hence, integrating over $t\in [0,T]$ in \eqref{e3.33} we obtain, in particular, a uniform (in $n$ and $T$) bound for $u_n$ in $L^2(0,T;H_0^1(\Omega))$, which as in \cite{F}, is enough to guarantee the global existence  of solutions to the approximate momentum equation in sense of \eqref{e3.29}.

\subsection{Energy estimates}

The convergence of the Galerkin approximations to a solution of system \eqref{e3.1}-\eqref{e3.12} will rely on a few uniform estimates, which are based on the energy identity \eqref{e3.33}. More precisely, we have the following.

\begin{lemma}\label{l3.4}
Let $(\rho_n,u_n, c_j^{(n)}, \Phi^{(n)})$ be the Galerkin approximations constructed above, for each $n\in\mathbb{N}$, and let $E_{\vartheta,\delta}^{(n)}(t)$ be given by \eqref{e3.energy-aprox} with $(\rho,u, c_j, \Phi)=(\rho_n,u_n, c_j^{(n)}, \Phi^{(n)})$, i.e.
\begin{multline*}
E_{\vartheta,\delta}^{(n)}(t)=\int_\Omega\Bigg( \rho_n\left(\frac{1}{2} |u_n|^2 + \frac{a}{\gamma-1}\rho_n^{\gamma-1} + \frac{\delta}{\beta-1}\rho_n^{\beta-1}\right) + \frac{\vartheta}{2} |\Delta\Phi_2^{(n)}|^2 	\\
+ \sum_{j=1}^N(c_j^{(n)}\ln c_j^{(n)} - c_j^{(n)} + 1) + \frac{1}{2}|\nabla\Phi_2^{(n)}|^2+(\sum_{j=1}^Nz_jc_j^{(n)})\Phi_1^{(n)} \Bigg)dx 
+\frac{\tau}{2}\int_{\partial\Omega}|\Phi_2^{(n)}|^2dS.
\end{multline*}
 Then,
\begin{multline}\label{e3.35}
E_{\vartheta,\delta}^{(n)}(t) 
+ \int_0^t\int_\Omega \Big(\mu|\nabla u_n|^2 + (\lambda+\mu)(\div u_n)^2 + \sum_{j=1}^Nc_j^{(n)}\Big|\frac{\nabla c_j^{(n)}}{c_j^{(n)}}+ z_j\nabla\Phi^{(n)}\Big|^2\Big)dx\, ds\\
+\vartheta\int_0^t\int_\Omega(a\gamma\rho_n^{\gamma-2}+\delta\beta\rho_n^{\beta-2} )|\nabla\rho_n|^2dx\, ds= E_{\delta}^{(n)}(0).
\end{multline}
Also, there is a positive constant $C$, which depends only on $E_\delta^{(n)}(0)$, but is otherwise independent of $n$, $\vartheta$ and $\delta$, such that
\begin{equation}\label{e3.36}
\int_0^T\int_\Omega\left( \sum_{j=1}^N\frac{|\nabla c_j^{(n)}|^2}{c_j^{(n)}}+c_j^{(n)}|\nabla \Phi^{(n)}|^2+|\Delta \Phi^{(n)}|^2 \right)dx\, dt\le C.
\end{equation}
Moreover,
\begin{equation}\label{e3.psi}
\vartheta\int_0^T\int_\Omega |\nabla\Delta\Phi_2^{(n)}|^2dx\, dt+\tau\vartheta\int_0^T\int_{\partial\Omega} |\Delta\Phi_2^{(n)}|^2 dS\, dt\le C.
\end{equation}
\end{lemma}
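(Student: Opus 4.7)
The identity \eqref{e3.35} is essentially a reformulation of the differential balance \eqref{e3.33}. Since $\Phi_1$ is independent of $t$, the right-hand side of \eqref{e3.33} is itself a total time derivative; moving it to the left groups the terms into precisely $\tfrac{d}{dt}E_{\vartheta,\delta}^{(n)}(t)$, and integration over $[0,t]$ yields the stated identity.

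To obtain \eqref{e3.36}, the idea is to extract three nonnegative contributions from the ion-dissipation integrand by expanding
\begin{equation*}
\sum_j c_j^{(n)}\Bigl|\tfrac{\nabla c_j^{(n)}}{c_j^{(n)}} + z_j\nabla\Phi^{(n)}\Bigr|^2 = \sum_j \tfrac{|\nabla c_j^{(n)}|^2}{c_j^{(n)}} + 2\,\nabla\Bigl(\textstyle\sum_j z_j c_j^{(n)}\Bigr)\!\cdot\!\nabla\Phi^{(n)} + \sum_j z_j^2\, c_j^{(n)}|\nabla\Phi^{(n)}|^2.
\end{equation*}
The cross term is then treated by integration by parts, exploiting the regularized Poisson system $-\Delta\Phi^{(n)}=\Psi^{(n)}$ and $(1-\vartheta\Delta)\Psi^{(n)}=\sum_j z_j c_j^{(n)}$ under the Robin boundary conditions \eqref{e3.11}--\eqref{e3.12}. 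Mimicking the manipulation performed for $I_1+I_2$ in the proof of Lemma~\ref{l3.PNP[u]} produces
\begin{equation*}
\int_\Omega \nabla\Bigl(\textstyle\sum_j z_j c_j^{(n)}\Bigr)\!\cdot\!\nabla\Phi^{(n)}\,dx = \int_\Omega|\Delta\Phi^{(n)}|^2\,dx + \vartheta\int_\Omega|\nabla\Psi^{(n)}|^2\,dx + \vartheta\tau\int_{\partial\Omega}|\Psi^{(n)}|^2\,dS + \mathcal{R}(t),
\end{equation*}
where $\mathcal{R}(t):=\int_{\partial\Omega}(\sum_j z_j c_j^{(n)})(V-\tau\Phi^{(n)})\,dS$. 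Provided $\int_0^T|\mathcal{R}(t)|\,dt$ is bounded uniformly in $n,\vartheta,\delta$, combining with \eqref{e3.35} and isolating the three nonnegative terms in the above expansion immediately gives \eqref{e3.36}.

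For \eqref{e3.psi}, the pointwise identity $-\Delta\Phi_2^{(n)}=\Psi^{(n)}$ in $\Omega$ gives $\nabla\Delta\Phi_2^{(n)}=-\nabla\Psi^{(n)}$ inside $\Omega$ and $\Delta\Phi_2^{(n)}=-\Psi^{(n)}$ on $\partial\Omega$. Consequently the two integrals in \eqref{e3.psi} coincide with $\vartheta\int_0^T\!\int_\Omega|\nabla\Psi^{(n)}|^2\,dx\,dt$ and $\tau\vartheta\int_0^T\!\int_{\partial\Omega}|\Psi^{(n)}|^2\,dS\,dt$, both of which were already controlled in the previous step.

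The main technical obstacle is the uniform (in $n,\vartheta,\delta$) control of $\mathcal{R}(t)$. A naive trace bound using $\sum_j z_j c_j^{(n)}=(1-\vartheta\Delta)\Psi^{(n)}$ is inadmissible because it carries a factor of $\vartheta\Delta\Psi^{(n)}$ whose norm need not remain $\vartheta$-uniform. The right approach is to estimate $\mathcal{R}$ directly from the trace of $c_j^{(n)}$ (controlled by the $L^2(0,T;H^1(\Omega))$ bound on $\sqrt{c_j^{(n)}}$ inherited from \eqref{e3.35}, together with conservation of mass \eqref{e3.mass_n}) and the uniform $L^\infty(0,T;H^{1/2}(\partial\Omega))$ trace of $\Phi^{(n)}$ provided by its $L^\infty(0,T;H^1(\Omega))$ bound; the Poincar\'e-type inequality with boundary term recalled in Lemma~\ref{l3.PNP[u]} then closes the argument without introducing any $\vartheta$-dependence.
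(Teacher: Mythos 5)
Your proposal follows essentially the same route as the paper: \eqref{e3.35} is obtained by integrating \eqref{e3.33}; \eqref{e3.36} and \eqref{e3.psi} follow from expanding the ion--dissipation integrand into $|\nabla c_j|^2/c_j$, a cross term, and $z_j^2 c_j|\nabla\Phi|^2$, integrating the cross term by parts, using the regularized Poisson system \eqref{e3.5}--\eqref{e3.5'} with the Robin conditions to turn $-\int (\sum_j z_jc_j)\Delta\Phi$ into the three nonnegative quantities $\int|\Psi|^2$, $\vartheta\int|\nabla\Psi|^2$, $\vartheta\tau\int_{\partial\Omega}|\Psi|^2$, and controlling the boundary remainder via trace inequalities. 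The only presentational difference is that you work with the full $\Phi$ and the inhomogeneous Robin condition $\partial_\nu\Phi=V-\tau\Phi$, while the paper works with $\Phi_2$ and the homogeneous condition $\partial_\nu\Phi_2=-\tau\Phi_2$ after splitting $\Phi=\Phi_1+\Phi_2$; these are equivalent.

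One point deserves a correction, as written it is circular. You claim to control $\int_0^T|\mathcal{R}(t)|\,dt$ using the ``$L^2(0,T;H^1(\Omega))$ bound on $\sqrt{c_j^{(n)}}$ inherited from \eqref{e3.35}.'' But \eqref{e3.35} only bounds the \emph{combined} dissipation $\int_0^T\!\int_\Omega c_j^{(n)}|\nabla c_j^{(n)}/c_j^{(n)}+z_j\nabla\Phi^{(n)}|^2$; the individual term $\int|\nabla\sqrt{c_j^{(n)}}|^2$ is precisely what becomes available only \emph{after} the cross term has been controlled, i.e.\ only after $\mathcal{R}$ itself has been handled. You cannot first establish $\int_0^T|\mathcal{R}|\le C$ as a standalone estimate and then combine with \eqref{e3.35}. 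The paper's resolution is an $\varepsilon$-absorption: the boundary trace of $c_j^{(n)}$ is estimated via the compact embedding $H^1(\Omega)\hookrightarrow L^{8/3}(\partial\Omega)$ as
\[
\|\sqrt{c_j^{(n)}}\|_{L^2(0,T;L^{8/3}(\partial\Omega))}^2 \le \varepsilon\,\|\nabla\sqrt{c_j^{(n)}}\|_{L^2((0,T)\times\Omega)}^2 + C_\varepsilon\,\|\sqrt{c_j^{(n)}}\|_{L^2(0,T;L^2(\Omega))}^2,
\]
and then the $\varepsilon\|\nabla\sqrt{c_j^{(n)}}\|^2$ piece is absorbed into the $4\|\nabla\sqrt{c_j^{(n)}}\|^2$ coming from the expansion, while the second piece is controlled by mass conservation \eqref{e3.mass_n}. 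You should state the argument in this absorbed form rather than as a two-step ``first bound $\mathcal{R}$, then conclude.''
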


\begin{remark}\label{r3.5}
Note that \eqref{e3.36} implies that the $\sqrt{c_j^{(n)}}$, $n=1,2,...$, are bounded in $L^2(0,T;H^1(\Omega))$. We also point out that the constant $C$ on the right-hand-side of \eqref{e3.36} is independent of $n$, $\vartheta$ and $\delta$ as long as $E_\delta^{(n)}(0)$ is.
\end{remark}

\begin{proof}[Proof of Lemma~\ref{l3.4}]
The energy identity \eqref{e3.35} follows directly from \eqref{e3.33} upon integrating over $[0,t]$. 

Regarding \eqref{e3.36}, we see that, since $\Phi=\Phi_1+\Phi_2$, where $\Phi_1$ is given by \eqref{e3.0}, it suffices to show that
\begin{equation}\label{e3.36'}
\int_0^T\int_\Omega\left( \sum_{j=1}^N\frac{|\nabla c_j|^2}{c_j}+c_j|\nabla \Phi_2|^2+|\Delta \Phi_2|^2 \right)dx\, dt\le C.
\end{equation}
As pointed out before, the mass conservation \eqref{e3.mass_n} implies that 
\[
\left|\int_\Omega (\sum_{j=1}^Nz_j c_j^{(n)})\Phi_1 dx\right| \le N\|\Phi_1\|_{L^\infty}\max_j |z_j|\, \|c_j^0\|_{L^1(\Omega)}.
\]
Therefore, from \eqref{e3.35} we have, in particular, that
\[
\int_0^T\int_\Omega \sum_{j=1}^Nc_j^{(n)}\Big|\frac{\nabla c_j^{(n)}}{c_j^{(n)}}+ z_j\nabla\Phi^{(n)}\Big|^2dx\, ds \le C,
\]
for some positive constant $C$ independent of $n$, $\vartheta$ and $\delta$.
Then, noting that $$\frac{|\nabla c_j^{(n)}|^2}{c_j^{(n)}}=4\Big|\nabla\sqrt{c_j^{(n)}}\Big|^2$$ and using once again the smoothness of $\Phi_1$, we have 
\begin{equation}\label{e3.38}
\int_0^T\int_\Omega \sum_{j=1}^N\Big( 4\Big|\nabla\sqrt{c_j^{(n)}}\Big|^2+ 2z_j\nabla c_j^{(n)}\cdot\nabla\Phi_2^{(n)}+ z_j^2c_j^{(n)}|\nabla\Phi_2^{(n)}|^2\Big)dx\, ds \le C.
\end{equation}
Thus, it is clear that we only have to deal with the second term in \eqref{e3.38}, which we estimate as follows.

Integrating by parts, we have
\begin{align}\label{e3.40}
\int_0^T\int_\Omega\sum_{j=1}^N & 2z_j\nabla c_j^{(n)}\cdot\nabla\Phi^{(n)} dx\, dt \nonumber\\
&= -2\int_0^T\int_\Omega\sum_{j=1}^N z_jc_j^{(n)}\Delta \Phi^{(n)}dx\, dt	+2\sum_{j=1}^N\int_0^T\int_{\partial\Omega}z_jc_j\partial_\nu\Phi_2 dS\, dt\nonumber\\
&=: -2I_0 + 2\sum_{j=1}^N I_j.
\end{align}
Using the boundary conditions in \eqref{e3.P}, for each $j=1,...,n$, we have that
\begin{align*}
I_j&= -\tau z_j\int_0^T\int_{\partial\Omega}c_j^{(n)}\Phi_2^{(n)} dS\, dt\\
 &\ge -C\|c_j^{(n)}\|_{L^1(0,T;L^{4/3}(\partial\Omega))}\|\Phi_2^{(n)}\|_{L^\infty(0,T;L^4(\partial\Omega))}\\
 &=-C\|\sqrt{c_j^{(n)}}\|_{L^2(0,T;L^{8/3}(\partial\Omega))}^2\|\Phi_2^{(n)}\|_{L^\infty(0,T;L^4(\partial\Omega))}.
\end{align*}
We now invoke once again Poincaré's inequality with boundary term (that is, the fact that $\|\cdot\|_{L^2(\partial\Omega)}+\| \nabla \cdot\|_{L^2(\Omega)}$ is an equivalent norm on $H^1(\Omega)$, see e.g. \cite{BL,MV}) to conclude from \eqref{e3.35} that $\|\Phi_2^{(n)}\|_{L^\infty(0,T;H^1(\Omega))}^2$, and therefore also $\|\Phi_2^{(n)}\|_{L^\infty(0,T;L^4(\partial\Omega))}$, is bounded by a constant independent of $n$, $\vartheta$ and $\delta$.
We also use the compactness of the embedding $H^1(\Omega)\hookrightarrow L^{8/3}(\partial\Omega)$ to estimate for a given $\varepsilon>0$
\[
\|\sqrt{c_j^{(n)}}\|_{L^2(0,T;L^{8/3}(\partial\Omega))}^2\le \varepsilon \|\nabla\sqrt{c_j^{(n)}}\|_{L^2(0,T;L^2(\Omega))}^2 + C_\varepsilon \|\sqrt{c_j^{(n)}}\|_{L^2(0,T;L^2(\Omega))}.
\] 
Then, choosing $\varepsilon>0$ small enough and using the conservation of mass \eqref{e3.mass_n}, we have
\begin{equation}\label{e3.41}
I_j\ge -\|\nabla\sqrt{c_j^{(n)}}\|_{L^2(0,T;L^2(\Omega))}^2 - C, \quad j=1,...,N.
\end{equation}

Regarding $I_0$ we use \eqref{e3.P} and integration by parts to obtain
\begin{align*}
I_0 &= -\int_0^T\int_\Omega \left((1-\vartheta\Delta)\Psi^{(n)}\right)\Psi^{(n)} dx\, dt\\
	&=-\int_0^T\int_\Omega|\Psi^{(n)}|^2dx\, dt - \vartheta\int_0^T\int_\Omega|\nabla\Psi^{(n)}|^2dx\, dt + \vartheta\int_0^T\int_{\partial\Omega} (\partial_\nu\Psi^{(n)})\Psi^{(n)} dS\, dt.
\end{align*}
Thus, using the boundary conditions for $\Psi^{(n)}$ we obtain
\begin{equation}\label{e3.42}
I_0=-\int_0^T\int_\Omega|\Psi^{(n)}|^2dx\, dt - \vartheta\int_0^T\int_\Omega|\nabla\Psi^{(n)}|^2dx\, dt - \tau\vartheta\int_0^T\int_\Omega |\Psi^{(n)}|^2 dS\, dt.
\end{equation}

Finally, gathering \eqref{e3.40}, \eqref{e3.41} and \eqref{e3.42} in \eqref{e3.38} we arrive at
\begin{multline}
\int_0^T\int_\Omega \sum_{j=1}^N\Big( 2\Big|\nabla\sqrt{c_j^{(n)}}\Big|^2+  z_j^2c_j^{(n)}|\nabla\Phi_2^{(n)}|^2\Big)dx\, ds \\
+ 2\int_0^T\int_\Omega|\Psi^{(n)}|^2dx\, dt + 2\vartheta\int_0^T\int_\Omega|\nabla\Psi^{(n)}|^2dx\, dt + 2\tau\vartheta\int_0^T\int_{\partial\Omega} |\Psi^{(n)}|^2 dS\, dt\le C,
\end{multline}
which readily implies \eqref{e3.36} and \eqref{e3.psi}.
\end{proof}

\subsection{Convergence of the Galerkin approximations}\label{S3.5}

Now, we want to take the limit as $n\to \infty$ in the sequence of Galerkin approximation $(\rho_n,u_n, c_j^{(n)}, \Phi^{(n)})$. We point out that the functions $u_n$ and $\rho_n$ may be treated exactly as in \cite[Section 7.3.6]{F}. Therefore, we focus only on the convergence of $(c_j^{(n)}, \Phi^{(n)})$, which follows from the following general result, regarding the weak stability of solutions of the regularized PNP system \eqref{e3.NP}-\eqref{e3.P}, combined with the energy estimates from Lemma~\ref{l3.4}.

\begin{lemma}\label{l3.6}
Let	$\{ u_n\}_{n\in\mathbb{N}}$ be a sequence in $L^2(0,T;H_0^1(\Omega))$ and let $(c^{(n)},\Phi^{(n)})$ be a solution of the (regularized) PNP system \eqref{e3.NP}-\eqref{e3.P} with $u=u_n$ and $\vartheta\ge 0$. 
Suppose that
\begin{equation}\label{e3.est}
\begin{cases}
\sqrt{c_j^{(n)}} \text{ is bounded in $L^2(0,T;H^1(\Omega))$,}\\
\Phi^{(n)} \text{ is bounded in $L^\infty(0,T;H^1(\Omega))$,}\\
\sqrt{c_j^{(n)}}\nabla\Phi^{(n)} \text{ is bounded in $L^2((0,T)\times\Omega)$.}
\end{cases}
\end{equation}
Assume also that
\begin{equation}\label{e3.estu}
u_n\rightharpoonup u \text{ weakly in $L^2(0,T;H_0^1(\Omega))$.}
\end{equation}
Then, there are $c_j\in L^\infty(0,T;L^1(\Omega))\cap L^1(0,T; W^{1,3/2}(\Omega))$, $j=1,...,N$, $\Phi\in L^\infty(0,T;H^1(\Omega))\cap C([0,T];L^p(\Omega))$, for all $p\in [1,6)$, and a subsequence of $(c^{(n)},\Phi^{(n)})$ (not relabeled) such that
\begin{align*}
&c_j^{(n)}\to c_j  \text{ strongly in $L^1(0,T;L^p(\Omega))$ for $1\le p< 3$,}\\
&\nabla c_j^{(n)}\rightharpoonup \nabla c_j \text{ weakly in $L^2(0,T;L^1(\Omega))\cap L^1(0,T;L^q(\Omega))$, for $1\le q< 3/2$,}\\
&\nabla \Phi^{(n)}\rightharpoonup \nabla \Phi \text{ weakly-* in $L^\infty(0,T; L^2(\Omega))$,}\\
&\Phi^{(n)}\to \Phi \text{ strongly in $C([0,T];L^p(\Omega))$ for $1\le p < 6$.}
\end{align*}
Moreover, there are $r_1,r_2 >1$ such that 
\begin{align*}
&c_j^{(n)}\nabla\Phi^{(n)}\rightharpoonup c_j\nabla\Phi \text{ weakly in $L^{r_1}((0,T)\times\Omega)$,}\\
&c_j^{(n)}u_n\rightharpoonup c_j u \text{ weakly in $L^{r_2}((0,T)\times\Omega)$,}
\end{align*}
and the limit functions $u$, $(c_1,...,c_N,\Phi)$ are a weak solution of \eqref{e3.NP}-\eqref{e3.P}.
\end{lemma}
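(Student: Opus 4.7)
The overall strategy is to combine the uniform bounds in \eqref{e3.est} with the equations \eqref{e3.NP}--\eqref{e3.P} to produce a time-compactness estimate for $c_j^{(n)}$, then to transfer this to $\Phi^{(n)}$ through elliptic regularity, and finally to identify the weak limits of the nonlinear products via mixed strong/weak convergence before passing to the limit in the weak formulation \eqref{e3.NPweak}.

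First, I would unpack the \textit{a priori} bounds implicit in \eqref{e3.est}. Since mass conservation (coming from the blocking boundary condition) forces $\sqrt{c_j^{(n)}}$ to be bounded in $L^\infty(0,T;L^2(\Omega))$, and the first line of \eqref{e3.est} gives it in $L^2(0,T;H^1(\Omega))$, Sobolev embedding yields $c_j^{(n)}$ bounded in $L^\infty(0,T;L^1(\Omega))\cap L^1(0,T;L^3(\Omega))$. The identity $\nabla c_j^{(n)}=2\sqrt{c_j^{(n)}}\,\nabla\sqrt{c_j^{(n)}}$ combined with H\"older gives $\nabla c_j^{(n)}$ bounded in $L^2(0,T;L^1(\Omega))\cap L^1(0,T;L^{3/2}(\Omega))$, so that $c_j^{(n)}$ is bounded in $L^1(0,T;W^{1,3/2}(\Omega))$. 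Writing $c_j^{(n)}\nabla\Phi^{(n)}=\sqrt{c_j^{(n)}}\cdot(\sqrt{c_j^{(n)}}\nabla\Phi^{(n)})$ and invoking the third line of \eqref{e3.est} yields a bound in $L^1(0,T;L^{3/2}(\Omega))$, while $c_j^{(n)}u_n$ lies in $L^{r_2}((0,T)\times\Omega)$ for some $r_2>1$ via \eqref{e3.estu} together with Sobolev. These bounds immediately produce weakly convergent subsequences after a standard diagonal extraction.

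Next, I would establish the strong convergence of $c_j^{(n)}$ via the Aubin--Lions--Simon lemma. Rewriting \eqref{e3.NP} as $\partial_t c_j^{(n)}=\div\bigl(-c_j^{(n)}u_n+\nabla c_j^{(n)}+z_jc_j^{(n)}\nabla\Phi^{(n)}\bigr)$ and combining the flux estimates of the previous paragraph shows that $\partial_t c_j^{(n)}$ is bounded in $L^1(0,T;W^{-1,q}(\Omega))$ for a suitable $q>1$. Together with the bound on $c_j^{(n)}$ in $L^1(0,T;W^{1,3/2}(\Omega))$ and the compact embedding $W^{1,3/2}(\Omega)\hookrightarrow\hookrightarrow L^p(\Omega)$ for every $p<3$, Simon's theorem yields $c_j^{(n)}\to c_j$ strongly in $L^1(0,T;L^p(\Omega))$ for every $p<3$. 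For $\Phi^{(n)}$, I would differentiate the elliptic system \eqref{e3.P} in time and invert it (with the Robin boundary conditions and when $\vartheta\ge 0$ the operator is uniformly coercive) to transfer the bound on $\partial_t c_j^{(n)}$ into a bound on $\partial_t\Phi_2^{(n)}$ in a suitable negative Sobolev space. Together with the uniform $L^\infty(0,T;H^1(\Omega))$ bound on $\Phi_2^{(n)}$ and an Arzel\`a--Ascoli argument based on the compact embedding $H^1(\Omega)\hookrightarrow\hookrightarrow L^p(\Omega)$ for $p<6$, this gives $\Phi^{(n)}\to\Phi$ strongly in $C([0,T];L^p(\Omega))$ for every $p<6$.

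Once these compactness results are in hand, the identification of the nonlinear limits is routine: the strong convergence of $c_j^{(n)}$ paired against $u_n\rightharpoonup u$ weakly in $L^2(0,T;L^6(\Omega))$ and $\nabla\Phi^{(n)}\rightharpoonup\nabla\Phi$ weakly-$\ast$ in $L^\infty(0,T;L^2(\Omega))$ yields $c_j^{(n)}u_n\rightharpoonup c_ju$ and $c_j^{(n)}\nabla\Phi^{(n)}\rightharpoonup c_j\nabla\Phi$ in the $L^{r_i}$ spaces identified in the first step, whence one may pass to the limit in \eqref{e3.NPweak} and in the (linear) elliptic system \eqref{e3.P}. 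The hardest step will be the time compactness of $c_j^{(n)}$ at the claimed range $p<3$: the bound $c_j^{(n)}\nabla\Phi^{(n)}\in L^1(0,T;L^{3/2}(\Omega))$ genuinely requires the dissipation estimate $\sqrt{c_j^{(n)}}\nabla\Phi^{(n)}\in L^2$ rather than a naive H\"older bound using only $c_j^{(n)}\in L^1L^3$ and $\nabla\Phi^{(n)}\in L^\infty L^2$, and the convective term $c_j^{(n)}u_n$ must be placed in a mixed Lebesgue space compatible with Simon's framework, which forces one to track several interpolation exponents simultaneously.
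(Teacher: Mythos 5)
Your proof is correct and follows the same high-level strategy as the paper (flux bounds $\to$ time compactness $\to$ identification of nonlinear limits), but it diverges at the crux of the compactness step. You apply Simon's $L^1$-version of the Aubin--Lions theorem directly to $c_j^{(n)}$, using the bound in $L^1(0,T;W^{1,3/2}(\Omega))$ together with $\partial_t c_j^{(n)}$ bounded in $L^1(0,T;W^{-1,q}(\Omega))$ and the compact embedding $W^{1,3/2}\hookrightarrow\hookrightarrow L^p$ for $p<3$. The paper instead works with the shifted square root $\sqrt{c_j^{(n)}+1}$; the shift by $+1$ lets them compute $\partial_t\sqrt{c_j^{(n)}+1}$ explicitly (no vacuum issues) and decompose it into terms bounded in $L^1(0,T;H^{-1}(\Omega))$ plus terms bounded in $L^1((0,T)\times\Omega)$, so that the standard $L^2$-Aubin--Lions lemma in the reflexive Hilbert-space setting gives compactness of $\sqrt{c_j^{(n)}+1}$ in $L^2(0,T;L^p(\Omega))$ for $p<6$, from which compactness of $c_j^{(n)}$ in $L^1(0,T;L^p(\Omega))$, $p<3$, follows by squaring. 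Both routes are valid. The paper's square-root device keeps the time-compactness argument in a reflexive $L^2$-framework where the classical Aubin--Lions lemma applies, and it avoids having to juggle the critical $L^1(0,T;W^{1,3/2})$ exponents; your route is more direct but requires invoking the $L^1$-version of Simon's compactness theorem and careful bookkeeping of the low Lebesgue indices, which you correctly flag as the delicate point. The treatment of $\Phi^{(n)}$ (elliptic regularity pushing $\partial_t c_j^{(n)}$ to $\partial_t\Phi_2^{(n)}$, then Aubin--Lions/Arzel\`a--Ascoli into $C([0,T];L^p)$, $p<6$) and the passage to the limit in the weak formulations are essentially identical to the paper's.
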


\begin{remark}\label{R:3.7}
The result of Lemma~\ref{l3.6} implies Theorem~\ref{T1.2}. Indeed, the proof of Lemma~\ref{l3.6} below does not require $\vartheta$ to be strictly positive and all the arguments hold uniformly with respect to $\vartheta$, as long as the assumptions \eqref{e3.est} and \eqref{e3.estu} are uniform in $\vartheta$. In particular, the same proof may be carried out line by line for the original PNP equations which correspond to \eqref{e3.NP}-\eqref{e3.P} with $\vartheta=0$. This fact will justify the convergence and consistency of the sequence of the approximate ion densities and electrostatic potentials when we consider the limit as the regularizing parameters $\vartheta$ and $\delta$ vanish.

Note, also, that the assumptions \eqref{e3.est} and \eqref{e3.estu} are consistent with the natural energy estimates that arise from the dissipative structure of the equations, discussed in Section~\ref{S:2} (cf. Lemma~\ref{l3.4}).
\end{remark}

\begin{proof}[Proof of Lemma \ref{l3.6}]
First, we note that 
$\sqrt{c_j^{(n)}}$ and hence  $\sqrt{c_j^{(n)}+1}$ is bounded in $L^2(0,T;H^1(\Omega))$. 
Next, we see that
\begin{align*}
2\partial_t\sqrt{c_j^{(n)}+1} &= \frac{\partial_t c_j^{(n)}}{\sqrt{c_j^{(n)}+1}}\\
	&= -\frac{\div (c_j^{(n)} u_n)}{\sqrt{c_j^{(n)}+1}} + \frac{\div\Big( \nabla c_j^{(n)} + z_jc_j^{(n)}\nabla\Phi^{(n)}\Big)}{\sqrt{c_j^{(n)}+1}}\\
	&= -\div\left( \frac{c_j^{(n)} u_n}{\sqrt{c_j^{(n)}+1}} \right) - \frac{c_j^{(n)}u_n\cdot\nabla c_j^{(n)}}{2(c_j^{(n)}+1)^{3/2}}\\
	&\qquad + \div\left( \frac{\nabla c_j^{(n)} + z_j c_j^{(n)}\nabla\Phi^{(n)}}{\sqrt{c_j^{(n)}+1}} \right) +  \frac{\Big(\nabla c_j^{(n)} + z_j c_j^{(n)}\nabla\Phi^{(n)}\Big)\cdot\nabla c_j^{(n)}}{2(c_j^{(n)}+1)^{3/2}}\\
	&=-\div\left(\Bigg(\frac{c_j^{(n)}}{c_j^{(n)}+1}\Bigg)^{1/2} \sqrt{c_j^{(n)}} u_n \right) - \Bigg(\frac{c_j^{(n)}}{(c_j^{(n)}+1)}\Bigg)^{3/2} u_n\cdot\nabla \sqrt{c_j^{(n)}}\\
	&\qquad + \div\left( \Bigg(\frac{c_j^{(n)}}{c_j^{(n)}+1}\Bigg)^{1/2}\left(2\nabla \sqrt{c_j^{(n)}} + z_j \sqrt{c_j^{(n)}}\nabla\Phi^{(n)}\right) \right) \\
	&\qquad+  \frac{c_j^{(n)}}{(c_j^{(n)}+1)^{3/2}}|\nabla \sqrt{c_j^{(n)}}|^2 +  \Bigg(\frac{c_j^{(n)}}{c_j^{(n)}+1}\Bigg)^{3/2}z_j\nabla\sqrt{c_j^{(n)}}\cdot\nabla\Phi^{(n)}\\
	&:=J_1+J_2+J_3+J_4+J_5.
\end{align*}

From \eqref{e3.est}, we readily see that $J_1$ and $J_3$ are bounded in $L^1(0,T;H^{-1}(\Omega))$ and that $J_2$, $J_4$ and $J_5$ are bounded in $L^1((0,T)\times \Omega)$. Thus, using Aubin-Lions lemma we see that $\sqrt{c_j^{(n)}+1}$ is relatively compact in $L^2(0,T;L^p(\Omega))$ for any $1\le p <6$. Then, there is a function $c_j$ such that, up to a subsequence
\begin{equation}\label{e3.45}
c_j^{(n)}\to c_j \text{ in $L^1(0,T;L^p(\Omega))$, for any $1\le p <3$},
\end{equation}
and since $\sqrt{c_j^{(n)}}$ is bounded in $L^2(0,T;H^1(\Omega))$ we see that also up to a subsequence
\begin{equation}\label{e3.sqrtcj}
\sqrt{c_j^{(n)}}\rightharpoonup \sqrt{c_j} \text{ weakly in $L^2(0,T;H^1(\Omega))$}.
\end{equation}
In particular, the $c_j^{(n)}$ are bounded in $L^1(0,T;L^3(\Omega))$ and $c_j \in L^1(0,T;L^3(\Omega))$.

Next, using the interpolation inequality $\|\cdot\|_{L^r}\le \|\cdot\|_{L^3}^\theta\|\cdot\|_{L^1}^{1-\theta}$, where $0\le\theta\le 1$ and $\frac{1}{r}=\frac{\theta}{3}+\frac{1-\theta}{1}$, and taking into account the conservation of mass 
\begin{equation}\label{e3.mass}
\int_\Omega c_j^{(n)}(t,x)\, dx = \int_\Omega c_{j}^0(x)\, dx,
\end{equation}
 we see that
\begin{equation}\label{e3.48}
c_j^{(n)} \text{ are bounded in $L^{1/\theta}(0,T;L^r(\Omega)),$ where $r=\frac{3}{3-2\theta}$ and $0<\theta\le 1$.}
\end{equation}
 Also, by \eqref{e3.45}, \eqref{e3.sqrtcj} and \eqref{e3.mass}, one has
\begin{multline}\label{e3.47}
\nabla c_j^{(n)}=\sqrt{c_j^{(n)}}\, \nabla\sqrt{c_j^{(n)}}\rightharpoonup \nabla c_j \\
\text{ weakly in $L^2(0,T;L^1(\Omega))\cap L^1(0,T;L^q(\Omega))$, for $q\in [1,3/2)$.}
\end{multline}
Moreover, since 
\[
\|\nabla c_j^{(n)}\|_{L^1(0,T;L^{3/2}(\Omega))}\le \|\sqrt{c_j^{(n)}}\|_{L^2(0,T;L^6(\Omega))}\|\nabla\sqrt{c_j^{(n)}}\|_{L^2((0,T)\times\Omega)}\le C,
\]
we have that $\|\nabla c_j\|_{L^1(0,T;L^{3/2}(\Omega))}$.

Now we deal with the compactness of $\Phi^{(n)}$. From \eqref{e3.est} we have that $\Phi^{(n)}$ is bounded in $L^\infty(0,T;H^1(\Omega))$, so that, there is a function $\Phi\in L^\infty(0,T;H^1(\Omega))$ such that, up to a subsequence, 
\begin{equation}
\Phi^{(n)}\rightharpoonup \Phi \text{ weakly in $L^q(0,T;H^1(\Omega))$, for any $q\in[1,\infty)$.}
\end{equation}
In fact, $\nabla\Phi^{(n)}\rightharpoonup\nabla\Phi^{(n)}$ weakly-* in $L^\infty(0,T;L^2(\Omega))$.

Since $c_j^{(n)}$ and $\Phi^{(n)}$ solve equations \eqref{e3.NP} and \eqref{e3.P} with $u=u_n$, we have that
\[
\partial_t c_j^{(n)} = -\div(c_j^{(n)}u_n) + \div(V_j^{(n)}),
\] 
where $V_j^{(n)}=\nabla c_j^{(n)}+z_jc_j^{(n)}\nabla\Phi^{(n)}$. Here, taking $\theta=\frac{3}{5}$ in \eqref{e3.48}, we have that the $\sqrt{c_j^{(n)}}$ are bounded in $L^{10/3}((0,T)\times\Omega)$. Thus,
\[
\|V_j^{(n)}\|_{L^{5/4}((0,T)\times\Omega)}\le \|\sqrt{c_j^{(n)}}\|_{L^{10/3}((0,T)\times\Omega)}\|(c_j^{(n)})^{-1/2}V_j^{(n)}\|_{L^2((0,T)\times\Omega)}\le C.
\]
Furthermore, taking $\theta=\frac{3}{8}$, we have
\[
\|c_j^{(n)}u_n\|_{L^{8/7}(0,T;L^{12/11}(\Omega))}\le \|c_j^{(n)}\|_{L^{8/7}(0,T;L^{4/3}(\Omega))}\|u_n\|_{L^2(0,T;L^6(\Omega))}\le C.
\] 
As a consequence, the $\partial_t c_j^{(n)}$ are bounded in $L^{8/7}(0,T;W_0^{-1,12/11}(\Omega))$, which, from \eqref{e3.P}, implies that $\partial_t \Phi_n$ is bounded in $L^{8/7}(0,T;W^{1,\frac{12}{11}}(\Omega))$. Hence, the Aubin-Lions lemma implies that, up to a subsequence, we have that
\[
\Phi^{(n)}\to \Phi, \quad\text{in $C([0,T];L^p(\Omega)$, for $1\le p<6$.}
\]

 Now, taking into account \eqref{e3.45}, we conclude that
\begin{equation}\label{e3.49}
c_j^{(n)}\nabla\Phi^{(n)}\rightharpoonup c_j\nabla\Phi \text{ weakly in $L^1((0,T)\times\Omega)$.}
\end{equation}
Finally, since
\begin{equation}
u_n\rightharpoonup u \text{ weakly in $L^2(0,T;H_0^1(\Omega))$},
\end{equation}
we also have that
\begin{equation}
c_j^{(n)}u_n\rightharpoonup c_j u  \text{ weakly in $L^1((0,T)\times\Omega)$.}
\end{equation}

At this point, we recall that $c_j^{(n)}$ satisfies \eqref{e3.NP} in the sense that 
\begin{equation}\label{e3.NPweak_n}
\int_0^T\int_\Omega \Big(-c_j^{(n)}\eta_t +( -c_j^{(n)}u_n + \nabla c_j^{(n)} + z_jc_j^{(n)}\nabla\Phi^{(n)})\cdot\nabla\eta \Big)dx\, dt = \int_\Omega c_j^0 \eta(0)dx,
\end{equation}
for any $\eta\in C^\infty([0,T]\times\overline{\Omega})$ such that $\eta(T,\cdot)=0$. And we note that we can pass to the limit as $n\to\infty$ in each term to conclude that $c_j$ satisfies
\begin{equation}\label{e3.NPweak_infty}
\int_0^T\int_\Omega \Big(-c_j\eta_t +( -c_ju + \nabla c_j + z_jc_j\nabla\Phi)\cdot\nabla\eta \Big)dx\, dt = \int_\Omega c_j^0 \eta(0)dx,
\end{equation}
which is the weak formulation of equation \eqref{e3.NP}. We may also pass to the limit as $n\to \infty$ in order to conclude that $\Phi$ solves \eqref{e3.P}.
\end{proof}

Let us finally conclude the proof of Proposition~\ref{P:3.1}. To that end, let $(\rho_n,u_n, c_j^{(n)}, \Phi^{(n)})$ be the sequence of Galerkin approximations constructed in Subsection~\ref{S:3.3} above. Due to the estimates from Lemma~\ref{l3.4}, we can apply Lemma~\ref{l3.6} in order to conclude that there is a subsequence (not relabeled) such that the limit functions $u$, $c_1,...,c_N$ and $\Phi$ solve the limit PNP subsystem. 	

Since the estimates from Lemma~\ref{l3.4} are uniform with respect to $\vartheta$, then, by virtue of \eqref{e3.45}, it follows that $\Phi^{(n)}(t)\to \Phi(t)$ in $H^1(\Omega)$ for a.e. $t$, uniformly in $\vartheta$ (cf. estimate \eqref{e3.27}). Actually, a similar estimate to \eqref{e3.27} yields the fact that $\Psi_n(t)\to \Psi(t)$ in $L^2(\Omega)$, for a.e. $t$, uniformly in $\vartheta$, where $\Psi$ is the solution to \eqref{e3.5'} with \eqref{e3.12}. Hence, $\Phi^{(n)}(t)\to \Phi(t)$ in $H^2(\Omega)$ for a.e. $t$, also uniformly in $\vartheta$.

Finally, we observe that since the limit function $c_j$ belongs to $L^1(0,T;W^{1,3/2}(\Omega))$, we actually have that $\Phi_2$ is in $L^1(0,T;W^{3,3/2}(\Omega))$.

As already mentioned, the convergence of $(\rho_n, u_n)$ may be performed following the arguments in \cite[Section 7.3.6]{F} and it only remains to verify the convergence of the terms in the momentum equation related to the functions $c_j^{(n)}$ and $\Phi^n$. These are the last two terms in \eqref{e3.29}, whose convergence is guaranteed by \eqref{e3.47} and \eqref{e3.49}. Thus, we finally conclude that the limit functions $(\rho, u, c_j, \Phi, \Psi)$ are a solution of system \eqref{e3.1}-\eqref{e3.12}.

At last, we see that the inequalities \eqref{e3.15} and \eqref{e3.T_Phi} hold
 by the lower semi-continuity when letting $n\to\infty$ in \eqref{e3.35} and in \eqref{e3.36}, respectively, thus completing the the proof of Proposition~\ref{P:3.1}.




\section{Vanishing Viscosity and Artificial Pressure}\label{S4}

Let $(\rho_{\vartheta,\delta},u_{\vartheta,\delta},c_j^{({\vartheta,\delta})},\Phi^{(\vartheta,\delta)})$ be the solution of the approximate problem \eqref{e3.1}-\eqref{e3.12} given by Proposition~\ref{P:3.1}. In order to find solutions to the original PNPNS system, we consider the limit as $\vartheta\to 0$ first and then as $\delta\to 0$. 

Once we justify the convergence of the ion densities and the electrostatic potential, and guarantee that the forcing terms in the momentum equation converge to their counterparts in the limit, the theory from \cite{L2,F} may be applied almost directly in order to find the solutions to the original system in the limit. As usual, the key point is to show the strong convergence of the sequence $\{\rho_\delta\}$, which follows by the weak continuity of the effective viscous flux, together with the fact that the fluid's density satisfies the continuity equation in the sense of renormalized solutions. Since this procedure is already somewhat well understood in the literature for the case of the Navier-Stokes equations, we focus on the modifications that have to be made in order to accommodate the terms related to the ion densities.

We first consider the limit as $\vartheta\to 0$. When $\vartheta=0$, the approximate system reads as
\begin{align}
&\partial_t\rho+\div(\rho u)=0,\label{e4.1}\\ 
&\partial_t(\rho u) + \div(\rho u\otimes u)+\nabla (a\rho^\gamma + \delta \rho^\beta) = \div\mathbb{S}-\sum_{j=1}^N\nabla c_j-\sum_{j=1}^N z_j c_j \nabla\Phi,\label{e4.2}\\
&\partial_tc_j+\div(c_ju)=\div\left( \nabla c_j+z_jc_j\nabla\Phi \right),\label{e4.3}\\
&-\Delta\Phi=\sum_{j=1}^Nz_j c_j ,\label{e4.4}
\end{align}  
Accordingly, the initial and boundary conditions for system \eqref{e4.1}-\eqref{e4.5} are 
\begin{equation}\label{e4.5}
(\rho,\rho u, c_1,...,c_N)(0,x)=(\rho_{0,\delta}, m_{0,\delta},c_1^{(0,\delta)},...,c_N^{(0,\delta)})(x), \quad x\in\Omega,
\end{equation}
and 
\begin{align}
&u|_{\partial \Omega}=0,\label{e4.6}\\
&\left(\partial_\nu c_j  - c_j\partial_\nu\Phi\right)|_{\partial \Omega} = 0,\label{e4.7}\\
&(\partial_\nu\Phi + \tau\Phi)|_{\partial \Omega}=V.\label{e4.8}
\end{align}

\begin{proposition}\label{P:4.1}
Suppose that the initial data $(\rho_{0,\delta}, m_{0,\delta},c_j^{(0,\delta)})$ is smooth and satisfies 
\begin{equation}
\tilde{M}_{0,\delta}\le \rho_{0,\delta}\le M_{0,\delta}
\end{equation}
and
\begin{equation}
0\le c_j^{0,\delta}\le M_{0,\delta},
\end{equation}
for some positive constants $\tilde{M}_{0,\delta}$ and $M_{0,\delta}$. Assume further that $\partial_\nu\rho_0|_{\partial \Omega}=0$ and that $\beta>\max\{4,\frac{6\gamma}{2\gamma-3}\}$. 
Let $(\rho_{\vartheta,\delta},u_{\vartheta,\delta},c_j^{({\vartheta,\delta})},\Phi^{(\vartheta,\delta)})$ be the solution of the regularized problem \eqref{e3.1}-\eqref{e3.12} given by Proposition~\ref{P:3.1}. 
Then, there is a subsequence (not relabeled) that converges, as $\vartheta \to 0$, to a global finite energy weak solution $(\rho_{\delta},u_{\delta},c_j^{({\delta})},\Phi^{(\delta)})$ of \eqref{e4.1}-\eqref{e4.8}, where the initial and boundary conditions are satisfied in the sense of distributions.
Moreover, $\rho_\delta$ is nonnegative and satisfies the continuity equation \eqref{e4.1} in the sense of renormalized solutions. The ion densities $c_j^{(\delta)}$ are also nonnegative and have conserved mass:
\begin{equation}
\int_\Omega c_j^{(\delta)}(t) dx = \int_\Omega c_j^{(0,\delta)} dx,\quad \text{for a.e. $t\in [0,T]$, $j=1,...,N$}.
\end{equation}
Furthermore, the limit functions satisfy the following energy inequality
\begin{multline}\label{e4.9}
E_{\delta}(t) + \int_0^t\int_\Omega \Big(\mu|\nabla u_\delta|^2 + (\lambda+\mu)(\div u_\delta)^2 + \sum_{j=1}^Nc_j^{(\delta)}\Big|\frac{\nabla c_j^{(\delta)}}{c_j^{(\delta)}}+ z_j\nabla\Phi^{(\delta)}\Big|^2\Big)dx\, ds
 \le E_{\delta}(0),
\end{multline}
where,
\begin{multline}\label{e4.energy-delta}
E_{\delta}(t)=\int_\Omega\Bigg( \rho_\delta\left(\frac{1}{2} |u_\delta|^2 + \frac{a}{\gamma-1}\rho_\delta^{\gamma-1} + \frac{\delta}{\beta-1}\rho_\delta^{\beta-1}\right) \\
+ \sum_{j=1}^N(c_j^{(\delta)}\ln c_j^{(\delta)} - c_j^{(\delta)} + 1) + \frac{1}{2}|\nabla\Phi^{(\delta)}|^2 \Bigg)dx +\frac{\tau}{2}\int_{\partial\Omega}|\Phi^{(\delta)}|^2dS.
\end{multline}
Also, if $\rho_{0,\delta}^\gamma + \delta\rho_{0,\delta}^\beta$ is bounded in $L^1(\Omega)$, uniformly with respect to $\delta$, then $E_\delta(0)$ is also bounded uniformly, with respect to $\delta$, and there is a positive constant $C$, independent of $\delta$, such that
\begin{equation}\label{e4.T_Phi}
\int_0^T\int_\Omega\left( \sum_{j=1}^N\frac{|\nabla c_j^{(\delta)}|^2}{c_j^{(\delta)}}+c_j^{(\delta)}|\nabla \Phi^{(\delta)}|^2+|\Delta \Phi^{(\delta)}|^2 \right)dx\, dt\le C.
\end{equation}

\end{proposition}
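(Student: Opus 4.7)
The plan is to combine the uniform-in-$\vartheta$ estimates from Proposition~\ref{P:3.1} with the weak stability result for the PNP subsystem (Lemma~\ref{l3.6}) and the Lions-Feireisl machinery for the compressible Navier-Stokes equations. From the energy inequality \eqref{e3.15} together with \eqref{e3.T_Phi} and \eqref{e3.psi} one gets $\vartheta$-uniform bounds for $\rho_{\vartheta,\delta}$ in $L^\infty(0,T;L^\gamma\cap L^\beta)$, $u_{\vartheta,\delta}$ in $L^2(0,T;H_0^1)$, $\sqrt{\rho_{\vartheta,\delta}}\,u_{\vartheta,\delta}$ in $L^\infty(0,T;L^2)$, and precisely the bounds \eqref{e3.est} needed to invoke Lemma~\ref{l3.6} in view of Remark~\ref{R:3.7}. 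After extracting a subsequence, weak limits $\rho_\delta$, $u_\delta$ are obtained, and Lemma~\ref{l3.6} provides the required convergences of $(c_j^{(\vartheta,\delta)},\Phi^{(\vartheta,\delta)})$ to $(c_j^{(\delta)},\Phi^{(\delta)})$. The artificial terms vanish in distribution: $\vartheta\Delta\rho_{\vartheta,\delta}$ and $\vartheta\nabla u_{\vartheta,\delta}\cdot\nabla\rho_{\vartheta,\delta}$ tend to zero thanks to $\vartheta\int_0^T\!\!\int_\Omega(\rho^{\gamma-2}+\rho^{\beta-2})|\nabla\rho|^2\,dx\,ds\le C$ coupled with Cauchy-Schwarz, and \eqref{e3.psi} forces $\vartheta\Delta^2\Phi_2^{(\vartheta,\delta)}\to 0$, so the regularized Poisson relation collapses to \eqref{e4.4}. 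The forcing terms $-\sum\nabla c_j^{(\vartheta,\delta)}-\sum z_j c_j^{(\vartheta,\delta)}\nabla\Phi^{(\vartheta,\delta)}$ in the momentum equation pass to the limit via the weak convergences provided by Lemma~\ref{l3.6}.

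The main obstacle is the strong convergence of $\rho_{\vartheta,\delta}$, needed to identify the weak pressure limit $\overline{p}$ with $a\rho_\delta^\gamma+\delta\rho_\delta^\beta$. For this I would follow the Lions-Feireisl strategy in three steps: first, derive an improved pressure estimate of the form $\int_0^T\!\!\int_\Omega (a\rho^{\gamma+\theta}+\delta\rho^{\beta+\theta})\,dx\,ds\le C$ for some $\theta>0$ by testing the momentum equation against the Bogovskii operator applied to $\rho^\theta-\langle\rho^\theta\rangle$, checking that the PNP-forcing contributions are harmless thanks to the bounds $c_j^{(\vartheta,\delta)}\nabla\Phi^{(\vartheta,\delta)}\in L^{r_1}$ from Lemma~\ref{l3.6} and standard Sobolev embeddings; second, establish the weak continuity of the effective viscous flux $(a\rho^\gamma+\delta\rho^\beta)-(\lambda+2\mu)\,\mathrm{div}\,u$ via the commutator/div-curl argument of \cite{F}, where the crucial new observation is that $-\sum\nabla c_j^{(\vartheta,\delta)}-\sum z_j c_j^{(\vartheta,\delta)}\nabla\Phi^{(\vartheta,\delta)}$ converges \emph{strongly} in $W^{-1,r}$ for some $r>1$ (again by Lemma~\ref{l3.6}) and therefore contributes no defect measure; third, combine this with the renormalization property of the limit continuity equation, which is available since the improved pressure estimate and the Diperna-Lions theory apply (with $\beta>\frac{6\gamma}{2\gamma-3}$ guaranteeing the exponent needed for the oscillations-defect argument), to conclude $\rho_{\vartheta,\delta}\to\rho_\delta$ a.e., hence strongly in $L^1$. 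Once this is in hand, $\overline{p}=a\rho_\delta^\gamma+\delta\rho_\delta^\beta$ and the limit equations \eqref{e4.1}-\eqref{e4.4} are recovered.

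The remaining items are routine. The energy inequality \eqref{e4.9} is inherited by lower semicontinuity from \eqref{e3.15} as $\vartheta\to 0$ (noting that the extra regularizing terms in $E_{\vartheta,\delta}$ are non-negative, while the boundary piece $\tau\int_{\partial\Omega}|\Phi_2^{(\vartheta,\delta)}|^2$ converges by the strong trace convergence of $\Phi_2^{(\vartheta,\delta)}$ implied by Lemma~\ref{l3.6}). The renormalized form of \eqref{e4.1} for $\rho_\delta$ follows from the strong convergence of $\rho_{\vartheta,\delta}$ together with the Diperna-Lions theory. Mass conservation for $c_j^{(\delta)}$ is obtained by testing the weak form of \eqref{e4.3} with a constant, since the boundary condition \eqref{e4.7} annihilates the normal flux. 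Finally, the uniform-in-$\delta$ bound \eqref{e4.T_Phi} is obtained by repeating the computation of Lemma~\ref{l3.4} at the level of the limit system, using only the uniform-in-$\delta$ boundedness of $E_\delta(0)$ (which is assumed) and the conservation of ion masses to control the cross term involving $\Phi_1$, exactly as in \eqref{e3.40}-\eqref{e3.42}.
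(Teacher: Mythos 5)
Your proposal takes essentially the same route as the paper: Lemma~\ref{l3.6} (applied with $\vartheta>0$ and then uniformly as $\vartheta\to 0$, cf.\ Remark~\ref{R:3.7}) handles the Poisson--Nernst--Planck subsystem, while the fluid part is treated with the standard Lions--Feireisl pressure estimate, effective-viscous-flux, and renormalization machinery, the new forcing terms passing to the limit via the weak convergences of $\nabla c_j$ and $c_j\nabla\Phi$. In fact the paper explicitly omits this proof, deferring the effective-viscous-flux and strong-density-convergence arguments to the analogous, more delicate $\delta\to 0$ limit in Section~\ref{S4}; your sketch is consistent with that deferred argument. The one imprecision is your claim that \eqref{e4.9} is inherited ``by lower semicontinuity since the extra regularizing terms are non-negative'': the energy $E_{\vartheta,\delta}$ of \eqref{e3.energy-aprox} is written in terms of $|\nabla\Phi_2|^2$ together with the sign-indefinite cross term $\int_\Omega(\sum_j z_jc_j)\Phi_1\,dx$, whereas $E_\delta$ in \eqref{e4.energy-delta} uses $\frac12|\nabla\Phi|^2$ with $\Phi=\Phi_1+\Phi_2$; one must note, as the paper does in the unnumbered remark following Proposition~\ref{P:4.1}, that the $\vartheta\to 0$ limit energy $\tilde E_\delta$ differs from $E_\delta$ only by the time-independent constant $\frac12\int_\Omega|\nabla\Phi_1|^2dx+\frac{\tau}{2}\int_{\partial\Omega}|\Phi_1|^2dS$, so that the inequality transfers.
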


Let us point out that the limit of $(c_j^{({\vartheta,\delta})},\Phi^{(\vartheta,\delta)}),$ as $\vartheta\to 0,$ follow from Lemma~\ref{l3.6}, based on the bounds independent of $\vartheta$ provided by the energy estimate \eqref{e3.35}, which corresponds to \eqref{e3.15} (cf. Remark~\ref{R:3.7}). Moreover, the limit of $(\rho_{\vartheta,\delta},u_{\vartheta,\delta})$ as $\vartheta\to 0$ may be carried out following \cite[Section 7.4]{F}, based on the energy inequality \eqref{e3.15} together with some additional pressure estimates, where, as usual, the key to ensure that limit functions are a solution of the momentum equation is the strong convergence of the densities. 

Since the convergence of the approximate solutions is more delicate and the key arguments are more general when taking the limit as the artificial pressure vanishes we omit the proof of Proposition~\ref{P:4.1} and focus on the analysis when $\delta\to 0$.

\begin{remark}
The energy inequality obtained from \eqref{e3.15}, as $\vartheta\to 0$, by lower semicontinuity, is \eqref{e4.9} but with $\tilde{E}_\delta(t)$ instead of $E_\delta(t)$, where
\begin{multline}\label{e4.deltatilde}
\tilde{E}_{\delta}(t)=\int_\Omega\Bigg( \rho\left(\frac{1}{2} |u|^2 + \frac{a}{\gamma-1}\rho^{\gamma-1} + \frac{\delta}{\beta-1}\rho^{\beta-1}\right) \\
+ \sum_{j=1}^N(c_j\ln c_j - c_j + 1) + \frac{1}{2}|\nabla\Phi_2|^2+(\sum_{j=1}^Nz_jc_j)\Phi_1 \Bigg)dx +\frac{\tau}{2}\int_{\partial\Omega}|\Phi_2|^2dS.
\end{multline}
Here, $\Phi_1$ is the unique solution to
\begin{equation}\label{e4.0}
\begin{cases}
-\Delta \Phi_1 = 0,&\text{in $\Omega$},\\
\partial_\nu \Phi_1 + \tau\Phi_1 = V,&\text{on $\partial\Omega$}.\\
\end{cases}
\end{equation}
and $\Phi_2(t,x)=\Phi(t,x)-\Phi_1(x)$. 

Noting that $-\Delta \Phi_2 = \sum_{j=1}^Nz_jc_j$, with $(\Phi_2+\tau\partial_\nu\Phi_2)|_{\partial \Omega}=0$, we see that
\[
\int_\Omega \left(\sum_{j=1}^Nz_jc_j\right)\Phi_1 dx = \int_\Omega \nabla\Phi_2\cdot\nabla \Phi_1 dx + \tau\int_{\partial\Omega}\Phi_2\Phi_1\, dS.
\]
Thus, we have that
\[
E_\delta(t)=\tilde{E}_\delta(t) + \frac{1}{2}\int_\Omega |\nabla\Phi_1|^2dx + \frac{\tau}{2}\int_{\partial \Omega}|\Phi_1|^2 dS,
\]
and we readily obtain \eqref{e4.9}.
\end{remark}

\begin{remark}
Note that \eqref{e4.9} implies that $\sqrt{c_j^{(\delta)}}$ is bounded in $L^2(0,T;H^1(\Omega))$ and, therefore, 
\begin{equation}\label{e4.17}
\|\nabla c_j^{(\delta)}\|_{L^1(0,T;L^{3/2}(\Omega))}\le \|\sqrt{c_j^{(\delta)}}\|_{L^2(0,T;L^6(\Omega))}\|\nabla\sqrt{c_j^{(\delta)}}\|_{L^2((0,T)\times\Omega)}\le C,
\end{equation}
for some positive constant $C$, independent of $\delta$.
Moreover, we have that $c_j^{(\delta)}$ are bounded in $L^1(0,T;L^3(\Omega))$,  uniformly in $\delta$, which implies that 
\begin{equation}\label{e4.17'}
\|c_j^{(\delta)}\nabla\Phi^{(\delta)}\|_{L^1(0,T;L^{6/5}(\Omega))}\le \|c_j^{(\delta)}\|_{L^1(0,T;L^3(\Omega))}\|\nabla\Phi^{(\delta)}\|_{L^\infty(0,T;L^2(\Omega))} \le C. 
\end{equation} 
Also, arguing as in Subsection~\ref{S3.5}, using equations \eqref{e4.3} and \eqref{e4.4} and the Aubin-Lions lemma, we may conclude that $\Phi^{(\delta)}$ is bounded in $C([0,T];L^q(\Omega))$ for any $1\le q <6$.
\end{remark}

\subsection{Integrability of the fluid density}\label{S4.1}

Let $(\rho_{\delta},u_{\delta},c_j^{({\delta})},\Phi^{(\delta)})$ be the solution of system \eqref{e4.1}-\eqref{e4.8} provided by Proposition~\ref{P:4.1}. We first derive an estimate on the fluid  density, uniform in $\delta$, that will justify the limit $\delta \rho_\delta^\beta\to 0$, as $\delta\to 0$. The proof is inspired by the analogue estimate in \cite{HW} (cf. \cite{FNP}).

\begin{lemma}\label{l4.1}
Let $\rho_{0,\delta}^\gamma + \delta\rho_{0,\delta}^\beta$ be bounded in $L^1(\Omega)$, uniformly in $\delta$.
Then, there is a positive constant $C$, independent of $\delta$, such that 
\begin{equation}\label{e4.16}
\int_0^T\int_\Omega (a\rho_\delta^\gamma+\delta\rho_\delta^\beta)\ln(1+\rho_\delta)dx\, dt \le C.
\end{equation}
\end{lemma}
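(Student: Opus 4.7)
The plan is to test the weak momentum equation \eqref{e4.2} with a Bogovskii-type function engineered so that its divergence reproduces $\ln(1+\rho_\delta)$ (up to its spatial mean). Let $\mathcal{B}:L^p_0(\Omega)\to W^{1,p}_0(\Omega;\mathbb{R}^3)$ denote the standard Bogovskii solution operator for $\div v=f$ with $v|_{\partial\Omega}=0$, bounded for every $1<p<\infty$, and write $\overline{f}:=|\Omega|^{-1}\int_\Omega f\,dx$. I would take
\[
\varphi_\delta(t,x):=\mathcal{B}\bigl[\,\ln(1+\rho_\delta(t,\cdot))-\overline{\ln(1+\rho_\delta(t,\cdot))}\,\bigr](x).
\]
Since $\rho_\delta\in L^\infty(0,T;L^\gamma(\Omega))$ uniformly in $\delta$ (from the energy inequality \eqref{e4.9}) and $\ln(1+\rho)\le\rho+1$, the test function $\varphi_\delta$ is bounded in $L^\infty(0,T;W^{1,\gamma}_0(\Omega))$ uniformly in $\delta$. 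Inserting $\varphi_\delta$ in \eqref{e4.2} produces on the left-hand side exactly the quantity we want to control, modulo the correction $\int_0^T\overline{\ln(1+\rho_\delta)}\int_\Omega(a\rho_\delta^\gamma+\delta\rho_\delta^\beta)\,dx\,dt$, which is uniformly bounded thanks to conservation of mass (hence $\overline{\ln(1+\rho_\delta)}\in L^\infty(0,T)$) combined with \eqref{e4.9}.

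The remaining terms on the right-hand side are handled in the spirit of \cite{FNP,F}, with additional care for the PNP forcing $-\sum_j\nabla c_j^{(\delta)}-\sum_j z_j c_j^{(\delta)}\nabla\Phi^{(\delta)}$. The convective and viscous contributions are bounded by H\"older's inequality combined with the energy estimate and the Sobolev embedding of $W^{1,\gamma}_0$. For the force $-\sum_j\nabla c_j^{(\delta)}$, integration by parts in $x$ (justified by $\varphi_\delta|_{\partial\Omega}=0$) converts it to $-\int_0^T\int_\Omega\sum_j c_j^{(\delta)}(\ln(1+\rho_\delta)-\overline{\ln(1+\rho_\delta)})\,dx\,dt$, which is uniformly controlled using $c_j^{(\delta)}\in L^1(0,T;L^3(\Omega))$ from \eqref{e4.17} together with $\ln(1+\rho_\delta)\in L^\infty(0,T;L^q(\Omega))$ for any finite $q$. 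The force $-\sum_j z_j c_j^{(\delta)}\nabla\Phi^{(\delta)}$ is estimated by pairing \eqref{e4.17'} in $L^1_tL^{6/5}_x$ against $\varphi_\delta$ in $L^\infty_tL^{q'}_x$ for $q'$ large, via the Sobolev embedding of the Bogovskii target space.

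The main obstacle is the time-derivative contribution $-\int_0^T\int_\Omega\rho_\delta u_\delta\cdot\partial_t\varphi_\delta\,dx\,dt$. To compute $\partial_t\varphi_\delta$ I would invoke the renormalized continuity equation with $b(s)=\ln(1+s)$, namely
\[
\partial_t\ln(1+\rho_\delta)+\div(\ln(1+\rho_\delta)u_\delta)+\Bigl(\tfrac{\rho_\delta}{1+\rho_\delta}-\ln(1+\rho_\delta)\Bigr)\div u_\delta=0,
\]
and exploit the standard fact that $\mathcal{B}\circ\div$ acts continuously on $L^p$-fields with vanishing normal trace (applicable since $u_\delta|_{\partial\Omega}=0$), reducing $\partial_t\varphi_\delta$ to quantities of order $\ln(1+\rho_\delta)u_\delta$ and $(\frac{\rho_\delta}{1+\rho_\delta}-\ln(1+\rho_\delta))\div u_\delta$. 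The delicate piece is the latter, because $\frac{s}{1+s}-\ln(1+s)\sim -\ln(1+s)$ for large $s$ generates a coupling between $\rho_\delta\ln(1+\rho_\delta)|u_\delta|$ and $|\div u_\delta|$. The hard part will be to split this product by Young's inequality so that any contribution involving $\delta\rho_\delta^\beta\ln(1+\rho_\delta)$ appears with an arbitrarily small coefficient and can be absorbed into the left-hand side, while the residual is controlled by the $L^2$-norm of $\nabla u_\delta$ and the $L^\gamma$-norm of $\rho_\delta$ from the energy inequality. The hypothesis $\beta>\max\{4,6\gamma/(2\gamma-3)\}$ is precisely what is needed to close this bootstrap uniformly in $\delta$.
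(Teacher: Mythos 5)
Your overall strategy matches the paper's: use the Bogovskii operator to build a test function whose divergence is $\ln(1+\rho_\delta)$ minus its mean, insert it into the weak momentum equation, compute the time-derivative via the renormalized continuity equation with $b(s)=\ln(1+s)$, and control the new PNP forcing terms by \eqref{e4.17}, \eqref{e4.17'}. Your treatment of the forcing terms is essentially what the paper does (this is the genuinely new term $I_7$ in the paper's notation). Two technical remarks: the paper multiplies the Bogovskii test function by a time cutoff $\psi\in C_c^\infty(0,T)$ and only at the very end sends $\psi\to \mathbf{1}_{(0,T)}$; and you correctly invoke the $\mathcal{B}\circ\div$ bound for the convective piece $\div(\ln(1+\rho_\delta)u_\delta)$.

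The genuine gap is the final paragraph. You identify the term with $\bigl(\frac{\rho_\delta}{1+\rho_\delta}-\ln(1+\rho_\delta)\bigr)\div u_\delta$ as requiring a Young/absorption ``bootstrap'' in which $\delta\rho_\delta^\beta\ln(1+\rho_\delta)$ is absorbed into the left-hand side, and you attribute the hypothesis $\beta>\max\{4,6\gamma/(2\gamma-3)\}$ to closing this. This is not how the term is estimated, and I do not see how the proposed absorption could work: nothing in that term carries a factor of $\delta\rho_\delta^\beta$, and there is no way to manufacture one from $\rho_\delta\ln(1+\rho_\delta)$ with a small coefficient and a $\delta$-independent remainder — any Young inequality generating $\delta\rho_\delta^\beta$ produces a constant blowing up as $\delta\to 0$. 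What actually saves the day is that the pairing is not pointwise: the integrand is $\rho_\delta u_\delta\cdot\mathcal{B}[(\ln(1+\rho_\delta)-\tfrac{\rho_\delta}{1+\rho_\delta})\div u_\delta-\text{mean}]$, and the Bogovskii operator gains a derivative. Since $\ln(1+\rho_\delta)\le C_\epsilon(1+\rho_\delta^\epsilon)$ and $\rho_\delta\in L^\infty(0,T;L^\gamma(\Omega))$, the prefactor $\ln(1+\rho_\delta)-\tfrac{\rho_\delta}{1+\rho_\delta}$ lies in $L^\infty(0,T;L^q(\Omega))$ for every finite $q$; multiplying by $\div u_\delta\in L^2((0,T)\times\Omega)$ and applying $\mathcal{B}$ plus Sobolev embedding gives membership in $L^2(0,T;L^{q^*}(\Omega))$ with $q^*<6$ arbitrarily close to $6$, which pairs with $\rho_\delta u_\delta\in L^2(0,T;L^{6\gamma/(\gamma+6)}(\Omega))$ precisely when $6\gamma/(\gamma+6)>6/5$, i.e., $\gamma>3/2$. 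So $I_5$ (and likewise $I_6$) is bounded directly by H\"older and the energy inequality, as in \cite[Lemma~5.1]{HW}; no absorption and no use of $\beta$ is needed in this lemma (the $\beta$ hypothesis enters Proposition~\ref{P:4.1} elsewhere).
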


\begin{proof}
Let $b(\rho)=\ln(1+\rho)$. Since $\rho_\delta$ is a renormalized solution of \eqref{e4.1}, we have that
\begin{equation}\label{e4.18}
\ln(1+\rho_\delta)_t+\div\Big(\ln(1+\rho_\delta)u_\delta\Big)+\left(\frac{\rho_\delta}{1+\rho_\delta}-\ln(1+\rho_\delta) \right)\div u_\delta = 0.
\end{equation}

Let us consider the Bogovskii operator, i.e., the bounded linear operator
\[
B:\left\{f\in L^p(\Omega):\int_\Omega f\, dx=0\right\}\to [W_0^{1,p}(\Omega)]^3,
\]
which satisfies the estimate
\[
\| B[f]\|_{W_0^{1,p}(\Omega)}\le C(p)\| f\|_{L^p(\Omega)}, \quad 1<p<\infty,
\]
such that the function $W=B[f]\in\mathbb{R}^3$ satisfies the equation
\[
\div W = f\text{ in $\Omega$,}\quad W|_{\partial\Omega}=0.
\] 
Moreover, if $f=\div g$ for some $g\in L^r(\Omega)$ with $\partial_\nu g|_{\partial \Omega} =0$, then
\[
\| B[f]\|_{L^r(\Omega)}\le C(r)\| g\|_{L^r(\Omega)}.
\]

We then, define the test function $\varphi$ by its coordinates
\[
\varphi_i = \psi(t)B_i\left[\ln(1+\rho_\delta)- \Medint_\Omega \ln(1+\rho_\delta) dx \right],\quad i=1,2,3,
\]
where, $\medint_\Omega \ln(1+\rho_\delta) dx=\frac{1}{|\Omega|}\int_\Omega \ln(1+\rho_\delta) dx$ and $\psi\in C_c^\infty(0,T)$.
In view of equation \eqref{e4.18} and the energy inequality \eqref{e4.9} we have that
\[
\ln(1+\rho_\delta)\in C([0,T]; L^p(\Omega),\, \text{for any finite $p>1$.}
\]
Then, we have that
\begin{equation}\label{e4.21}
\varphi_i\in C([0,T];W_0^{1,p}(\Omega)),\, \text{for any finite $p>1$,}
\end{equation}
in particular $\varphi\in C([0,T]\times\Omega)$. Note, also, that from equation \eqref{e4.18} we have
\begin{multline}\label{e4.22}
\partial_t B\left[ \ln(1+\rho_\delta)-\Medint_\Omega\ln(1+\rho_\delta)dx\right] = -B\left[\div\Big( \ln(1+\rho_\delta)u_\delta\Big)\right] \\
- B\left[\frac{\rho_\delta}{1+\rho_\delta} - \ln(1+\rho_\delta)-\Medint_\Omega\left( \frac{\rho_\delta}{1+\rho_\delta} - \ln(1+\rho_\delta) \right) dx\right].
\end{multline}

Now, by virtue of \eqref{e4.21}, we can use $\varphi$ as a test function in the momentum equation \eqref{e4.2}, to obtain, after a long, but straightforward calculation involving \eqref{e4.22}, the following identity
\begin{equation}
\int_0^T\int_\Omega \psi(a\rho_\delta^\gamma + \delta\rho_\delta^\beta)\ln(1+\rho_\delta)dx\, dt = \sum_{j=1}^7 I_j,
\end{equation}
where,
\begin{align*}
&I_1=\int_0^T\psi\int_\Omega (a\rho^\gamma + \delta\rho^\beta)dx \Medint_\Omega\ln(1+\rho_\delta)dx\, dt,\\
&I_2=\int_0^T\int_\Omega \psi \mathbb{S}^\delta : \nabla B\left[\ln(1+\rho_\delta)-\Medint_\Omega\ln(1+\rho_\delta)dx\right] dx\, dt\\
&I_3=-\int_0^T\int_\Omega\psi_t\rho_\delta u_\delta\cdot B\left[\ln(1+\rho_\delta)-\Medint_\Omega\ln(1+\rho_\delta)dx\right] dx\, dt\\
&I_4=-\int_0^T\int_\Omega\psi \rho_\delta u_\delta\otimes u_\delta : \nabla B\left[\ln(1+\rho_\delta)-\Medint_\Omega\ln(1+\rho_\delta)dx\right] dx\, dt\\
&I_5=\int_0^T\int_\Omega\psi\rho_\delta u_\delta\cdot B\Bigg[ \Big( \ln(1+\rho_\delta) - \frac{\rho_\delta}{1+\rho_\delta} \Big)\div u_\delta \\
&\qquad\qquad\qquad\qquad\qquad\qquad - \Medint_\Omega \Big( \ln(1+\rho_\delta) - \frac{\rho_\delta}{1+\rho_\delta} \Big)\div u_\delta\, dx \Bigg] dx\, dt\\
&I_6= \int_0^T\int_\Omega \psi\rho_\delta u_\delta\cdot B\left[\div\Big(\ln(1+\rho_\delta)u_\delta\Big)\right]dx\, dt\\
&I_7=\sum_{j=1}^N\int_0^T\int_\Omega \psi\Big(\nabla c_j^{(\delta)}+z_j c_j^{(\delta)}\nabla\Phi^{(\delta)}\Big)\cdot B\left[ \Medint_\Omega \ln(1+\rho_\delta)dx - \ln(1+\rho_\delta) \right]dx\, dt.
\end{align*}
Here $\mathbb{S}_\delta$ is given by \eqref{e1.6} with $u=u_\delta$.

As pointed out before, if $\rho_{0,\delta}^\gamma + \delta\rho_{0,\delta}^\beta$ is bounded in $L^1(\Omega)$, then the initial energy $E_\delta(0)$ is bounded uniformly with respect to $\delta$. Thus, based on the energy inequality \eqref{e4.9}, just as in the proof of Lemma~5.1 in \cite{HW}, the integrals $I_1$, ..., $I_6$ may be bounded by a positive constant which depends on $\|\psi\|_{L^\infty}$ and on $\|\frac{d}{dt}\psi\|_{L^1}$, but does not depend on $\delta$. Moreover, in view of \eqref{e4.17}, \eqref{e4.17'} and \eqref{e4.21} we have that $I_7$ is also bounded uniformly with respect to $\delta$ by a constant which depends on $\|\psi\|_{L^\infty}$.  

Therefore, we conclude that there is a constant $C=C(\|\psi\|_{L^\infty},\|\frac{d}{dt}\psi\|_{L^1})$ such that
\[
\int_0^T\int_\Omega \psi(a\rho_\delta^\gamma + \delta\rho_\delta^\beta)\ln(1+\rho_\delta)dx\, dt\le C,
\]
and taking $\psi=\psi_n$ where $\|\psi_n\|_{L^\infty}$ and $\|\frac{d}{dt}\psi\|_{L^1}$ are bounded and such that $\psi_n\to 1_{(0,T)}$, as $n\to\infty$, we obtain \eqref{e4.16}.
\end{proof}

Having estimate \eqref{e4.16} at hand, we can conclude that
\begin{equation}
\lim_{\delta\to 0}\int_0^T\int_\Omega \delta\rho_\delta^\beta dx\, dt =0.
\end{equation}
In particular,
\begin{equation}
\delta\rho_\delta^\beta\to 0, \text{ in $\mathcal{D}'((0,T)\times\Omega)$, as $\delta\to 0$.}
\end{equation}
This is shown in \cite[section~5]{HW}, by a clever application of the H\"{o}lder inequality in the Orlicz space associated to the function $s\mapsto (1+s)\ln(1+s)-s$.

\subsection{Convergence of the approximate solutions}

Now we consider the limit as $\delta \to 0$ in order to find a solution of the PNPNS system \eqref{e1.1}-\eqref{e1.13}, with $\kappa=D_j=1$. As before, we remark that this last constraint on the physical constants $\kappa$, and $D_j$ is not at all essential and its only purpose is to simplify the notations. 

Let $(\rho_0,m_0,c_j^0)$ satisfy \eqref{e1.initial} and let us consider a sequence of approximate initial data $(\rho_{0\delta},m_{0\delta},c_j^{0\delta})$ such that
\begin{enumerate}
\item[(i)] $\rho_{0\delta}$ is smooth and satisfies 
\begin{align}
&\delta < \rho_{0\delta} < \delta^{-1/2\beta},\\
&\partial_\nu\rho_{0\delta}|_{\partial\Omega}=0,\\
&\rho_{0\delta}\to \rho_0 \text{ in $L^1(\Omega)$, as $\delta \to 0$, and}\\
& |\{x\in\Omega : \rho_{0\delta}(x)<\rho_0(x)\}|\to 0, \text{ as $\delta\to 0$.}
\end{align}
\item[(ii)]
\[
m_{0\delta}(x)=\begin{cases}
m_0(x), &\text{if $\rho_{0\delta}(x)>\rho_0(x)$,}\\
0, &\text{if $\rho_{0\delta}(x)\le\rho_0(x)$.}
\end{cases}
\]
\item[(iii)] $c_j^{0\delta}$ is nonnegative and bounded and $c_j^{0\delta}\to c_j^0$ and $c_j^{0\delta}\ln(c_j^{0\delta})\to c_j^0\ln(c_j^0)$ in  $L^1(\Omega)$, as $\delta\to 0$.
\end{enumerate}
Let $(\rho_{\delta},u_{\delta},c_j^{({\delta})},\Phi^{(\delta)})$ be the corresponding solution of \eqref{e4.1}-\eqref{e4.8} provided by Proposition~\ref{P:4.1}. From the estimates \eqref{e4.9} and \eqref{e4.16} we have that, up to a subsequence, we have, as $\delta\to 0$ that
\begin{equation}
\rho_\delta\to \rho \text{ in $C([0,T];L_{weak}^\gamma(\Omega))$,}
\end{equation}
\begin{equation}
u_\delta\rightharpoonup u \text{ weakly in $L^2(0,T;H_0^1(\Omega))$.}
\end{equation}
Also, by Lemma~\ref{l4.1} and Proposition~2.1 in \cite{F}, we have that
\begin{equation}
\rho_\delta^\gamma\to \overline{\rho_\delta^\gamma} \text{ weakly in $L^1((0,T)\times\Omega)$.}
\end{equation}
Moreover, due to Theorem~\ref{T1.2} (i.e. Lemma~\ref{l3.4}) we have that
\begin{align}
&c_j^{(\delta)}\to c_j , \text{ strongly in $L^1(0,T;L^p(\Omega))$ for $1\le p< 3$,}\\
&\nabla c_j^{(\delta)}\rightharpoonup \nabla c_j \text{ weakly in $L^2(0,T;L^1(\Omega))\cap L^1(0,T;L^q(\Omega))$, for $1\le q< 3/2$,}\label{e4.34}\\
&\nabla \Phi^{(\delta)}\rightharpoonup \nabla \Phi \text{ weakly-* in $L^\infty(0,T; L^2(\Omega))$,}\\
&\Phi^{(\delta)}\to \Phi \text{ strongly in $C([0,T];L^p(\Omega))$ for $1\le p < 6$,}\\
&c_j^{(\delta)}\nabla\Phi^{(\delta)}\rightharpoonup c_j\nabla\Phi \text{ weakly in $L^{r_1}((0,T)\times\Omega)$,}\label{e4.37}\\
&c_j^{(\delta)}u_\delta\rightharpoonup c_j u \text{ weakly in $L^{r_2}((0,T)\times\Omega)$,}
\end{align}
for some $r_1,r_2 > 1$, and the limit functions $u$, $c_1,...,c_N$ and $\Phi$ are a weak solution of the PNP subsystem.

By virtue of the momentum equation \eqref{e4.2} we have that
\begin{equation}
\rho_\delta u_\delta \to \rho u \text{ in $C([0,T];L_{weak}^{\frac{2\gamma}{\gamma+1}}(\Omega))$.} 
\end{equation}
Also,
\begin{equation}
\rho_\delta u_\delta\otimes u_\delta\to \rho u\otimes u, \text{ in $\mathcal{D}'((0,T)\times \Omega)$.}
\end{equation}

As a consequence, letting $\delta\to 0$ we have that the limit functions satisfy the following system in the sense of distributions over $(0,T)\times\Omega$:
\begin{align}
&\partial_t\rho+\div(\rho u)=0,\label{e4.100}\\ 
&\partial_t(\rho u) + \div(\rho u\otimes u)+\nabla (a\overline{\rho^\gamma}) = \div\mathbb{S}-\sum_{j=1}^N\nabla c_j-\sum_{j=1}^N z_j c_j \nabla\Phi,\label{e4.200}\\
&\partial_tc_j+\div(c_ju)=\div\left( \nabla c_j+z_jc_j\nabla\Phi \right),\label{e4.300}\\
&-\Delta\Phi=\sum_{j=1}^Nz_j c_j ,\label{e4.400}
\end{align}  
with the corresponding initial and boundary conditions:
\begin{equation}\label{e4.500}
(\rho,\rho u, c_1,...,c_N)|_{t=0}=(\rho_{0}, m_{0},c_1^{(0)},...,c_N^{(0)}), \quad \text{in $\Omega$},
\end{equation}
and 
\begin{align}
&u|_{\partial \Omega}=0,\label{e4.600}\\
&\left(\partial_\nu c_j  - c_j\partial_\nu\Phi\right)|_{\partial \Omega} = 0,\label{e4.700}\\
&(\partial_\nu\Phi + \tau\Phi)|_{\partial \Omega}=V.\label{e4.800}
\end{align}
Thus, the only thing left to conclude the proof of Theorem~\ref{T1.1} is to show the strong convergence of the fluid  density so that $a\overline{\rho^\gamma}=a\rho^\gamma$.

\subsection{Strong convergence of density}

Let $T_k$, $k\in \mathbb{N}$, be the cut-off function given by
\[
T_k(z)=kT\Big(\frac{z}{k}\Big), \text{ for $z\in\mathbb{R}$,}
\]  
where $T\in C^\infty(\mathbb{R})$ is a concave function such that
\[
T(z)=\begin{cases}
z, & z\le 1,\\
2, & z>3.
\end{cases}
\]
Since $\rho_\delta$ and $u_\delta$ satisfy the continuity equation \eqref{e4.1} in the sense of renormalized solutions, we have that
\begin{equation}\label{e4.49}
T_k(\rho_\delta)_t+ \div(T_k(\rho_\delta)u_\delta)+ \Big( T_k'(\rho_\delta)\rho_\delta-T_k(\rho_\delta)\Big)\div u_\delta = 0,
\end{equation}
in the sense of distributions. Passing to the limit as $\delta\to 0$ we have
\begin{equation}
\overline{T_k(\rho)}_t+ \div(\overline{T_k(\rho)}u)+ \overline{\Big( T_k'(\rho)\rho-T_k(\rho)\Big)\div u} = 0,
\end{equation}
also in the sense of distributions, where the overline stands for a weak limit of the sequence indexed by $\delta$. 
Note that $\overline{T_k(\rho)u}=\overline{T_k(\rho)}\, u$, due to \eqref{e4.49} we have that $T_k(\rho_\delta)\to \overline{T_k(\rho)}$ in $C([0,T];L_{weak}^\gamma(\Omega))$, and using the fact that $L^\gamma(\Omega)$ is compactly embedded in $H^{-1}(\Omega)$.

Now as in \cite{F,FNP} we define the operator $\mathcal{A}$ by its coordinates
\[
\mathcal{A}_j[v]:=\Delta^{-1}\partial_{x_j}v,\qquad j=1,2,3,
\]
where $\Delta^{-1}$ denotes the inverse Laplacian in $\mathbb{R}^3$. Equivalently, $\mathcal{A}_j$ may be defined through its Fourier symbol
\[
\mathcal{A}_j=\mathcal{F}^{-1}\left[ \frac{-i\xi_j}{|\xi|^2}\mathcal{F}[v] \right].
\]
As shown in \cite{F}, $\mathcal{A}$ satisfies 
\begin{align*}
&\|\mathcal{A}_j v\|_{W^{1,s}(\Omega)}\le C(s,\Omega)\|v\|_{L^s(\mathbb{R}^3)},\text{ for $1< s <\infty$},\\
&\|\mathcal{A}_j v\|_{L^q(\Omega)}\le C(q,s,\Omega)\|v\|_{L^s(\mathbb{R}^3)},\text{ for finite $q$, provided $\frac{1}{q}\ge \frac{1}{s}-\frac{1}{3}$,}\\
&\|\mathcal{A}_j v\|_{L^\infty(\Omega)}\le C(s,\Omega)\|v\|_{L^s(\mathbb{R}^3)},\text{ if $s> 3$.}
\end{align*}

Now we consider the function $\varphi$ given by its coordinates
\[
\varphi_j^\delta(t,x)=\zeta(t)\eta(x)\mathcal{A}_j[\xi T_k(\rho_\delta)], \quad j=1,2,3,
\]
where $\zeta\in C_0^\infty(0,T)$ and $\eta,\xi\in C_0^\infty(\Omega)$, and use it as a test function in the momentum equation \eqref{e4.2} to obtain the following identity
\begin{equation}\label{e4.51}
\int_0^T\int_\Omega \zeta\eta\xi\Big( a\rho_\delta^\gamma + \delta\rho_\delta^\beta - (\lambda+2\mu)\div u_\delta \Big)T_k(\rho_\delta)dx\, ds = \sum_{i=1}^8J_i^\delta,
\end{equation}
where
\begin{align*}
J_1^\delta &= \int_0^T\int_\Omega \zeta\mathbb{S}_\delta \nabla\eta \cdot \mathcal{A}[\xi T_k(\rho_\delta)] \, dx\, ds,\\
J_2^\delta &= -\int_0^T\int_\Omega \eta(a\rho_\delta^\gamma +\delta\rho_\delta^\beta )\nabla\eta\cdot \mathcal{A}[\xi T_k(\rho_\delta)] \, dx\, ds,\\
J_3^\delta &= \sum_{i=1}^N\int_0^T\int_\Omega  \, \zeta\eta  \mathcal{A}[\xi T_k(\rho_\delta)]\cdot (\nabla c_j^{\delta}+ z_j c_j^{\delta} \nabla\Phi^{\delta})  dx\, ds,\\
J_4^\delta &= -\int_0^T\int_\Omega \zeta ([\rho_\delta u_\delta\otimes u_\delta]\nabla\eta)\cdot \mathcal{A}[\xi T_k(\rho_\delta)] \, dx\, ds,\\
J_5^\delta &= -\int_0^T\int_\Omega \zeta\eta\rho_\delta u_\delta\cdot \mathcal{A}[ T_k(\rho_\delta)\nabla\xi\cdot u_\delta] \, dx\, ds,\\
J_6^\delta &= -\int_0^T\int_\Omega \partial_t\zeta\eta\rho_\delta u_\delta\cdot\mathcal{A}[\xi T_k(\rho_\delta)]   \, dx\, ds,\\
J_7^\delta &= -\int_0^T\int_\Omega \zeta\eta\rho_\delta u_\delta\cdot\mathcal{A}[\xi T_k(\rho_\delta)\div u_\delta]   \, dx\, ds,\\
J_8^\delta &= -\int_0^T\int_\Omega 2\mu \zeta \xi T_k(\rho_\delta)\left[ (\nabla\Delta^{-1}\nabla):(u_\delta\otimes \nabla\eta)+u_\delta\cdot\nabla\eta \right]  \, dx\, ds.
\end{align*}
Similarly, taking $\varphi$, given by
\[
\varphi_j(t,x)=\zeta(t)\eta(x)\mathcal{A}_j[\xi \overline{T_k(\rho)}], \quad j=1,2,3,
\]
as a test function in \eqref{e4.200}, we obtain
\begin{equation}\label{e4.52}
\int_0^T\int_\Omega \zeta\eta\xi\Big( a\overline{\rho^\gamma}  - (\lambda+2\mu)\div u \Big)\overline{T_k(\rho)}dx\, ds = \sum_{i=1}^8J_i,
\end{equation}
where
\begin{align*}
J_1 &= \int_0^T\int_\Omega \zeta\mathbb{S}\nabla\eta \cdot \mathcal{A}[\xi \overline{T_k(\rho)}] \, dx\, ds,\\
J_2 &= -\int_0^T\int_\Omega a \eta\, \overline{\rho^\gamma}\, \nabla\eta\cdot \mathcal{A}[\xi \overline{T_k(\rho)}] \, dx\, ds,\\
J_3 &= \sum_{i=1}^N\int_0^T\int_\Omega  \, \zeta\eta  \mathcal{A}[\xi \overline{T_k(\rho)}]\cdot (\nabla c_j+ z_j c_j \nabla\Phi)  dx\, ds,\\
J_4 &= -\int_0^T\int_\Omega \zeta ([\rho u\otimes u]\nabla\eta)\cdot \mathcal{A}[\xi \overline{T_k(\rho)}] \, dx\, ds,\\
J_5 &= -\int_0^T\int_\Omega \zeta\eta\, \rho u\cdot \mathcal{A}[ \overline{T_k(\rho)}\nabla\xi\cdot u] \, dx\, ds,\\
J_6 &= -\int_0^T\int_\Omega \partial_t\zeta\eta\, \rho u\cdot\mathcal{A}[\xi \overline{T_k(\rho)}]   \, dx\, ds,\\
J_7 &= -\int_0^T\int_\Omega \zeta\eta\rho u\cdot\mathcal{A}[\xi \overline{T_k(\rho)\div u}]   \, dx\, ds,\\
J_8 &= -\int_0^T\int_\Omega 2\mu \zeta \xi \overline{T_k(\rho)}\left[ (\nabla\Delta^{-1}\nabla):(u\otimes \nabla\eta)+u\cdot\nabla\eta \right]  \, dx\, ds.
\end{align*}
As in \cite{F}, we have that all the terms on the right-hand-side of \eqref{e4.51} converge to their counterpart in \eqref{e4.52}. The only difference when compared to the context of \cite{F} is the form of the external force, which, in our present situation, corresponds to $J_3^\delta$, and which depends explicitly on the ion densities and on the self consistent potential that they generate. However, \eqref{e4.34} and \eqref{e4.37} combined with the properties of the operator $\mathcal{A}$, imply that $J_3^\delta$ converges to $J_3$. In summary, we obtain the following.

\begin{lemma}\label{l4.2}
There is a subsequence $\delta_n\to 0$, such that for any $\zeta\in C_0^\infty(0,T)$ and $\eta,\xi\in C_0^\infty(\Omega)$ we have
\begin{multline}
\lim_{\delta_n\to 0}\int_0^T\int_\Omega \zeta\eta\xi\Big( a\rho_{\delta_n}^\gamma + {\delta_n}\rho_{\delta_n}^\beta - (\lambda+2\mu)\div u_{\delta_n} \Big)T_k(\rho_{\delta_n})dx\, ds \\
= \int_0^T\int_\Omega \zeta\eta\xi\Big( a\overline{\rho^\gamma}  - (\lambda+2\mu)\div u \Big)\overline{T_k(\rho)}dx\, ds.
\end{multline}
In particular, we have that
\begin{equation}\label{e4.540}
\overline{a\rho^\gamma T_k(\rho)}-\overline{a\rho^\gamma}\, \overline{T_k(\rho)} = (\lambda+2\mu)\Big( \overline{T_k(\rho)\div u} - \overline{T_k(\rho)}\, \div u \Big), \text{ in $(0,T)\times K$},
\end{equation}
for any compact $K\subset \Omega$.
\end{lemma}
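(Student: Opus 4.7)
The plan is to establish $J_i^{\delta_n} \to J_i$ as $n \to \infty$ for each $i=1,\ldots,8$, from which the first displayed identity follows at once by passing to the limit in \eqref{e4.51}: the left-hand side converges by the very construction of the weak limits $\overline{\rho^\gamma T_k(\rho)}$ and $\overline{T_k(\rho)\div u}$, together with the uniform bound \eqref{e4.16} of Lemma~\ref{l4.1} which ensures the artificial-pressure piece $\delta\rho_\delta^\beta T_k(\rho_\delta)$ drops out. Identity \eqref{e4.540} will then result from subtracting \eqref{e4.52} from the limit of \eqref{e4.51} and localizing the cutoffs $\zeta,\eta,\xi$ to approach indicators of an arbitrary compact $K\subset(0,T)\times\Omega$.

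The engine for every term-by-term convergence is the strong compactness of
\[
W_\delta := \mathcal{A}[\xi T_k(\rho_\delta)] \qquad \text{in $C([0,T];L^p(\Omega))$ for every finite $p$,}
\]
uniformly in $\delta$. Indeed, $T_k(\rho_\delta)\in L^\infty$ uniformly, so the gain-of-one-derivative property of $\mathcal{A}$ delivers a uniform bound on $W_\delta$ in $L^\infty(0,T;W^{1,p}(\Omega))$, while the renormalized continuity equation \eqref{e4.49} provides a uniform bound on $\partial_t T_k(\rho_\delta)$ in $L^r(0,T;W^{-1,q}(\Omega))$ for suitable $r,q>1$, which transfers to $\partial_t W_\delta$ in $L^r(0,T;L^q(\Omega))$. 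Aubin--Lions then yields the claimed strong compactness, and the limit must coincide with $\mathcal{A}[\xi\overline{T_k(\rho)}]$. With this in hand, the terms $J_1^{\delta_n},J_2^{\delta_n},J_4^{\delta_n},J_5^{\delta_n},J_6^{\delta_n},J_8^{\delta_n}$ pass to the limit by weak-strong pairing exactly as for the barotropic Navier--Stokes system in \cite{F}; note that \eqref{e4.16} supplies the equiintegrability of $a\rho_\delta^\gamma+\delta\rho_\delta^\beta$ needed in $J_2^{\delta_n}$. For $J_7^{\delta_n}$, I apply $\mathcal{A}$ to the $L^2$-bounded product $\xi T_k(\rho_\delta)\div u_\delta$, whose weak limit is $\xi\,\overline{T_k(\rho)\div u}$; the gain of derivative plus Rellich upgrades this to strong $L^2$ convergence of $\mathcal{A}[\xi T_k(\rho_\delta)\div u_\delta]$, which pairs with $\rho_\delta u_\delta$ (converging in $C([0,T];L^{2\gamma/(\gamma+1)}_{weak}(\Omega))$ and hence in $L^2(0,T;H^{-1}(\Omega))$) to close the term.

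The genuinely new feature compared to \cite{F,FNP} is the ionic forcing term $J_3^{\delta_n}$. For this I will use that $W_\delta$ is compact in $C([0,T];L^\infty(\Omega))$ (take $p>3$ in the $W^{1,p}$ bound above) and pair this strong convergence against the weak convergences \eqref{e4.34} of $\nabla c_j^{(\delta_n)}$ and \eqref{e4.37} of $c_j^{(\delta_n)}\nabla\Phi^{(\delta_n)}$, both furnished by Theorem~\ref{T1.2}, to conclude $J_3^{\delta_n}\to J_3$. The step I expect to be most delicate is the verification of the Aubin--Lions hypotheses for $W_\delta$ under the sharp constraint $\gamma>3/2$: specifically, locating compatible exponents $r,q$ so that the dual pairings involved in each $J_i^{\delta_n}$ are well defined and stable under the limit. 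This exponent bookkeeping is classical and carries over from \cite{F,FNP} for the hydrodynamic terms, while \eqref{e4.17}--\eqref{e4.17'} have been arranged precisely so that $J_3^{\delta_n}$ absorbs into the same scheme. Once all $J_i^{\delta_n}\to J_i$ is verified, \eqref{e4.540} follows as explained in the first paragraph.
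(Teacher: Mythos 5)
Your proposal is correct and follows the same strategy as the paper: you pass to the limit term-by-term in the effective-viscous-flux identity \eqref{e4.51}, defer the purely hydrodynamic terms $J_1^\delta,J_2^\delta,J_4^\delta,\dots,J_8^\delta$ to the Feireisl framework, and isolate the ionic forcing $J_3^\delta$ as the only genuinely new term, handling it---exactly as the paper does---by combining the compactness of $\mathcal{A}[\xi T_k(\rho_\delta)]$ (coming from the boundedness properties of $\mathcal{A}$ together with the renormalized continuity equation) with the weak convergences \eqref{e4.34} and \eqref{e4.37} supplied by Theorem~\ref{T1.2}. The only imprecise point is the claim of strong $L^2$ convergence of $\mathcal{A}[\xi T_k(\rho_\delta)\div u_\delta]$ in $J_7^\delta$ (there is no time-derivative bound to make that compact), but since you also invoke the strong $C([0,T];H^{-1})$ convergence of $\rho_\delta u_\delta$, the weak--strong pairing still closes and the argument stands.
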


Now, with Lemma~\ref{l4.2} at hand, we may invoke the general reasoning from Chapter~12 of \cite{FKP} in order to conclude that, up to a subsequence, the densities $\rho_\delta$ are strongly convergent. Indeed, as shown in Section~12.2.2 of \cite{FKP}, Lemma~\ref{l4.2} guarantees the existence of a constant $C>0$ such that
\begin{equation}\label{e4.54}
\sup_{k\in\mathbb{N}}\limsup_{\delta_n\to 0}\int_0^T\int_\Omega |T_k(\rho_{\delta_n})-T_k(\rho)|^{\gamma+1}dx\, ds\le C.
\end{equation}
Moreover, \eqref{e4.54} combined with lemma~10 in \cite{FKP} implies that the limit functions $\rho$ and $u$ solve the continuity equation \eqref{e4.100} in the sense of renormalized solutions, that is, the following equation 
\begin{equation}\label{e4.renorm}
b(\rho)_t + \div(b(\rho u) + \Big(b'(\rho)\rho - b(\rho)\Big)\div u = 0,
\end{equation}
is satisfied in the sense of distributions, for any $b\in C^1(0,\infty)\cap C[0,\infty)$, such that
\begin{equation}\label{e4.condrenorm}
|b'(z)z|\le cz^{\frac{\gamma}{2}},\text{ for $z$ larger that some positive constant $z_0$,}
\end{equation}
cf. Remark~\ref{r1.2}.

Next, for each $k$, we consider the function 
\[
L_k(z)=z\int_1^z\frac{T_k(r)}{r^2}dr, \quad z\ge 0,
\]
which is a convex function and can be written as 
\[
L_k(z)=c_kz + b_k(z),
\]
for some function $b$ that satisfies \eqref{e4.condrenorm}. Note also that $zL_k'(z)-L_k(z)=T_k(z)$. Then, since $(\rho_\delta, u_\delta)$ satisfy the continuity equation in the sense of renormalized solutions, we have  that the following equations are satisfied in the sense of distributions:
\begin{equation}
L_k(\rho_\delta)_t + \div(L_k(\rho_\delta u_\delta) + T_k(\rho_\delta)\div u_\delta = 0.
\end{equation}
Integrating in $\Omega$ and letting $k\to\infty$,
\[
\frac{d}{dt}\int_\Omega \overline{L_k(\rho)}\, dx + \int_\Omega \overline{T_k(\rho)\div u}\, dx = 0, \quad \text{a.a. }t\in (0,T).
\]
Similarly, as $(\rho, u)$ also satisfy the continuity equation in the sense of distributions, we also find the identity
\[
\frac{d}{dt}\int_\Omega L_k(\rho)\, dx + \int_\Omega T_k(\rho)\div u\, dx = 0, \quad \text{a.a. }t\in (0,T).
\]

Consequently,
\begin{multline*}
\int_\Omega \Big(\overline{L_k(\rho)}-L_k(\rho)\Big)(t,x)\, dx =
- \int_0^t\int_\Omega \Big(\overline{T_k(\rho)\div u}- \overline{T_k(\rho)}\div u\Big)\, dx \, ds\\
\quad + \int_0^t\int_\Omega\Big(T_k(\rho)\div u - \overline{T_k(\rho)}\div u\Big)\, dx\, ds\\
\end{multline*}
and by \eqref{e4.540},
\begin{equation}\label{e4.59}
\int_\Omega \Big(\overline{L_k(\rho)}-L_k(\rho)\Big)(t,x)\, dx \le \int_0^t\int_\Omega\Big( T_k(\rho)\div u -\overline{T_k(\rho)}\div u \Big)\, dx\, ds.
\end{equation}
Regarding the integral on the right-hand-side of the inequality, we see that
\begin{align*}
\int_0^t\int_\Omega\Big|T_k(\rho)\div u - &\overline{T_k(\rho)}\div u\Big|\, dx\, ds \\
&\le \|\div u\|_{L^2((0,T)\times\Omega)}\| T_k(\rho) - \overline{T_k(\rho)} \|_{L^2(0,T)\times\Omega)}\\
    &\le C\| T_k(\rho) - \overline{T_k(\rho)} \|_{L^1(0,T)\times\Omega)}^\omega \| T_k(\rho) - \overline{T_k(\rho)} \|_{L^{\gamma+1}(0,T)\times\Omega)}^{1-\omega},
\end{align*}
for some $\omega\in (0,1)$. Noting that the weak lower semicontinuity of the norm yields
\begin{align*}
\| T_k(\rho) - \overline{T_k(\rho)} \|_{L^1(0,T)\times\Omega)}&\le \liminf_{\delta\to 0}\| T_k(\rho) - T_k(\rho_\delta) \|_{L^1(0,T)\times\Omega)}\\
   &\le k^{1-\gamma}\sup_\delta  \|\rho_\delta\|_{L^\gamma((0,T)\times\Omega)}\\
   &\le Ck^{1-\gamma},
\end{align*}
and using \eqref{e4.54}, we may take the limit as $k\to\infty$ in \eqref{e4.59} to obtain
\begin{equation}\label{e4.60}
\int_\Omega \Big(\overline{\rho\ln(\rho)}- \rho\ln(\rho)\Big)(t,x)\, dx\le 0.
\end{equation}
Finally, since the function $z\to z\ln(z)$ is continuous and convex, by Theorem~2.11 in \cite{F} we conclude that
\begin{equation}\label{e4.61}
\rho_{\delta_n}\to \rho\text{ in $L^1((0,T)\times\Omega)$}.
\end{equation} 

Thus, $\overline{a\rho^\gamma}=a\rho^\gamma$ implying that equation \eqref{e4.200} is exactly \eqref{e1.2}, which concludes the proof of Theorem~\ref{T1.1}.

%

\section{Large-Time Behavior}\label{S50}

This section is dedicated to the proof of Theorem~\ref{T1.ltb}, regarding the large-time behavior of the global finite energy weak solutions of the compressible PNPNS system. We begin by analyzing the convergence of a sequence of time shifts of the solutions, inspired by the arguments of Feireisl and Petzeltov\'{a} in \cite{FP} for the large-time behavior of weak solutions of the compressible Navier-Stokes equations.

For a sequence $\tau_n\to \infty$, we consider the sequence of shifts
\begin{equation*}
\begin{cases}
\rho_n(t,x)=\rho(\tau_n+t,x),\\
u_n(t,x)=u(\tau_n+t,x),\\
c_j^{(n)}(t,x)=c_j(\tau_n+t,x),\\
\Phi^{(n)}(t,x)=\Phi(\tau_n+t,x).
\end{cases}
\end{equation*}
By the energy inequality and estimate \eqref{e1.T_Phi}, we have that
\begin{align*}
 &\rho_n \text{ is bounded in $L^\infty(0,1;L^\gamma(\Omega))$},\\
&\sqrt{\rho_n}u_n \text{ is bounded in $L^\infty(0,1;L^2(\Omega))$},\\
&\rho_n u_n \text{ is bounded in $L^\infty(0,1;L^{\frac{2\gamma}{\gamma+1}}(\Omega))$},\\
&\sqrt{c_j^{(n)}} \text{ is bounded in $L^2(0,1;H^1(\Omega))$},\\
&\Phi^{(n)} \text{ is bounded in $L^\infty(0,1;H^1(\Omega))$, }\\
&\sqrt{c_j^{(n)}}\nabla\Phi^{(n)} \text{ is bounded in $L^2((0,T)\times\Omega)$}.
\end{align*}
Moreover, 
\begin{equation}\label{e50.1}
\lim_{n\to \infty} \int_0^1\left(\|\nabla u_n\|_{L^2(\Omega)}^2+\sum_{j=1}^N\left\|\nabla \sqrt{c_j^{(n)}}+z_j\sqrt{c_j^{(n)}}\nabla\Phi^{(n)}\right\|_{L^2(\Omega)}^2 \right)dt =0,
\end{equation}
and by the Poncaré inequality we have that
\[
\lim_{n\to\infty}\int_0^1\|u_n\|_{L^2(\Omega)}^2 dt = 0.
\]
In particular, we may apply Theorem~\ref{T1.2} to conclude that the sequence $(c_1^{(n)},...,c_N^{(n)},\Phi^{(n)})$ converges to a solution $(c_1^{(s)},...,c_N^{(s)},\Phi^{(s)})$ of the PNP subsystem with $u=0$.

Now, as in the proof of Lemma~\ref{l3.6} we see that $\sqrt{c_j^{(n)}+1}$ satisfies the following equation in $\mathcal{D}'((0,1)\times\Omega)$
\begin{multline}\label{e50.2}
2\partial_t\sqrt{c_j^{(n)}+1} =-\div\left(\Bigg(\frac{c_j^{(n)}}{c_j^{(n)}+1}\Bigg)^{1/2} \sqrt{c_j^{(n)}} u_n \right) - \Bigg(\frac{c_j^{(n)}}{(c_j^{(n)}+1)}\Bigg)^{3/2} u_n\cdot\nabla \sqrt{c_j^{(n)}}\\
	 + \div\left( \Bigg(\frac{c_j^{(n)}}{c_j^{(n)}+1}\Bigg)^{1/2}\left(2\nabla \sqrt{c_j^{(n)}} + z_j \sqrt{c_j^{(n)}}\nabla\Phi^{(n)}\right) \right) \\
	+  \frac{c_j^{(n)}}{(c_j^{(n)}+1)^{3/2}}\nabla\sqrt{c_j^{n}}\cdot\left(\nabla\sqrt{c_j^{(n)}} + z_j\sqrt{c_j^{(n)}}\nabla\sqrt{c_j^{(n)}}\cdot\nabla\Phi^{(n)}\right).
\end{multline}
Note that all the terms on the right-hand-side of \eqref{e50.2} converge to zero in the sense of distributions, by virtue of \eqref{e50.1}. We conclude that $\partial_t\sqrt{c_j^{(s)}+1} =0$ in the sense of distributions, which implies that ${c_j^{(s)}+1}$, and therefore also $c_j^{(s)}$, does not depend on $t$. As a consequence, the equation \eqref{e1.3} for the limiting functions reduces to
\begin{equation}\label{e50.5}
\div\left(D_j \nabla c_j^{(s)} + D_jz_j c_j^{(s)}\nabla\Phi^{(s)}\right)=0.
\end{equation}

Next, we claim that 
\begin{equation}\label{e50.6}
\int_\Omega \frac{D_j}{c_j^{(s)}}\left|\nabla c_j^{(s)}+z_jc_j\nabla\Phi^{(s)}\right|^2 dx = 0.
\end{equation}
Indeed, for $\delta>0$, we may use the function $\ln(c_j^{s}+\delta)+z_j\Phi$ as a test function in \eqref{e50.5} and, recalling that the functions $c_j^{(s)}$ (and therefore also $\Phi^{(s)}$ are independent of $t$), we obtain that
\begin{align*}
0&=\int_\Omega D_j\left(\nabla c_j^{(s)}+z_jc_j^{(s)}\nabla\Phi^{(s)}\right)\cdot\left(\frac{\nabla c_j^{(s)}}{c_j^{(s)}+\delta}+z_j\nabla\Phi^{(s)}\right) dx\\
 &=\int_\Omega \frac{D_j}{c_j^{(s)}+\delta}|\nabla c_j^{(s)}+z_j c_j^{(s)}\nabla\Phi^{(s)}|^2 dx \\
 &\qquad + \int_\Omega D_j\left(\nabla c_j^{(s)}+z_j c_j^{(s)}\nabla\Phi^{(s)}\right)\cdot z_j\left(1-\frac{c_j^{(s)}}{c_j^{(s)}+\delta}\right)\nabla\Phi^{(s)}dx.
\end{align*}
Then, 
\begin{align*}
&\int_\Omega \frac{D_j}{c_j^{(s)}+\delta}|\nabla c_j^{(s)}+z_j c_j^{(s)}\nabla\Phi^{(s)}|^2 dx \\
&\qquad =  -\int_\Omega D_j\left(\nabla c_j^{(s)}+z_j c_j^{(s)}\nabla\Phi^{(s)}\right)\cdot z_j\left(1-\frac{c_j^{(s)}}{c_j^{(s)}+\delta}\right)\nabla\Phi^{(s)}dx\\
&\qquad \le \frac{1}{2}\int_\Omega \frac{D_j}{c_j^{(s)}+\delta}|\nabla c_j^{(s)}+z_j c_j^{(s)}\nabla\Phi^{(s)}|^2 dx + C\int_\Omega \left(1-\frac{c_j^{(s)}}{c_j^{(s)}+\delta}\right)^2 (c_j^{(s)}+\delta)|\nabla\Phi^{(s)}|^2 dx,
\end{align*}
and we arrive at
\begin{equation}\label{e50.800}
\int_\Omega \frac{D_j}{c_j^{(s)}+\delta}|\nabla c_j^{(s)}+z_j c_j^{(s)}\nabla\Phi^{(s)}|^2 dx \le C\int_\Omega \left(1-\frac{c_j^{(s)}}{c_j^{(s)}+\delta}\right)^2 (c_j^{(s)}+1)|\nabla\Phi^{(s)}|^2 dx.
\end{equation}
Now, on the one hand, since we know that $(c_j^{(s)}+1)|\nabla \Phi|^2$ is integrable, the right-hand-side of \eqref{e50.800} converges to $0$ by the dominated convergence theorem. On the other hand, the left-hand-side of \eqref{e50.800} converges to 
\[
\int_\Omega \frac{D_j}{c_j^{(s)}}\left|\nabla c_j^{(s)}+z_jc_j\nabla\Phi^{(s)}\right|^2 dx
\]
by the monotone convergence theorem,  proving \eqref{e50.6}.
%

We infer from \eqref{e50.6} that
\begin{equation}\label{e50.700}
\nabla c_j^{(s)}+z_j c_j\nabla\Phi^{(s)} = 0, \text{ a.e. in $\Omega$,}
\end{equation}
which implies that there is a positive constant $Z_j$ such that
\begin{equation}\label{e50.80}
Z_jc_j^{(s)}(x)=e^{-z_j\Phi^{(s)}(x)}, \text{ a.e. in $\Omega$.}
\end{equation}
Since $c_j^{(n)}\to c_j^{(s)}$ strongly in $L^1((0,1)\times\Omega)$ and using the fact that the initial masses $I_j^0:=\int_\Omega c_j^{(0)} dx$ are conserved in time, we conclude that
\[
Z_j=(I_j^0)^{-1} \int_\Omega e^{-z_j\Phi^{(s)}(x)} dx.
\]
In other words, the functions $(c_1^{(s)},...,c_N^{(s)},\Phi^{(s)})$ are a solution of the Poisson-Boltzmann equation:
\begin{equation}\label{e50.8}
-\kappa\Delta \Phi^{(s)}=\sum_{j=1}^Nz_j I_j^{(0)}\frac{e^{-z_j\Phi^{(s)}}}{\int_\Omega e^{-z_j\Phi^{(s)}}(x) dx},
\end{equation}
with
\begin{equation}\label{e50.9}
(\partial_\nu\Phi^{(s)}+\tau\Phi^{(s)})|_{\partial\Omega}=V.
\end{equation}

We point out that solutions of this boundary value problem are unique. The proof of this fact goes by the same lines of that of Theorem 10 in Appendix A of \cite{CI1} and we refer to it for the details. 

\begin{remark}
In \cite{CI1} the boundary condition for the Poisson-Boltzmann equation \eqref{e50.8} is a Dirichlet condition and not a Robin boundary condition as \eqref{e50.9}. However, the proof of uniqueness can be modified easily. Indeed, the proof consists of considering the equation satisfied by the difference of two solutions, $\Psi$, multiplying by $\Psi$ and integrating by parts to find that $\kappa\|\nabla \Psi\|_{L^2(\Omega)}^2\le 0$. In the case considered in \cite{CI1}, $\Psi$ satisfies a homogeneous Dirichlet boundary condition. In our case, $\Psi$ satisfies a homogeneous Robin boundary condition, i.e., $(\partial_\nu \Psi + \tau\Psi)|_{\partial\Omega}=0$. And in this case, arguing as in \cite{CI1}, we obtain that $\kappa\|\nabla \Psi\|_{L^2(\Omega)}^2+\kappa\tau \|\Psi\|_{L^2(\partial\Omega)}^2\le 0$, which also leads to uniqueness.
\end{remark}

In conclusion, we have that the shifts $c_j^{(n)}$ converge in $L^1((0,1)\times\Omega)$ to $c_j^{(s)}$, given by \eqref{e50.80}, where $\Phi^{(s)}$ is the unique solution of the Poisson-Boltzmann equation \eqref{e50.8} with \eqref{e50.9}, and this is true for any sequence $\tau_n\to\infty$.

Let us now analyze the convergence of the fluid density. Using \eqref{e50.1}, by the Sobolev inequality and H\"{o}lder's inequality, we have that
\[
\lim_{n\to\infty}\int_0^1\left( \|\rho_n|u_n|^2\|_{L^{\frac{3\gamma}{\gamma+3}}}+\|\rho_n |u_n|\|_{^{\frac{6\gamma}{\gamma+6}}} \right)dt =0.
\]
Due to the boundedness of $\rho_n$ in $L^\infty(0,1;L^\gamma(\Omega))$ there is a subsequence (not relabelled) and a function $\rho_s$ such that
\[
\rho_n\to \rho_s \quad\text{weakly in $L^\gamma((0,1)\times\Omega)$.}
\]
Thus, taking a test function of the form $\psi(t)\phi(x)$ in the continuity equation \eqref{e1.1} and integrating over $[\tau_n,\tau_n+1]$, we see that
\[
\int_0^1\int_\Omega \rho_s\phi dx\psi'(t) dt = 0,
\]
which means that $\rho_s$ does not depend on $t$.

Also, by a similar procedure to that of Subsection~\ref{S4.1} (cf. \cite[Lemma~4.1]{FNP}), there is a constant $\vartheta>0$ such that
\[
\rho_n^{\gamma+\vartheta} \quad\text{is bounded in $L^1((0,1)\times\Omega)$,}
\]
and thus, we conclude that there is some function $\overline{p}$ such that
\begin{equation}\label{e50.2000}
a\rho^\gamma \to \overline{p}\quad\text{ weakly in $L^1((0,1)\times\Omega)$.}
\end{equation}
Now we may take the limit as $n\to\infty$ in the momentum equation in order to obtain that
\begin{equation}\label{e50.200}
\nabla\overline{p}=\sum_{j=1}^N\nabla c_j^{s}-\sum_{j=1}^Nz_j c_j^{(s)}\nabla\Phi^{(s)}, \quad\text{in $\mathcal{D}'((0,1)\times\Omega)$,}
\end{equation}
We  observe that \eqref{e50.700} and \eqref{e50.200}, implies that 
\[
\nabla\overline{p} = 0,
\]
which means that $\overline{p}$ is independent of $x$.

At this point, the arguments in \cite{FP} based on the $L^p$-version of the celebrated div-curl lemma and the use of the renormalized continuity equation \eqref{e1.renorm}, may be applied line by line to conclude that the convergence in \eqref{e50.2000} is actually strong. Hence $\overline{p}=a\rho_s^\gamma$ and we have that
\begin{equation}
\rho_n\to \rho_s\text{ strongly in $L^\gamma((0,1)\times\Omega)$.}
\end{equation}
We conclude that $\rho_s$ is a constant and, as in \cite{FP}, we have that this constant is the same for any sequence $\tau_n\to \infty$. 

Using the continuity equation \eqref{e1.1} we we may argue that
\[
\rho(t)\to \rho_s\text{ weakly in $L^\gamma(\Omega)$ as $t\to\infty$.}
\]
Indeed, for any sequence $t_n\to \infty$ we have that
\[
\int_{t_n}^{t_n+1} \int_\Omega \rho \phi dx dt - \int_\Omega \rho(t_n)\phi dx = \int_{t_n}^{t_n+1}\int_\Omega \rho u\cdot\nabla \phi dx dt \to 0, \text{ as $n\to\infty$}.
\]

In order to finish the proof, we need to show that $\displaystyle \lim_{t\to\infty}\rho(t)\to \rho_s$ and that $\displaystyle \lim_{t\to\infty}c_j(t)=c_j^{(s)}$ strongly. We do so by adapting the concluding argument from the proof of the analogous result in \cite{FP}.

Now, using equation \eqref{e1.3}, together with the fact that $u_s=0$ and that $\nabla c_j^{(s)}+z_jc_j^{(s)}\nabla\Phi^{(s)} =0$, we have that
\[
c_j(t)\to c_j^{(s)} \text{ weakly in $L^1(\Omega)\ln(L^1(\Omega))$}.
\]
Also, from the energy inequality \eqref{e1.energy} we have that the energy converges to a finite constant, when $t\to\infty$. Let
\[
E_\infty:=\overline{\lim_{t\to\infty}}E(t).
\]
Due to the strong convergence of $\rho_n$ to $\rho_s$ in $L^\gamma((0,1)\times\Omega)$, of $\rho_n |u_n|^2$ to $0$ in $L^1((0,1)\times \Omega)$, of $c_j^{(n)}$ to $c_j^{(s)}$ a.e. in $(0,1)\times\Omega$ and of $\Phi^{(n)}$ to $\Phi^{(s)}$ in $L^2((0,1)\times H^1(\Omega))$, as $n\to\infty$, we note that
\begin{align*}
E_\infty &= \int_\Omega\Bigg( \frac{a}{\gamma-1}\rho_s^\gamma + \sum_{j=1^N}(c_j^{(s)}\ln(c_j^{(s)})-c_j^{(s)}+1)+\frac{\kappa}{2}|\nabla\Phi^{(s)}|^2 \Bigg) dx + \frac{\kappa\tau}{2}\int_{\partial \Omega}|\Phi^{(s)}|^2 dS\\
  &\le \liminf_{t\to\infty}\Bigg[ \int_\Omega\Bigg( \frac{a}{\gamma-1}\rho(t)^\gamma + \sum_{j=1^N}(c_j(t)\ln(c_j(t))-c_j(t)+1)+\frac{\kappa}{2}|\nabla\Phi(t)|^2 \Bigg) dx \\
  &\qquad\qquad\qquad\qquad\qquad\qquad+ \frac{\kappa\tau}{2}\int_{\partial \Omega}|\Phi(t)|^2 dS\Bigg]\\
  &\le \overline{\lim_{t\to\infty}}\Bigg[ \int_\Omega\Bigg(\frac{1}{2}\rho(t)|u(t)|^2 +\frac{a}{\gamma-1}\rho(t)^\gamma + \sum_{j=1^N}(c_j(t)\ln(c_j(t))-c_j(t)+1)+\frac{\kappa}{2}|\nabla\Phi(t)|^2 \Bigg) dx \\
  &\qquad\qquad\qquad\qquad\qquad\qquad+ \frac{\kappa\tau}{2}\int_{\partial \Omega}|\Phi(t)|^2 dS\Bigg]\\
  &=\overline{\lim_{t\to\infty}}E(t)\\[5mm]
  &=E_\infty.
\end{align*}
Therefore, we have that
\begin{align}
\lim_{t\to\infty}F[\rho(t),c_1(t),...,c_N(t),\Phi(t)] = F[\rho_s,c_1^{(s)},...,c_N^{(s)},\Phi^{(s)}],
\end{align}
where, $F$ is the following functional 
\[
F[\rho_s,c_1,...,c_N,\Phi]:=\int_\Omega\Bigg( \frac{a}{\gamma-1}\rho^\gamma + \sum_{j=1^N}(c_j\ln(c_j)-c_j+1)+\frac{\kappa}{2}|\nabla\Phi|^2 \Bigg) dx + \frac{\kappa\tau}{2}\int_{\partial \Omega}|\Phi|^2 dS.
\]
By uniform convexity, we conclude that indeed, as $t\to\infty$, $\rho(t)\to \rho_s$ strongly in $L^\gamma(\Omega)$, $c_j(t)\to c_j^{(s)}$ strongly in $L^1(\Omega)$ and $\Phi(t)\to \Phi^{(s)}$ strongly in $H^1(\Omega)$ (cf. Theorem~2.11 in \cite{F}), while $u\to 0$ strongly in $L^2(\Omega)$, which concludes the proof of Theorem~\ref{T1.ltb}.

%

\section{The Incompressible Limit}\label{S5}

In this Section we discuss the incompressible limit for the PNPNS system as an application of Theorem~\ref{T1.2}. Our goal is to show that, in a large time scale, for small velocity, small ion concentrations and if the fluid density is close to a constant, the weak solutions of the compressible PNPNS system behave asymptotically as weak solutions of the corresponding incompressible system. 

More precisely, let $(\tilde{\rho},\tilde{u}, \tilde{c}_1,...,\tilde{c}_N,\tilde{\Phi})$ be a weak solution of the compressible PNPNS with certain physical coefficients, that is, of the system: 
\begin{align}
&\partial_t\tilde{\rho}+\div(\tilde{\rho} \tilde{u})=0,\label{e1.1t}\\
&\partial_t(\tilde{\rho} \tilde{u}) + \div(\tilde{\rho} \tilde{u}\otimes \tilde{u})+a\nabla\tilde{\rho}^\gamma  = \div\tilde{\mathbb{S}}-\sum_{j=1}^n\nabla\tilde{c}_j+\tilde{\kappa}\Delta\tilde{\Phi}\nabla\tilde{\Phi},\label{e1.2t}\\
&\partial_t \tilde{c}_{j}+\div(\tilde{c}_j\tilde{u})=\div\left( \tilde{D}_j\nabla\tilde{c}_j+\tilde{D}_jz_j\tilde{c}_j\nabla\tilde{\Phi} \right),\quad j=1,...,N\label{e1.3t}\\
&-\tilde{\kappa} \Delta\tilde{\Phi}=\sum_{j=1}^Nz_j\tilde{c}_j,\label{e1.5t}
\end{align}  
where 
\begin{equation*}
\tilde{\mathbb{S}}=\tilde{\lambda}(\div \tilde{u})I + \tilde{\mu}(\nabla \tilde{u} + (\nabla \tilde{u})^\perp).
\end{equation*}
 Then, for small $\varepsilon\in (0,1)$ we consider the scaling
\[
\tilde{\rho}=\rho(\varepsilon t,x),\quad \tilde{u}=\varepsilon u(\varepsilon t,x), \quad \tilde{c}_j=\varepsilon^2 c_j(\varepsilon t,x),\quad \tilde{\Phi}=\Phi(\varepsilon t,x).
\]
We also take the viscosity coefficients, the diffusion coefficients and the dielectric permittivity as
\[
\tilde{\mu}=\varepsilon\mu,\quad \tilde{\lambda}=\varepsilon \lambda, \quad \tilde{D}_j=\varepsilon D_{j},\quad \tilde{\kappa}=\varepsilon^2\kappa,
\]
where the scaled coefficients are such that
 $\mu>0$, $\lambda+\frac{2}{3}\mu\ge 0$, $D_j >0$ and $\kappa>0$.
Then, the scaled system reads as
\begin{align}
&\partial_t\rho_\varepsilon+\div(\rho_\varepsilon u_\varepsilon)=0,\label{e1.1c}\\
&\partial_t(\rho_\varepsilon u_\varepsilon) + \div(\rho_\varepsilon u_\varepsilon\otimes u_\varepsilon)+\frac{a}{\varepsilon^2}\nabla\rho_\varepsilon^\gamma  = \div\mathbb{S}_\varepsilon-\sum_{j=1}^n\nabla c_j^{(\varepsilon)}+\kappa\Delta\Phi^{(\varepsilon)}\nabla\Phi^{(\varepsilon)},\label{e1.2c}\\
&\partial_t c_{j}^{(\varepsilon)}+\div(c_j^{(\varepsilon)}u_\varepsilon)=\div\left( D_{j}\nabla c_j^{(\varepsilon)} +D_{j}z_jc_j^{(\varepsilon)}\nabla\Phi^{(\varepsilon)} \right),\quad j=1,...,N\label{e1.3c}\\
&-\kappa \Delta\Phi^{(\varepsilon)}=\sum_{j=1}^Nz_jc_j^{(\varepsilon)},\label{e1.5c}
\end{align}  
with
\begin{equation}
\mathbb{S}_\varepsilon=\lambda(\div u_\varepsilon)I + \mu(\nabla u_\varepsilon + (\nabla u_\varepsilon)^\perp).
\end{equation}
Accordingly, the corresponding initial and boundary conditions read as follows:
\begin{align}
&(\rho_\varepsilon,\rho_\varepsilon u_\varepsilon,c_j^{(\varepsilon)})(0,\cdot)=(\rho_{0,\varepsilon},m_{0,\varepsilon},c_{j}^{0,\varepsilon})(\cdot),\label{e1.9c}\\
&u_\varepsilon|_{\partial \Omega}=0,\label{e1.10c}\\
&D_{j}\left(\partial_\nu c_j^{(\varepsilon)} + z_jc_j^{(\varepsilon)}\partial_\nu\Phi^{(\varepsilon)}\right)|_{\partial \Omega} = 0,\quad j=1,...,N,\label{e1.11c}\\
&(\partial_\nu\Phi^{(\varepsilon)} + \tau\Phi^{(\varepsilon)})|_{\partial \Omega}=V.\label{e1.13c}
\end{align}

In contrast, the incompressible PNPNS system is
\begin{align}
&\div u=0,\label{e1.1i}\\
&\partial_t u + (u\cdot\nabla)u+\nabla \mathfrak{p}  = \mu\Delta u +\kappa\Delta\Phi\nabla\Phi,\label{e1.2i}\\
&\partial_t c_{j}+\div(c_ju)=\div\left( D_{j}\nabla c_j +D_{j}z_jc_j\nabla\Phi \right),\quad j=1,...,N\label{e1.3i}\\
&-\kappa \Delta\Phi=\sum_{j=1}^Nz_jc_j,\label{e1.5i}
\end{align}  
and we consider the following initial and boundary conditions
\begin{align}
&(u,c_j)(0,\cdot)=(u_0,c_{j}^0)(\cdot),\label{e1.9i}\\
&u|_{\partial \Omega}=0,\label{e1.10i}\\
&D_{j}\left(\partial_\nu c_j + z_jc_j\partial_\nu\Phi\right)|_{\partial \Omega} = 0,\quad j=1,...,N,\label{e1.11i}\\
&(\partial_\nu\Phi + \tau\Phi)|_{\partial \Omega}=V.\label{e1.13i}
\end{align}

Owning to the known results on the incompressible limit of the Navier-Stokes equations, it is reasonable to expect that as $\varepsilon\to 0$, the global weak solutions of \eqref{e1.1c}-\eqref{e1.13c} converge to the weak solutions of \eqref{e1.1i}-\eqref{e1.13i}, as $\varepsilon\to 0$, where the pressure $\mathfrak{p}$ is a ``limit" of 
\[
\frac{a}{\varepsilon^2}(\rho_\varepsilon^\gamma - 1) + \sum_{j=1}^Nc_j^{(\varepsilon)}.
\]
In the absence of the diluted ion species, the system \eqref{e1.1c}-\eqref{e1.5c} reduces to the compressible Navier-Stokes equations. The incompressible limit for the Navier-Stokes equations was proved in \cite{LiM1} for the equations posed on the whole space and on a periodic domain, and in \cite{DGLiM} on bounded domains with the no-slip boundary condition (see also \cite{LiM2,DG}). The case of a bounded domain is more difficult due to the subtle interactions between dissipative effects and wave propagation near the boundary. The method in \cite{DGLiM} to prove the weak convergence of the velocity relies on a spectral analysis of the semigroup generated by the dissipative wave operator together with Duhamel's principle, and the convergence is strong if the domain satisfies a certain geometrical condition.  Namely, consider the following overdetermined problem:
\begin{equation}\label{e1.32}
-\Delta \phi=\lambda \phi,\text{ in $\Omega$,}\qquad \partial_\nu\phi=0,\text{ on $\partial\Omega$,}\qquad\text{and $\phi$ is constant on $\partial\Omega$.}
\end{equation}
A solution to \eqref{e1.32} is said to be trivial if $\lambda=0$ and $\phi$ is a constant; and the domain $\Omega$ is said to satisfy assumption (H) if all solutions of \eqref{e1.32} are trivial (cf. \cite{DGLiM}). 

Here, we discuss the analogous result regarding the incompressible limit of the PNPNS equations. Let us recall that the existence of global weak solutions in the incompressible case, i.e. of the initial-boundary value problem \eqref{e1.1i}-\eqref{e1.13i}, has been proved in \cite{FS}.

Let $(\rho_\varepsilon, u_\varepsilon,c_j^{(\varepsilon)},\Phi^{(\varepsilon)})$ be the weak solution of \eqref{e1.1c}-\eqref{e1.13c} provided by Theorem~\ref{T1.1}. Then, in particular, we have that
\begin{multline}\label{e1.energyc}
E_\varepsilon(t) + \int_0^t\int_\Omega \Big(\mu|\nabla u_\varepsilon|^2 + (\lambda+\mu)(\div u_\varepsilon)^2 + \sum_{j=1}^ND_{j}c_j^{(\varepsilon)}\Big|\frac{\nabla c_j^{(\varepsilon)}}{c_j^{(\varepsilon)}}+ z_j\nabla\Phi^{(\varepsilon)}\Big|^2\Big)dx\, ds
\le E_\varepsilon(0),
\end{multline}
where
\begin{align*}
E_\varepsilon(t):=&\int_\Omega\Bigg( \rho_\varepsilon\left(\frac{1}{2} |u_\varepsilon|^2 + \frac{a}{\varepsilon^2(\gamma-1)}\rho_\varepsilon^{\gamma-1} \right)  
+ \sum_{j=1}^N(c_j^{(\varepsilon)}\ln c_j^{(\varepsilon)} - c_j^{(\varepsilon)} + 1) + \frac{\kappa}{2}|\nabla\Phi^{(\varepsilon)}|^2\Bigg) dx \\
&+\frac{\kappa\tau}{2}\int_{\partial\Omega}|\Phi^{(\varepsilon)}|^2dS.
\end{align*}
Due to mass conservation, the expression for the energy $E_\varepsilon$ in both sides of \eqref{e1.energyc} may be replaced by
\begin{multline*}
\tilde{E}_\varepsilon(t):=\int_\Omega\Bigg( \frac{1}{2} \rho_\varepsilon|u_\varepsilon|^2 + \frac{a}{\varepsilon^2(\gamma-1)}\Big(\rho_\varepsilon^{\gamma}-\gamma\rho_\varepsilon+(\gamma-1)  \Big)  \\
+ \sum_{j=1}^N(c_j^{(\varepsilon)}\ln c_j^{(\varepsilon)} - c_j^{(\varepsilon)} + 1) + \frac{\kappa}{2}|\nabla\Phi^{(\varepsilon)}|^2\Bigg) dx +\frac{\kappa\tau}{2}\int_{\partial\Omega}|\Phi^{(\varepsilon)}|^2dS.
\end{multline*}

Let us assume that the initial data $(\rho_{0,\varepsilon},m_{0,\varepsilon},c_{j}^{0,\varepsilon})$ satisfy \eqref{e1.initial}. Moreover, we assume that
\begin{equation}\label{e1.42}
\begin{cases}
\rho_{0,\varepsilon}^{-1/2}m_{0,\varepsilon}\rightharpoonup u_0 \text{ weakly in $L^2(\Omega)$,}\\[3mm]
c_j^{0,\varepsilon}\to c_j^0 \text{ in $L^1(\Omega)$,}
\end{cases}
\end{equation}
for some $u_0\in L^2(\Omega)$ and $c_j^0\in L^1(\Omega)$ with $c_j^0\ln c_j^0\in L^1(\Omega)$. Moreover, we assume that
\begin{multline}\label{e1.43}
\int_\Omega\Bigg( \frac{1}{2} \rho_{0,\varepsilon}^{-1/2}|m_{0\varepsilon}|^2 + \frac{a}{\varepsilon^2(\gamma-1)}\Big(\rho_{0,\varepsilon}^{\gamma}-\gamma\rho_{0,\varepsilon}+(\gamma-1)  \Big)  \\
+ \sum_{j=1}^N(c_j^{0,\varepsilon}\ln c_j^{0,\varepsilon} - c_j^{0,\varepsilon} + 1) + \frac{\kappa}{2}|\nabla\Phi^{0,\varepsilon}|^2\Bigg) dx +\frac{\kappa\tau}{2}\int_{\partial\Omega}|\Phi^{0,\varepsilon}|^2dS\le C,
\end{multline}
for some positive constant $C$, independent of $\varepsilon$. Assumption \eqref{e1.43} implies, in particular, that the initial density is of order $1+O(\varepsilon)$ since
\[
\rho_{0,\varepsilon}^{\gamma}-\gamma\rho_{0,\varepsilon}+(\gamma-1) = \rho_{0,\varepsilon}^\gamma - 1 - \gamma(\rho_0-1).
\]

Let us finally recall the definition of Leray's projectors $P$ and $Q$ defined in $L^2(\Omega)^3$ onto the space of divergence-free vector fields and onto the space of gradients, respectively, defined by
\[
P=I-Q, \quad Q=\nabla \Delta_N^{-1}\div,
\]
where $\Delta_N$ denotes the Laplace operator with Neumann boundary conditions, meaning that $f=\Delta_N^{-1}g$ if $\Delta f =g$ in $\Omega$ with $\partial_\nu f = 0$ and $\int_\Omega f dx =0$. In particular, for $f\in L^2(\Omega)^3$,
\[
f=Pf+Qf,\quad \text{with}\quad \div(Pf)=0,\quad \curl(Qf)=0.
\]

With these notations, we may state the following result.

\begin{theorem}\label{T1.3}
Let $(\rho_\varepsilon, u_\varepsilon,c_j^{(\varepsilon)},\Phi^{(\varepsilon)})$ be a sequence of finite energy weak solutions of the compressible PNPNS equations \eqref{e1.1c}-\eqref{e1.5c} subject to the initial and boundary conditions \eqref{e1.9c}-\eqref{e1.13c} in a smooth bounded domain $\Omega$ of $\mathbb{R}^3$, where the initial conditions satisfy \eqref{e1.42} and \eqref{e1.43}. Then, up to a subsequence, as $\varepsilon\to 0$, $(\rho_\varepsilon, u_\varepsilon,c_j^{(\varepsilon)},\Phi^{(\varepsilon)})$ converges to a weak solution $(u,\mathfrak{p},c_j,\Phi)$ of the incompressible PNPNS equations \eqref{e1.1i}-\eqref{e1.5i} with \eqref{e1.10i}-\eqref{e1.13i} and with initial data $u|_{t=0}=Pu_0$ and $c_j|_{t=0}=c_j^{0}$. More precisely,
\begin{align*}
&\rho_\varepsilon\to 1 \text{ in $C(0,T;L^\gamma(\Omega))$,}\\
&u_\varepsilon\to u \text{ weakly in $L^2((0,T)\times\Omega)$ and strongly if $\Omega$ satisfies condition (H),}\\
&c_{j}^{(\varepsilon)}\to c_j \text{ strongly in $L^1(0,T;L^p(\Omega))$ for $1\le p<3$,}\\
&\nabla c_j^{(\varepsilon)}\rightharpoonup \nabla c_j \text{ weakly in $L^2(0,T;L^1(\Omega))\cap L^1(0,T;L^q(\Omega))$, for $1\le q<3/2$,}\\
&\nabla\Phi^{(\varepsilon)}\rightharpoonup \nabla\Phi \text{ weakly-* in $L^\infty(0,T;L^2(\Omega))$,}\\
&\Phi^{(\varepsilon)}\to \Phi \text{ strongly in $C([0,T];L^p(\Omega))$, for $1\le p< 6$.}
\end{align*}

Moreover, there are $r_1, r_2>1$ such that 
\begin{align*}
&c_j^{(\varepsilon)}\nabla\Phi^{(\varepsilon)}\rightharpoonup c_j\nabla\Phi \text{ weakly in $L^{r_1}((0,T)\times\Omega)$, and}\\
&c_j^{(\varepsilon)}u_\varepsilon\rightharpoonup c_j u \text{ weakly in $L^{r_2}((0,T)\times\Omega)$.}
\end{align*}
\end{theorem}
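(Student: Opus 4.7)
\textbf{Proof plan for Theorem~\ref{T1.3}.}

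First, I would extract the natural \emph{a priori} bounds from the uniform energy estimate \eqref{e1.energyc} together with \eqref{e1.43}: $u_\varepsilon$ is bounded in $L^2(0,T;H_0^1(\Omega))$; $\sqrt{\rho_\varepsilon}\,u_\varepsilon$ in $L^\infty(0,T;L^2(\Omega))$; $\sqrt{c_j^{(\varepsilon)}}$ in $L^2(0,T;H^1(\Omega))$; $\Phi^{(\varepsilon)}$ in $L^\infty(0,T;H^1(\Omega))$; and $\sqrt{c_j^{(\varepsilon)}}\,\nabla\Phi^{(\varepsilon)}$ in $L^2((0,T)\times\Omega)$. The internal-energy contribution
\[
\frac{a}{\varepsilon^2(\gamma-1)}\int_\Omega \bigl(\rho_\varepsilon^\gamma-\gamma\rho_\varepsilon+(\gamma-1)\bigr)\,dx \le C
\]
forces $\rho_\varepsilon=1+\varepsilon\phi_\varepsilon$ with $\phi_\varepsilon$ bounded uniformly in $L^\infty(0,T;L^\gamma(\Omega))$ when $\gamma\ge 2$, and in an appropriate Orlicz space for $\tfrac{3}{2}<\gamma<2$; in either case this yields $\rho_\varepsilon\to 1$ strongly in $L^\infty(0,T;L^\gamma(\Omega))$, and continuity in time follows from the continuity equation exactly as in the standard Lions--Feireisl theory.

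Since the hypotheses \eqref{e1.est} are precisely the bounds just listed, and $u_\varepsilon\rightharpoonup u$ weakly in $L^2(0,T;H_0^1(\Omega))$ along a subsequence, Theorem~\ref{T1.2} applies \emph{verbatim} to $(c^{(\varepsilon)},\Phi^{(\varepsilon)})$. This already delivers all the convergence assertions of Theorem~\ref{T1.3} for the ion concentrations, the potential, and the two key products $c_j^{(\varepsilon)}\nabla\Phi^{(\varepsilon)}\rightharpoonup c_j\nabla\Phi$ and $c_j^{(\varepsilon)}u_\varepsilon\rightharpoonup c_j u$, and it shows that the limit $(u,c_1,\dots,c_N,\Phi)$ satisfies the Nernst--Planck--Poisson subsystem \eqref{e1.3i}--\eqref{e1.5i} with the boundary conditions \eqref{e1.11i}--\eqref{e1.13i}.

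The substantive work lies in passing to the limit in the momentum equation \eqref{e1.2c}. I would first rewrite the Lorentz force using \eqref{e1.5c} as $\kappa\Delta\Phi^{(\varepsilon)}\nabla\Phi^{(\varepsilon)}=-\sum_j z_j c_j^{(\varepsilon)}\nabla\Phi^{(\varepsilon)}$, so that Theorem~\ref{T1.2} gives its weak convergence to $-\sum_j z_j c_j\nabla\Phi=\kappa\Delta\Phi\nabla\Phi$, and symmetrize via $\kappa\Delta\Phi\nabla\Phi=\kappa\div(\nabla\Phi\otimes\nabla\Phi)-\tfrac{\kappa}{2}\nabla|\nabla\Phi|^2$ to isolate its divergence-free part. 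Then apply Leray's projector $P$ to \eqref{e1.2c}: the singular pressure $\varepsilon^{-2}\nabla(a\rho_\varepsilon^\gamma)$, the diffusive forcing $\sum\nabla c_j^{(\varepsilon)}$, and the $\nabla|\nabla\Phi^{(\varepsilon)}|^2$ contribution are all annihilated, leaving
\[
\partial_t P(\rho_\varepsilon u_\varepsilon)+P\div(\rho_\varepsilon u_\varepsilon\otimes u_\varepsilon)=P\div\mathbb{S}_\varepsilon+P\bigl(\kappa\div(\nabla\Phi^{(\varepsilon)}\otimes\nabla\Phi^{(\varepsilon)})\bigr).
\]
The right-hand side is bounded in $L^2(0,T;H^{-1}(\Omega))+L^1(0,T;W^{-1,r}(\Omega))$ for some $r>1$, so Aubin--Lions provides strong compactness of $P(\rho_\varepsilon u_\varepsilon)$ in a space like $L^2(0,T;W^{-s,2}(\Omega))$; combined with $\rho_\varepsilon\to 1$ in $C([0,T];L^\gamma(\Omega))$ and the $L^2(H^1_0)$ bound, this upgrades to strong convergence of $Pu_\varepsilon$ in $L^2(0,T;L^q(\Omega))$ for some $q\in(2,6)$.

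The main obstacle, exactly as in the compressible--incompressible limit for Navier--Stokes \cite{LiM1,DGLiM}, is the gradient part $Qu_\varepsilon$: the pair $\bigl(\varepsilon^{-1}(\rho_\varepsilon-1),\,Qu_\varepsilon\bigr)$ obeys a linear acoustic wave system, now driven by the additional $O(1)$ source terms coming from $\nabla c_j^{(\varepsilon)}$ and $c_j^{(\varepsilon)}\nabla\Phi^{(\varepsilon)}$, which are uniformly bounded in $L^1(0,T;W^{-1,r}(\Omega))$ by Theorem~\ref{T1.2}. For these source terms one must verify that the spectral analysis of the damped acoustic semigroup from \cite{DGLiM} goes through: under condition (H), the eigenfunction decomposition yields $Qu_\varepsilon\to 0$ strongly in $L^2((0,T)\times\Omega)$ via Duhamel's formula, while without (H) one obtains only weak convergence to zero and must rely on the fact that the oscillating gradient part does not contribute to $\rho_\varepsilon u_\varepsilon\otimes u_\varepsilon$ when tested against divergence-free vector fields, by the orthogonality $\int_\Omega (Qu_\varepsilon\otimes Qu_\varepsilon):\nabla\psi\,dx=0$ for $\div\psi=0$ modulo terms that vanish in the limit. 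With this in hand, the convective term passes to $u\otimes u$ in the sense of distributions, the viscous term converges by weak lower semicontinuity, and identifying $\mathfrak{p}$ as the distributional gradient that absorbs the singular pressure, the diffusion flux, and the $|\nabla\Phi|^2/2$ piece of the Lorentz force completes the identification of the limit as a weak solution of \eqref{e1.1i}--\eqref{e1.13i}; the initial condition $u|_{t=0}=Pu_0$ arises because the gradient component of the initial momentum is absorbed into the acoustic oscillations.
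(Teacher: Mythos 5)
Your proposal follows essentially the same route as the paper: uniform bounds from \eqref{e1.energyc} and \eqref{e1.43}; $\rho_\varepsilon\to 1$ strongly; Theorem~\ref{T1.2} applied verbatim to handle the entire PNP subsystem, including the two forcing terms in the momentum equation; then the Lions--Masmoudi/Desjardins--Grenier--Lions--Masmoudi strategy for $Pu_\varepsilon$ (strong) and $Qu_\varepsilon$ (spectral decomposition of the damped acoustic operator, split $Q=Q_1+Q_2$ by sign of $\mathrm{Re}(i\lambda_{k,1}^\pm)$). The paper's observation that the extra forcing $\sum_j z_j c_j^{(\varepsilon)}\nabla\Phi^{(\varepsilon)}$ and $\sum_j c_j^{(\varepsilon)}$ enter the acoustic source $g_\varepsilon$ and are controlled by Theorem~\ref{T1.2} is exactly what you identify. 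Your use of Aubin--Lions for $P(\rho_\varepsilon u_\varepsilon)$ replaces the paper's appeal to Lemma~5.1 of \cite{L2}, but both are standard and interchangeable.

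One imprecision worth flagging: for the non-(H) case, you write that the oscillating gradient part ``does not contribute \dots by the orthogonality $\int_\Omega(Qu_\varepsilon\otimes Qu_\varepsilon):\nabla\psi\,dx=0$ for $\div\psi=0$.'' This identity is false as a pointwise-in-time statement. What is actually proved in \cite{DGLiM,G,LiM1} (and cited in the paper) is that $\div\bigl(\rho_\varepsilon\,Q_2(u_\varepsilon)\otimes Q_2(u_\varepsilon)\bigr)$ converges weakly to a \emph{gradient}: the cross-terms oscillate with phases $e^{\pm i(\lambda_{k,0}\pm\lambda_{l,0})t/\varepsilon}$, time-averaging kills the non-resonant ones, and the algebraic structure of the resonant modes forces the surviving limit to be $\nabla q$ for some $q$, which then vanishes when tested against divergence-free fields. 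This resonance-cancellation argument is the hard part of the acoustic analysis; phrasing it as ``orthogonality modulo vanishing terms'' undersells it, though since you cite \cite{DGLiM} the intent is clear and the overall strategy is sound.
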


Let us point out that the energy inequality \eqref{e1.energyc} and assumption \eqref{e1.43} imply that, up to a subsequence, $u_\varepsilon$ converges weakly in $L^2(0,T;H^1(\Omega))$ to some limit $u$. Then, Theorem~\ref{T1.2} readily implies the convergence of $(c_j^{(\varepsilon)},\Phi^{(\varepsilon)})$ in the sense stated in Theorem~\ref{T1.3} and that the limit functions are a solution to the initial-boundary value problem for the Poisson-Nernst-Planck subsystem \eqref{e1.3i}, \eqref{e1.5i}, with \eqref{e1.11i}, \eqref{e1.13i}. Moreover, the forcing terms in the momentum equation related to the evolution of the ion densities are suitably bounded and converge nicely, which means that the proof of Theorem~\ref{T1.3} reduces to studying the incompressible limit of the Navier-Stokes equations with a well behaved forcing term, and the proof in \cite{DGLiM} may be adapted accordingly.

For completeness and for the convenience of the reader we include below a sketch of the proof of Theorem~\ref{T1.3}, following \cite{DGLiM} and focusing on the relevant modifications that have to be made in our context.

\begin{proof} Let $(\rho_\varepsilon, u_\varepsilon,c_j^{(\varepsilon)},\Phi^{(\varepsilon)})$ be a finite energy weak solution of \eqref{e1.1c}-\eqref{e1.13c}. By \eqref{e1.energyc} and \eqref{e1.43} we have, in particular, that
\begin{align}
&\sqrt{\rho_\ve}u_\ve \text{ is bounded in $L^\infty(0,T;L^2(\Omega))$,}\\
&\frac{a}{\ve^2(\gamma-1)}(\rho_\ve^\gamma - \gamma\rho_\ve+\gamma-1)\text{ is bounded in $L^\infty(0,T;L^1(\Omega))$,}\label{e5.2}\\
&\nabla u_\ve \text{ is bounded in $L^2(0,T;L^2(\Omega))$,}\label{e5.3}\\
&\sqrt{c_j^{(\varepsilon)}}\text{ is bounded in $L^2(0,T;H^1(\Omega))$, $j=1,...,N,$}\\
&\Phi^{(\ve)} \text{ is bounded in $L^\infty(0,T;H^1(\Omega))$,}\\
&\sqrt{c_j^{(\ve)}}\nabla\Phi^{(\ve)} \text{ is bounded in $L^2((0,T)\times\Omega)$, $j=1,...,N$.}
\end{align}
Reasoning as in \cite{LiM1}, it follows from \eqref{e5.2} that 
%
\begin{equation}\label{e5.12}
\rho_\ve\to 1\text{ in $C([0,T];L^\gamma(\Omega))$,}\quad \text{as $\ve\to 0$,}
\end{equation}
and also, using the continuity equation \eqref{e1.1c}, that
\[
\div u_\ve \rightharpoonup 0\text{ weakly in $L^2(0,T;L^2(\Omega))$,}
\]
for all $T>0$.

Now, by virtue of \eqref{e5.3}, we may assume that, up to a subsequence, $u_\ve\rightharpoonup u$ for some $u\in L^2(0,T;H_0^1(\Omega))$. Then, by Theorem~\ref{T1.2} we have that there are $c_j$, $j=1,...,N$ and $\Phi$ such that,
\begin{align*}
&c_{j}^{(\varepsilon)}\to c_j \text{ strongly in $L^1(0,T;L^p(\Omega))$ for $1\le p<3$,}\\
&\nabla c_j^{(\varepsilon)}\rightharpoonup \nabla c_j \text{ weakly in $L^2(0,T;L^1(\Omega))\cap L^1(0,T;L^q(\Omega))$, for $1\le q<3/2$,}\\
&\nabla\Phi^{(\varepsilon)}\rightharpoonup \nabla\Phi \text{ weakly-* in $L^\infty(0,T;L^2(\Omega))$,}\\
&\Phi^{(\varepsilon)}\to \Phi \text{ strongly in $C([0,T];L^p(\Omega))$, for $1\le p< 6$.}
\end{align*}
Moreover, there are $r_1, r_2>1$ such that,
\begin{equation}\label{e5.13}
\begin{split}
&c_j^{(\varepsilon)}\nabla\Phi^{(\varepsilon)}\rightharpoonup c_j\nabla\Phi \text{ weakly in $L^{r_1}((0,T)\times\Omega)$, and}\\
&c_j^{(\varepsilon)}u_\varepsilon\rightharpoonup c_j u \text{ weakly in $L^{r_2}((0,T)\times\Omega)$,}
\end{split}
\end{equation}
and the limit functions $u$, $(c_1,...,c_N,\Phi)$ are a weak solution of \eqref{e1.3i}-\eqref{e1.5i} and \eqref{e1.11i}-\eqref{e1.13i}.

Next, we consider the projection of the continuity equation \eqref{e1.2c} onto the space of divergence-free vector fields:
\begin{equation}
\partial_t P(\rho_\varepsilon u_\varepsilon) + P(\div (\rho_\varepsilon u_\varepsilon\otimes u_\varepsilon))  = \mu_\ve\Delta Pu_\ve- P\left(\Big(\sum_{j=1}^Nz_j c_j^{(\ve)}\Big)\nabla\Phi^{(\varepsilon)}\right),
\end{equation}
which implies that $\partial_t P(\rho_\ve u_\ve)$ is bounded in $L^1(0,T;H^{-1}(\Omega))+L^2(0,T;H^{-1}(\Omega))+L^{r_1}((0,T)\times(\Omega))$. Then, as in \cite{LiM1} we may apply Lemma~5.1 in \cite{L2} in order to conclude that $P(\rho_\ve u_\ve)\cdot P(u_\ve)$ converges to $|u|^2$ in the sense of distributions. Then, we conclude that $P (u_\ve)$ converges in $L^2(0,T;L^2(\Omega))$ to $u=P (u)$, since $P(u_\ve)\rightharpoonup u$ weakly in $L^2(0,T;L^2(\Omega))$ and 
\[
\left|\int_0^T\int_\Omega (|Pu_\ve|^2 - P(\rho_\ve u_\ve)\cdot P(u_\ve))dx dt\right| \le C\|\rho_\ve - 1\|_{C([0,T];L^\gamma(\Omega))}\|u_\ve\|_{L^2(0,T;L^{2\gamma/(\gamma-1)}(\Omega))}.
\]

At this point, we have to show that the limit functions solve the incompressible PNPNS system \eqref{e1.1i}-\eqref{e1.5i} with \eqref{e1.10i}-\eqref{e1.13i}; particularly, that the momentum equation \eqref{e1.2i} is satisfied. As in \cite{DGLiM}, the main difficulty lies in the convergence of the term $\div(\rho_\ve u_\ve\otimes u_\ve)$, which amounts to analyzing the convergence of the gradient part of the velocity $Q(u_\ve)$. Indeed, we already have strong convergence of $P(u_\ve)$ to $u$, strong convergence of $\rho_\ve$ to $1$ and also the convergence of the forcing terms related to the evolution of the ion densities, cf. \eqref{e5.13}. Then, in order to conclude, it is enough to show that $\div (Q(\rho_\ve u_\ve)\otimes Q(u_\ve))$ converges weakly to a gradient, which can then be incorporated into the pressure. 

Here, the argument consists of splitting $Q(u_\ve)=Q_1(u_\ve)+Q_2(u_\ve)$, where
\[
Q_1(u_\ve)=\sum_{k\in I}\left\langle Q(u_\ve), \frac{\nabla\Psi_{k,0}}{\lambda_{k,0}}\right\rangle \frac{\nabla\Psi_{k,0}}{\lambda_{k,0}}, \quad Q_2(u_\ve)=\sum_{k\in J}\left\langle Q(u_\ve), \frac{\nabla\Psi_{k,0}}{\lambda_{k,0}}\right\rangle \frac{\nabla\Psi_{k,0}}{\lambda_{k,0}},
\]
for certain sets of indexes $I$ and $J$, such that $I\cup J=\mathbb{N}$, where $(\lambda_{k,0}^2)_{k\in\mathbb{N}}$ ($\lambda_{k,0}>0$) are the non-decreasing sequence of eigenvalues and $(\Psi_{k,0})_{k\in\mathbb{N}}$ the corresponding eigenvectors in $L^2(\Omega)$ of the Laplace operator with homogeneous Neumann boundary condition, $-\Delta_N$, so that
\[
-\Delta \Psi_{k,0}=\lambda_{k,0}^2\Psi_{k,0} \text{ in $\Omega$,}\quad \partial_\nu \Psi_{k,0}=0\text{ on $\partial\Omega$}.
\] 
In order to define the sets $I$ and $J$, we consider the wave operators $A_0$ and $A_\ve$ in $\mathcal{D}'(\Omega)\times \mathcal{D}'(\Omega)^3$ given by
\[
A_0\begin{pmatrix} \Psi\\ m \end{pmatrix} = \begin{pmatrix}\div m\\ \nabla \psi \end{pmatrix},
\]
and
\[
A_\ve\begin{pmatrix} \Psi\\ m \end{pmatrix} = \begin{pmatrix} \div m\\ \nabla \psi \end{pmatrix} + \ve \begin{pmatrix} 0\\ \mu \Delta m + (\lambda + \mu)\nabla\div m\end{pmatrix}.
\]
The eigenvalues $A_0$ are $(\pm i\lambda_{k,0})_{k\ge 1}$ and the corresponding eigenvectors are
\[
\phi_{k,0}^{\pm}=\begin{pmatrix}\Psi_{k,0}\\ m_{k,0}^\pm=\pm\frac{\nabla\Psi_{k,0} }{i\lambda_{k,0}} \end{pmatrix},
\]
i.e.
\[
A_0 \phi_{k,0}^{\pm}=\pm i \lambda_{k,0}\phi_{k,0}^\pm\text{ in $\Omega$,}\quad m_{k,0}^\pm\cdot \nu = 0 \text{ on $\partial\Omega$.}
\]

We also recall the following lemma from \cite{DGLiM}.

\begin{lemma}\label{l5.1}
Let $\Omega$ be a $C^2$ bounded domain of $\mathbb{R}^d$ and let $k\ge 1$ and $M\ge 0$. Then, there exist approximate eigenvalues $i\lambda_{k,\ve,M}^\pm$ and eigenvectors $\phi_{k,\ve,M}^\pm=(\Psi_{k,\ve,M}^\pm, m_{k,\ve,M}^\pm)\perp$ of $A_\ve$ such that
\begin{equation}\label{e5.170}
A_\ve \phi_{k,\ve,M}^\pm = i\lambda_{k,\ve,M}^\pm\phi_{k,\ve,M}^\pm+R_{k,\ve,M}^\pm,
\end{equation} 
with
\begin{equation}\label{e5.17}
i\lambda_{k,\ve,M}^\pm=\pm i\lambda_{k,0}+i\lambda_{k,1}^\pm\sqrt{\ve}+O(\ve),\quad \text{where $Re(i\lambda_{k,1}^\pm)\le 0$,}
\end{equation}
and for all $p\in [1,\infty]$, we have
\begin{equation}
\|R_{k,\ve,M}^\pm\|_{L^p(\Omega)}\le C_p(\sqrt{\ve})^{M+1/p}, \quad\text{and}\quad \|\phi_{k,\ve,M}^\pm-\phi_{k,0}^\pm\|_{L^p(\Omega)}\le C_p(\sqrt{\ve})^{1/p}.
\end{equation}
\end{lemma}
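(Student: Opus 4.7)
The plan is to construct $\phi_{k,\ve,M}^\pm$ and $\lambda_{k,\ve,M}^\pm$ by a matched asymptotic expansion in powers of $\sqrt{\ve}$: a regular interior series plus a boundary layer corrector supported in an $O(\sqrt{\ve})$-slab near $\partial\Omega$. The $\sqrt{\ve}$ scale is forced by the mismatch between the natural boundary condition of $A_0$ (namely $m\cdot\nu=0$, coming from $\partial_\nu \Psi_{k,0}=0$) and the no-slip condition $m|_{\partial\Omega}=0$ implicitly imposed by the viscous block $\mu\Delta m+(\lambda+\mu)\nabla\div m$ appearing in $A_\ve$. Let $(y',z)$ be local coordinates in a tubular neighborhood of $\partial\Omega$, with $z\ge 0$ the signed distance, and set $Z=z/\sqrt{\ve}$. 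The ansatz reads
\begin{equation*}
\phi_{k,\ve,M}^\pm(x)=\sum_{j=0}^{M}\ve^{j/2}\Big(\phi_{k,j}^{\pm,\mathrm{int}}(x)+\chi(z)\,\Theta_{k,j}^{\pm}(y',Z)\Big),\qquad i\lambda_{k,\ve,M}^\pm=\pm i\lambda_{k,0}+\sum_{j=1}^{M}\ve^{j/2}\,i\lambda_{k,j}^{\pm},
\end{equation*}
where $\phi_{k,0}^{\pm,\mathrm{int}}=\phi_{k,0}^\pm$, $\Theta_{k,0}^\pm\equiv 0$, $\chi$ is a smooth cutoff supported near $\partial\Omega$, and the profiles $\Theta_{k,j}^\pm$ are required to decay exponentially as $Z\to\infty$.

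Substituting the ansatz into \eqref{e5.170} and rewriting the operators of $A_\ve$ in the $(y',Z)$ variables (in which the viscous term contributes $\mu\partial_Z^2$ at leading order in the boundary layer), one matches powers of $\sqrt{\ve}$ separately in the interior and in the boundary layer. The leading-order interior equation is automatic. The order-$\sqrt{\ve}$ boundary layer equation is a linear ODE in $Z$ for the tangential velocity components of $\Theta_{k,1}^\pm$ of the schematic form
\begin{equation*}
\mu\,\partial_Z^2 \Theta_{k,1}^\pm - (\pm i\lambda_{k,0})\,\Theta_{k,1}^\pm = 0,
\end{equation*}
which, together with the matching condition that $m_{k,0}^\pm+\Theta_{k,1}^\pm(y',0)$ have vanishing tangential trace on $\partial\Omega$ and with exponential decay as $Z\to\infty$, determines $\Theta_{k,1}^\pm$ uniquely as a decaying exponential $e^{-\alpha^\pm Z}$ with $\mathrm{Re}\,\alpha^\pm>0$. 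At order $\sqrt{\ve}$ in the interior one obtains $(A_0\mp i\lambda_{k,0})\phi_{k,1}^{\pm,\mathrm{int}}=F_1\pm i\lambda_{k,1}^\pm\,\phi_{k,0}^\pm$; the Fredholm alternative (against $\phi_{k,0}^\pm$) fixes $\lambda_{k,1}^\pm$ via a surface integral over $\partial\Omega$ involving $\partial_Z\Theta_{k,1}^\pm(y',0)$ and $\overline{m_{k,0}^\pm(y')}$, whose explicit evaluation yields $\mathrm{Re}(i\lambda_{k,1}^\pm)\le 0$, encoding the dissipative damping produced by the boundary layer. Higher-order correctors are then produced by the same procedure: at each order the Fredholm condition determines $\lambda_{k,j}^\pm$, and $\Theta_{k,j}^\pm$ solves an explicit ODE with exponentially decaying source, hence itself decays exponentially together with all derivatives.

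It remains to estimate $R_{k,\ve,M}^\pm$, which has an interior part of size $O(\ve^{(M+1)/2})$ pointwise on $\Omega$ and a boundary layer part of size $O(\ve^{M/2})$ pointwise, supported in a slab of thickness $O(\sqrt{\ve})$ about $\partial\Omega$; the $L^p$ estimate then gives $\|R_{k,\ve,M}^\pm\|_{L^p(\Omega)}\le C\,\ve^{M/2}\cdot\ve^{1/(2p)}=C(\sqrt{\ve})^{M+1/p}$, as required. The estimate on $\phi_{k,\ve,M}^\pm-\phi_{k,0}^\pm$ follows identically: its dominant contribution is the $j=1$ boundary layer piece, which is $O(1)$ pointwise on a slab of width $O(\sqrt{\ve})$. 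The main obstacle is the rigorous handling of the geometry: the Jacobian of the tubular coordinates introduces variable coefficients and curvature terms in the Laplacian, and one must verify that the Fredholm compatibility condition at each order can indeed be enforced by a scalar choice of $\lambda_{k,j}^\pm$, which relies on the simplicity of $\pm i\lambda_{k,0}$ as an eigenvalue of $A_0$ (or, in degenerate cases, on a block-wise analysis within the corresponding eigenspace). Once these two points are dealt with, the induction on $M$ reduces to routine bookkeeping.
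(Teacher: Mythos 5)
Note first that the paper does not prove Lemma~\ref{l5.1}: it is recalled verbatim from \cite{DGLiM}, so there is no in-paper argument to compare against. Your strategy---a matched asymptotic expansion consisting of an interior series plus an exponentially decaying boundary-layer corrector on the $\sqrt{\varepsilon}$ scale, Fredholm solvability at each order to fix $\lambda_{k,j}^\pm$, and $L^p$ bounds governed by the $O(\sqrt{\varepsilon})$-thickness of the layer---is indeed the strategy of \cite{DGLiM}, so the overall approach is the right one.

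However, the ansatz is mis-specified, which creates an internal inconsistency. You set $\Theta_{k,0}^\pm\equiv 0$, so the first nontrivial boundary-layer term carries the prefactor $\sqrt{\varepsilon}$. But the mismatch you yourself identify---the tangential component of $m_{k,0}^\pm=\pm\nabla\Psi_{k,0}/(i\lambda_{k,0})$ is generically an $O(1)$ nonzero function on $\partial\Omega$, while the domain of $A_\varepsilon$ enforces $m|_{\partial\Omega}=0$---requires the leading boundary-layer corrector for the tangential momentum to be $O(1)$, not $O(\sqrt{\varepsilon})$. With $\Theta_{k,0}^\pm\equiv 0$, the trace of the ansatz at $\partial\Omega$ equals $m_{k,0}^\pm|_{\mathrm{tan}}+O(\sqrt{\varepsilon})$, which is $O(1)$, so the boundary residual entering $R_{k,\varepsilon,M}^\pm$ does not shrink with $\varepsilon$ and the claimed bound fails. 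Your last paragraph already contradicts the ansatz: you say the dominant contribution to $\phi_{k,\varepsilon,M}^\pm-\phi_{k,0}^\pm$ is ``$O(1)$ pointwise on a slab of width $O(\sqrt{\varepsilon})$,'' which is precisely what produces the stated $(\sqrt{\varepsilon})^{1/p}$ estimate, yet under your ansatz the $j=1$ boundary-layer piece is $\sqrt{\varepsilon}\,\Theta_{k,1}^\pm$, hence $O(\sqrt{\varepsilon})$ pointwise, giving only $(\sqrt{\varepsilon})^{1+1/p}$ and, more seriously, leaving the $O(1)$ tangential trace uncancelled. The repair is to take $\Theta_{k,0}^\pm$ nonzero: its tangential-momentum component solves exactly the ODE you wrote, $\mu\,\partial_Z^2\Theta-(\pm i\lambda_{k,0})\Theta=0$, with trace $\Theta(y',0)=-m_{k,0}^\pm(y')_{\mathrm{tan}}$ and exponential decay, while the normal-momentum and density parts of the layer first appear one half-power later. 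With that shift, the Fredholm solvability condition fixing $\lambda_{k,1}^\pm$ as a boundary integral, the sign $\mathrm{Re}(i\lambda_{k,1}^\pm)\le 0$ recording boundary-layer dissipation, and the induction on $M$ all go through as you describe, reproducing \cite{DGLiM}.
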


\begin{remark}
From the construction in \cite{DGLiM}, we have 
\[
i\lambda_{k,1}^\pm = -\frac{1\pm i}{2}\sqrt{\frac{\mu}{2\lambda_{k,0}^3}}\int_{\partial\Omega}|\Psi_{k,0}|^2 dS.
\]
In particular, if $Re(i\lambda_{k,1}^\pm)= 0$, then we actually have that i$\lambda_{k,1}^\pm=0$. In this case, $m_{k,0}$ vanishes on $\partial \Omega$.
\end{remark}

The next step is to define $I\subset \mathbb{N}$ to be the set of indices such that $Re(i\lambda_{k,1}^\pm)< 0$ and $J=\mathbb{N}\setminus I$; and the goal is to prove that $Q_1(u_\ve)\to 0$ in $L^2((0,T)\times\Omega)$ and that $\curl \div(Q_2(m_\ve)\otimes Q_2(u_\ve))$ converges to $0$ in the sense of distributions, which means that $\div(Q_2(m_\ve)\otimes Q_2(u_\ve))$ converges to a gradient in the sense of distributions. 

We denote the density fluctuation by 
\[
\Psi_\ve:=\frac{\rho_\ve -1}{\ve},
\]
 the momentum by 
\[
 m_\ve:=\rho_\ve u_\ve
 \]
and also we define $\phi_\ve = (\Psi_\ve, m_\ve)^\perp$.
As in \cite{DGLiM}, the proof is reduced to showing that $b_{k,\ve}^\pm=\langle\phi_\ve,\phi_{k,\ve,2}\rangle$ converges to $0$ in $L^2(0,T)$, for each $k\in I$ and analyzing its oscillations  when $k\in J$.

Now, denoting by $A_\ve^*$ the adjoint of $A_\ve$, it follows from the continuity and the momentum equations \eqref{e1.1c}, \eqref{e1.2c} that  $\phi_\ve$ solves the following equation:
\begin{equation}\label{e5.19}
\partial_t\phi_\ve -\frac{A_\ve^*\phi_\ve}{\ve}=\begin{pmatrix} 0\\ g_\ve \end{pmatrix}
\end{equation}
where
\begin{equation}
g_\ve=-\ve\mu\Delta(u_\ve\Psi_\ve)-\ve(\lambda+\mu)\nabla\div(u_\ve\Psi_\ve)-\div(m_\ve\otimes u_\ve)-\pi_\ve-\sum_{j=1}^Nz_jc_j^{(\ve)}\nabla\Phi^{(\ve)},
\end{equation}
and
\[
\pi_\ve :=  \frac{a\Big(\rho_\ve^\gamma - 1 -\gamma(\rho_\ve -1)\Big)}{\ve^2} + \sum_{j=1}^N c_j^{(\ve)}.
\]
Note that $\pi_\ve$ is bounded in $L^\infty(0,T;L^1(\Omega))$.
Then, we take the scalar product of equation \eqref{e5.19} with $\phi_{k,\ve,2}^\pm$ in order to obtain
\begin{equation}\label{e5.21}
\frac{d}{dt}b_{k,\ve}^\pm(t) -\overline{\frac{i\lambda_{k,\ve,2}^\pm}{\ve}}b_{k,\ve}^\pm(t)=d_{k,\ve}^\pm(t),
\end{equation}
where $d_{k,\ve}^\pm(t)=\langle g_\ve, m_{k,\ve,2}^\pm\rangle + \ve^{-1}\langle\phi_\ve,R_{k,\ve,2}^\pm\rangle$.
We conclude, by Duhamel's formula that
\begin{equation}
b_{k,\ve}^\pm=b_{k,\ve}^\pm(0)e^{\overline{i\lambda_{k,\ve,2}^\pm}t/\ve}+\int_0^td_{k,\ve}^\pm(s)e^{\overline{i\lambda_{k,\ve,2}^\pm}(t-s)/\ve}ds.
\end{equation}

Here, the argument is split in two cases.\\

\noindent{\em\bf The case $k\in I$:} 

In this case, $b_{k,\ve}^\pm$ may be estimated using relation \eqref{e5.17} with $M=2$. First, we see that
\[
\|e^{\overline{i\lambda_{k,\ve,2}^\pm}t/\ve}b_{k,\ve}^\pm(0)\|_{L^2(0,T)}\le C\|e^{Re(\overline{i\lambda_{k,1}^\pm})t/\sqrt{\ve}}b_{k,\ve}^\pm(0)\|_{L^2(0,T)}\le C\ve^{1/4}.
\]

Next, we see that \eqref{e5.17} implies, in particular, that for any $a\in L^q(0,T)$ and $1\le p,q\le\infty$ with $p^{-1}+q^{-1}=1$ we have
\begin{equation}\label{e5.24}
\left|\int_0^t e^{\overline{i\lambda_{k,\ve,2}^\pm}(t-s)/\ve} a(t) ds\right|\le \int_0^te^{Re(\overline{i\lambda_{k,1}^\pm})(t-s)/\ve}|a(s)|ds\le C\|a\|_{L^q(0,T)}(\sqrt{\ve})^{1/p}.
\end{equation}
Then, we write $|d_{k,\ve}^\pm(t)|\le d_1(t)+d_2(t)+d_3(t)+d_4(t)+d_5(t)$, where
\begin{align*}
&d_1=\left|\int_\Omega(m_\ve\otimes u_\ve)(t)\cdot\nabla m_{k,\ve,2}^\pm dx\right|,\\
&d_2=\left|\int_\Omega\pi_\ve(t)\div m_{k,\ve,2}^\pm dx\right|,\\
&d_3=\ve\left|\int_\Omega (\mu\Delta + (\lambda+\mu)\nabla\div)m_{k,\ve,2}^\pm\cdot (u_\ve\Psi_\ve)(t) dx \right|,\\
&d_4=\ve^{-1}|\langle \phi_\ve,R_{k,\ve,2}\rangle|,\\
&d_5=\sum_{j=1}^N\left|\int_\Omega z_j(c_j^{(\ve)}\nabla\Phi^{(\ve)})(t)\cdot m_{k,\ve,2}^\pm dx\right|.
\end{align*}
Exactly as in \cite{DGLiM} we have the following bounds
\begin{align*}
&d_1(t)\le C\|u_\ve^1\|_{L^\infty(0,T;L^2(\Omega))}\|\nabla u_\ve\|_{L^2(\Omega)} + C\ve^{1/2}\|\nabla u\|_{L^2(\Omega)}^2+C\ve^{1/2}\|\nabla u_\ve\|_{L^2(\Omega)},\\
&d_2(t)\le C\|\pi_\ve\|_{L^\infty(0,T;L^1(\Omega))}(\|\Psi_{k,\ve,2}^\pm\|_{L^\infty(\Omega)}+\|R_{k,\ve,2}^\pm\|_{L^\infty(\Omega)})\le C,\\
&d_3(t)\le C\|\nabla u\|_{L^2(\Omega)},\\
&d_4(t)\le  C\ve^{1/2 - 1/2\vartheta},
\end{align*}
where $\vartheta=\min\{2,\gamma\}$.

Finally, recalling that $c_j^{(\ve)}\nabla\Phi^{(\ve)}$ is bounded in $L^{r_1}((0,T)\times\Omega)$ (cf. \eqref{e5.13}), we estimate
\[
d_5(t)\le \|\nabla m_{k,\ve,2}^\pm\|_{L^\infty(\Omega)}\|c_j^{(\ve)}\nabla\Phi^{(\ve)}\|_{L^{r_2}(\Omega)}.
\]
As a consequence, we may apply \eqref{e5.24} repeatedly in order to conclude that $b_{k,\ve}^\pm$ converges strongly to $0$ in $L^2(0,T)$. \\

\noindent{\em\bf The case $k\in J$:} 

In this case, we have that $\lambda_{k,1}^\pm=0$. From \eqref{e5.21}, we have that $e^{\pm i\lambda_{k,0}t/\ve}b_{k,\ve}^\pm$ is bounded in $L^2(0,T)$ and that $\frac{d}{dt}(e^{\pm i\lambda_{k,0}t/\ve}b_{k,\ve}^\pm)$ is bounded in $\sqrt{\ve}L^1(0,T)+L^p(0,T)$ for some $p>1$. We conclude that, up to a subsequence, $e^{\pm i\lambda_{k,0}t/\ve}b_{k,\ve}^\pm$ converges strongly in $L^2(0,T)$ to some $b_{osc}^\pm$.

At this point, we see that the particular structure of the forcing terms due to the interactions with the ion species does not play any additional role in the weak convergence to a gradient of the interaction terms $$\div\left( \sum_{k,j\in J,\, j,k\le K}\rho_\ve b_{k,\ve}^\pm b_{l,\ve}^\pm \frac{\nabla\Psi_{k,0}}{\lambda_{k,0}}\otimes \frac{\nabla\Psi_{l,0}}{\lambda_{l,0}}\right)$$ for any given $K>0$, and that the arguments of \cite{DGLiM,G,LiM1} (cf. \cite{HW2,WY}) may be carried out line by line in order to conclude that $\div(\rho_\ve Q_2(u_\ve)\otimes Q_2(u_\ve))$ converges to a gradient in the sense of distributions, which completes the proof of Theorem~\ref{T1.3}.
\end{proof}



\appendix

\section{Energy Estimates for Other Bboundary Conditions}\label{A1}

As mentioned in Section~\ref{S:2}, other boundary conditions for the model might be shown to provide good estimates, starting from the energy equation \eqref{e.energydif}. For instance, if instead of \eqref{e1.13} one considers the following Dirichlet boundary condition on the potential (which has been considered in the incompressible case in e.g. \cite{CI1}),
\begin{equation}\tag{\ref{e1.13}*}\label{e1.13**}
\Phi|_{\partial\Omega}=V,
\end{equation}
then, it is still possible to recover an integral energy inequality. Indeed, in this case the only difference is the treatment of the term $\div(\kappa\Phi\nabla\Phi_{t})$, which can be handled as follows. We write $\Phi=\Phi_0+\Phi_1$ where $\Phi_0$ is the solution of the stationary problem 
\begin{align}
&-\kappa\Delta \Phi_0 = 0,\label{e.phi0}\\
&\Phi_0|_{\partial \Omega}=V,\label{e.phi0data}
\end{align}
and $\Phi_1$ is the solution of the problem 
\begin{align}
&-\kappa\Delta\Phi_1 = \sum_{j=1}^Nz_jc_j,\\
&\Phi_1|_{\partial \Omega}=0.\label{e.Phi1b}
\end{align} 
In light of this decomposition, using \eqref{e.Phi1b}, noting that $\Phi_0$ does not depend on $t$ and also recalling \eqref{e1.5}, we see that
\begin{align*}
\int_\Omega \div(\kappa\Phi\nabla\Phi_t)dx &=  \int_\Omega \div(\kappa\Phi_0\nabla\Phi_t)dx\\
    &=\frac{d}{dt} \int_\Omega \left(\kappa\nabla\Phi_0\cdot\nabla\Phi + \Phi_0\sum_{j=1}^Nz_jc_j \right)dx.
\end{align*}

Thus, we conclude that
\begin{multline}\label{e.energyint*}
\frac{d}{dt}\tilde{E}(t) + \int_\Omega \Big(\mu|\nabla u|^2 + (\lambda+\mu)(\div u)^2 + \sum_{j=1}^ND_j c_j|\nabla (\sigma_j'(c_j)+ z_j\Phi)|^2\Big)dx = 0,
\end{multline}
where,
\begin{equation}
\tilde{E}(t)=\int_\Omega\left( \rho\left(\frac{1}{2} |u|^2 + \frak{e}(\rho)\right) + \sum_{j=1}^N\sigma_j(c_j) + \frac{\kappa}{2}|\nabla\Phi|^2 +\kappa\nabla\Phi_0\cdot\nabla\Phi + \Phi_0\sum_{j=1}^Nz_jc_j \right)dx.
\end{equation}

Note that the function $\Phi_0$ is smooth and depends only on the given boundary data $V(x)$ (cf. \eqref{e.phi0} and \eqref{e.phi0data}). Moreover, the blocking boundary conditions \eqref{e1.11} imply that for all $t>0$ and $j=1,...,N$,
\begin{equation}
\int_\Omega c_j(t,x)dx=\int_\Omega c_j^0(x)dx.
\end{equation}
Putting these observation together with the fact that $c_j$ are nonnegative 
 we see that 
\begin{multline*}
\int_\Omega\left( \kappa\nabla\Phi_0\cdot\nabla\Phi+ \Phi_0\sum_{j=1}^Nz_jc_j \right)dx \\
\ge -\frac{\kappa}{4}\int_\Omega|\nabla\Phi|^2dx - \kappa\int_\Omega|\nabla\Phi_0 |^2 dx - N\|\Phi_0\|_{L^\infty(\Omega)}\max_j{|z_j|}\|c_j^0\|_{L^1(\Omega)}.
\end{multline*}

In conclusion, 
\begin{equation}
\tilde{E}(t)\ge \int_\Omega\left( \rho\left(\frac{1}{2} |u|^2 + \frak{e}(\rho)\right) + \sum_{j=1}^N\sigma_j(c_j) + \frac{\kappa}{4}|\nabla\Phi|^2 \right)dx - C_0,
\end{equation}
for some finite constant that depends only on the function $V(x)$ and on the initial ion masses, which means that \eqref{e.energyint} provides satisfactory {\it a priori} estimates upon integration in $t$.

\bigskip
	
\section*{Acknowledgments}
D.~Marroquin 
acknowledges the support from CNPq, through grant proc. 310021/2023-5 and by the Coordenação de Aperfeiçoamento de Pessoal de Nível Superior - Projeto CAPES - Print UFRJ - 2668/2018/88881.311616/2018-01.
 D. Wang was supported in part by National Science Foundation  grants  DMS-1907519 and DMS-2219384.

\bigskip

\end{document}